\newcommand{\ideal}[1]{{\left\langle#1\right\rangle}}
\newcommand{\excise}[1]{}
\newcommand{\p}{\partial}
\newcommand{\nn}{{\bf 1}}
\newtheorem{theorem}{Theorem}[section]
\newtheorem{lemma}[theorem]{Lemma}
\newtheorem{corollary}[theorem]{Corollary}
\theoremstyle{definition}
\newtheorem{remark}[theorem]{Remark}
\newtheorem{definition}[theorem]{Definition}
\newtheorem{assumption}[theorem]{Assumption}
\newcommand{\baseRing}[1]{\ensuremath{\mathbb{#1}}}
\newcommand{\Z}{\baseRing{Z}}
\newcommand{\C}{\baseRing{C}}
\newcommand{\N}{\baseRing{N}}
\newcommand{\R}{\baseRing{R}}
\newcommand{\too}{{\longrightarrow}}
\def\<{\langle}
\def\>{\rangle}
\def\0{\mathbf{0}}
\def\CC{{\mathbb C}}
\newcommand{\NN}{{\mathbb N}}
\def\PP{{\mathbb P}}
\def\QQ{{\mathbb Q}}
\def\RR{{\mathbb R}}
\def\ZZ{{\mathbb Z}}
\def\cA{{\mathcal A}}
\def\cC{{\mathcal C}}
\def\cD{{\mathcal D}}
\def\cE{{\mathcal E}}
\def\cO{{\mathcal O}}
\def\cP{{\mathcal P}}
\def\cS{{\mathcal S}}
\def\cU{{\mathcal U}}
\def\rank{{\rm rank\ }}
\def\supp{\text{\rm supp}}
\def\ol#1{{\overline {#1}}}
\def\wt#1{{\widetilde {#1}}}
\numberwithin{equation}{section}
\newcommand\restr[2]{{
  \left.\kern-\nulldelimiterspace
  #1
  \vphantom{\big|}
  \right|_{#2}
  }}
\date{}
\begin{document}

\title{Gevrey expansions of hypergeometric integrals II}\thanks{The three authors are partially supported by MTM2016-75024-P and FEDER. The first two authors are also partially supported by FQM333-Junta de Andaluc\'{\i}a}

\author[F.-J. Castro-Jim\'enez]{Francisco-Jes\'us Castro-Jim\'enez}
\address{Departamento de \'Algebra e Instituto de Matemáticas-IMUS, Universidad de Sevilla, Av. Reina Mercedes s/n 41012 Sevilla, Spain.} \email{castro@us.es}
\thanks{}

\author[M.-C. Fern\'andez-Fern\'andez]{Mar\'{\i}a-Cruz Fern\'andez-Fern\'andez}
\address{Departamento de \'Algebra, Universidad de Sevilla, Av. Reina Mercedes s/n
41012 Sevilla, Spain.} \email{mcferfer@us.es}
\thanks{}

\author[M. Granger]{Michel  Granger}
\address{Universit\'e d'Angers, D\'epartement de Math\'ematiques, LAREMA, CNRS
UMR n. 6093, 2 Bd. Lavoisier, 49045 Angers, France.}
\email{michel.granger@univ-angers.fr}
\thanks{}

\begin{abstract}
We study integral representations of the Gevrey series solutions of irregular hypergeometric
systems under certain assumptions. We prove that, for such systems, any Gevrey series solution, along a coordinate hyperplane of its singular support, is the asymptotic expansion of a holomorphic solution given by a carefully chosen integral representation.
\end{abstract}

\maketitle

\today

\section{Introduction.}\label{intro-0}
In \cite{GGZ87} the authors introduced the notion of general $A$-hypergeometric system generalizing Gauss hypergeometric equation and other classical differential equations.  In {\it loc. cit.} and in a series of papers (see \cite{GZK89}, \cite{GKZ90} and the references therein) the authors analyzed the solutions of such systems developing the theory of generalized hypergeometric functions.

General $A$-hypergeometric systems, also known as GKZ systems, are finitely generated $D$-modules where $D:=\CC[x]\langle \partial \rangle = \CC[x_1,\ldots,x_n]\langle \partial_1,\ldots,\partial_n\rangle$ stands for the complex $n$-th Weyl algebra.

The input data for a GKZ system is a pair $(A,\beta)$ where $\beta$ is a vector in $\CC^d$ and $A=(a_{k\ell})=(a{(1)},\ldots , a{(n)})\in (\Z^{d})^n$ is a $d\times n$ matrix whose $\ell^{\rm{th}}$ column is $a{(\ell)}$ and $\ZZ A:=\sum_{k=1}^d \ZZ a(k) =\ZZ^d$. The {\em toric ideal} $I_A \subset \C[\partial]:=\C[\partial_1,\ldots,\partial_n]$ is the ideal generated by the family of binomials $\partial^u -\partial^v$ where $u,v\in \N^n$ and $Au=Av$ (we assume $0\in \NN$). The polynomial ideal $I_A$ is prime and the Krull dimension of the quotient ring $\CC[\partial]/I_A$ equals $d$. Following \cite{GGZ87, GZK89}, the hypergeometric ideal associated with the pair $(A,\beta)$ is~: $$H_A(\beta):=DI_A + D(E_1-\beta_1,\ldots, E_d-\beta_d)$$ where $E_k= \sum_{\ell=1}^n a_{k\ell}x_\ell\partial_\ell$ is the $k^{\rm{th}}$ Euler operator associated with the $k^{\rm{th}}$ row of $A$. The
corresponding hypergeometric $D$--module (or $A$-hypergeometric system) is the
quotient left $D$--module $M_A(\beta):= \frac{D}{H_A(\beta)}$.

Hypergeometric systems $M_A(\beta)$ are holonomic $D$--modules on $X=\CC^n$, \cite{GZK89} and \cite[Thm.~3.9]{Adolphson}. Moreover, a characterization of the regularity of $M_A(\beta)$, in the sense of $D$--module theory \cite{Mebkhout-positivite}, \cite{Laurent-Mebkhout},  is provided in the series of papers \cite{Hotta}, \cite{SST} and \cite{schulze-walther}:
The holonomic $D$-module  $M_A(\beta)$ is {\em regular}  if and only if the toric ideal $I_A$ is homogeneous for the standard grading
in the polynomial ring $\CC[\partial]$. In particular the condition to be regular for $M_A(\beta)$ is independent of the parameter vector $\beta$.

The dimension of the space of germs of holomorphic solutions of $M_A(\beta)$ around a generic point in $X$ equals $d! {\rm Vol}(\Delta_A)$ if $\beta$ is generic (see \cite{GZK89}, \cite[Cor.~5.20]{Adolphson} and \cite{MMW}). Here $\Delta_A$ is the convex hull in $\R^d$ of the points ${\bf 0},a(1),\ldots, a(n)$, where ${\bf 0}\in \RR^d$ is the origin, and ${\rm Vol}(\Delta_A)$ is its Euclidean volume. These holomorphic solutions are represented as $\Gamma$--series in \cite{GZK89} (see also \cite{Ohara-Takayama} and \cite{F}) when $\beta$ is generic enough.

A. Adolphson considers in \cite[Sec.~2]{Adolphson} integral representations of solutions of $M_A(\beta)$ which involve exponentials of polynomial functions and appropriate integration cycles. In \cite{Esterov-Takeuchi-2015} A. Esterov and K. Takeuchi prove that the generic holomorphic solution spaces are in fact completely described by Adolphson's integral representations along rapid decay cycles as introduced by M. Hien in \cite{Hien07} and \cite{Hien09}. Such type of integrals are also used in \cite{M-H18b} and generalized in \cite{M-H19}, where they are called Laplace integrals.

The slopes, see \cite{Laurent-Mebkhout}, of $M_A(\beta)$ along coordinate subspaces are described in \cite{schulze-walther} and their corresponding irregularity sheaves and Gevrey series solutions, see \cite{Mebkhout-positivite}, are studied and described for generic parameters $\beta$ in \cite{F} (see also \cite{FC2,FC1}). Moreover, in \cite[Prop. 5.3 and Rmk. 5.4]{CFKT} these Gevrey series solutions of $M_A(\beta)$ are interpreted as asymptotic expansions of certain of its holomorphic solutions under some assumption on the Gevrey index of the series, via the so-called {\em modified A-hypergeometric systems} introduced in \cite{T09}.

In \cite{Castro-Granger-IMRN}, and when $A$ is a row matrix with positive integer entries,  the authors develop a link between Gevrey series solutions of $M_A(\beta)$ and holomorphic solutions in sectors following Adolphson's approach. They prove that any Gevrey series
solution, along the singular support of the system $M_A(\beta)$, is the asymptotic expansion of a holomorphic
solution given by a carefully chosen integral representation.

In this paper we further develop this link when the matrix $A=(a(1),\ldots,a(n))\in (\ZZ^{d})^n$ satisfies two conditions. Since the rank of $A$ is assumed to be $d$, we may also assume,  after a possible reordering of the columns, that the first $d$ columns of $A$ determine a $(d-1)$--simplex $\sigma$. We further assume that $A$ satisfies the following two conditions (see Assumption \ref{assumption}): (1) The points $a(d+1), \ldots, a(n-1)$ belong to the interior of the convex hull $\Delta_\sigma$ of $\sigma$ and the origin; and (2) The point $a(n)$ is not in $\Delta_\sigma$ and belongs to the open positive cone of $\sigma$. Figure 1 shows an example of an allowed column set configuration for a $2\times5$ matrix $A$, where $\Delta_{\sigma}$ is the triangle.

\setlength{\unitlength}{11mm}
$$\begin{picture}(-6,5)(10,3)
\put(5,4){\vector(0,1){4}}
\put(4,5){\vector(1,0){5}}
\put(7,5){\line(-1,2){1}}
\put(6,7){\line(-1,-2){1}}
\put(7,5){\makebox(0,0){$\bullet$}}
\put(6,7){\makebox(0,0){$\bullet$}}
\put(6,6){\makebox(0,0){$\bullet$}}
\put(5.5,5.5){\makebox(0,0){$\bullet$}}
\put(8,6){\makebox(0,0){$\bullet$}}
\put(7,5){\makebox(0.6,-0.5){$a(1)$}}
\put(6,7){\makebox(0.9,0){$a(2)$}}
\put(6,6){\makebox(0.4,-0.4){$a(3)$}}
\put(5.5,5.5){\makebox(0.6,-0.4){$a(4)$}}
\put(8,6){\makebox(0.6,-0.4){$a(5)$}}
\put(6,3.5){\makebox(0,0){Figure 1}}
\end{picture}$$

Under these two conditions we have that $Y=\{x_n=0\}$ is an irreducible component of the singular locus of $M_A (\beta)$ \cite[Sec.~3]{Adolphson}, there is only one slope of $M_A(\beta)$ along $Y$ \cite{schulze-walther} and, if $\beta$ is generic enough, the dimension of the space of Gevrey series solutions of $M_A(\beta)$ along $Y$ is $d! {\rm Vol}(\Delta_\sigma)$ \cite{F}.

We prove in Theorem \ref{Asymptotic-expansions-theorem} that for generic $\beta \in \CC^d$, the space of Gevrey series solutions of $M_A(\beta)$, along the hyperplane $Y$, has a basis given by asymptotic expansions of certain holomorphic solutions of $M_A(\beta)$ described by integral representations, as those considered by Adolphson in \cite[Sec.~2]{Adolphson}. These integrals are solutions of type $$I_C(\beta; x)= I_C(\beta; x_1,\ldots,x_n):=\int_C t^{-\beta-\nn }\exp\left(\sum_{\ell=1}^n x_\ell t^{a(\ell)}\right)dt$$ where $t=(t_1,\ldots,t_d)$, $dt=dt_1\cdots dt_d$ and $C$ runs over a carefully chosen and explicit finite set of cycles on the universal covering of $(\CC^*)^d$. Moreover, we also prove in Theorem \ref{conclusion-theorem} that these cycles, which are Borel--Moore cycles on the universal covering of $(\CC^*)^d$, can be replaced by a set of rapid decay homology cycles in the sense of \cite{Hien09}.

Here is a summary of the content of this paper. In Section \ref{intro} we consider a general
matrix $A$ as before but not necessarily satisfying previous conditions (1) and (2) (see Assumption \ref{assumption}). Following a construction
in \cite[Sec.~4.4]{GG99}, we describe cycles $C_{p,\delta}$ in the universal covering of
$(\CC^*)^d$, depending on a given point $x\in \CC^n$. We fix a maximal simplex $\sigma\subset\{1,\dots,n\}$, i.e. the set $\{a_k\,\vert \, k\in \sigma\}$ is a basis of $\RR^d$. Then this cycle depends only on $x_\sigma:=(x_k)_{k\in\sigma}$, and on vectors
$p\in \ZZ^\sigma$ and $\delta \in \RR^\sigma$ with components $\delta_k$ satisfying $|\delta_k|<1/2$.
In Subsection \ref{sufficient-cond-mod-growth} we give a sufficient condition for the argument of the integral
$I_{p,\delta}(\beta; x):=  I_{C_{p,\delta}}(\beta; x)$ to have moderate growth along
$C_{p,\delta}$, with a bounded exponential factor. This is a step towards sufficient conditions of convergence for $I_{p,\delta}(\beta; x)$ which are developed
in Section \ref{section-change-coordinates}.

In Section \ref{section-change-coordinates}, we perform the appropriate toric change of variables in the universal covering of $(\CC^*)^d$, like in \cite{GG99}, which reduces  the description of asymptotic expansions for the integrals $I_{p,\delta}(\beta; x)$ to the study of integrals of type $$ F_{p,\delta}(\beta; y):= \int_{D_{p,\delta}} t^{-\beta-\bf{1}}\exp\left(t_1+\cdots+t_d+\sum_{j=d+1}^ny_jt^{a(j)}\right)dt$$ where the cycle  $D_{p,\delta}$ is the image of $C_{p,\delta}$ under the change of variables. We notice that  after this reduction the  new integral looks like a particular case of $I_{p,\delta}(\beta; x)$, with the first $d\times d$ submatrix $(a(1),\ldots,a(d))=(e(1),\ldots,e(d))$ equal to the identity matrix. However, the matrix $A=(a(1),\ldots,a(n))$ is now allowed to have rational non integer coefficients.

The crucial point for convergence statements is a condition of rapid decay at infinity as written in the inequality \eqref{dominant-exponent}.
We prove that, under some conditions, the integral $F_{p,\delta}(\beta; y)$ is absolutely convergent when   $\Re\beta_k<0$ for $k\in \sigma$ and
$y \in (\CC^*)^{n-d}$; see Lemmata \ref{conv}, \ref{sufficientplus}.

In Section \ref{obtention-Gevrey-series} we show that these convergence statement can be applied in practise: under the
Assumption \ref{assumption-2}, and with a careful choice of the parameter $\delta$, depending on $p\in \ZZ^\sigma$ and  $x\in (\CC^*)^n$, we obtain an effective statement of convergence in Lemma \ref{remark-delta}.	

Section \ref{obtention-Gevrey-series} contains some of the main results of this paper. We assume in this section that the matrix $A$ defined in Section \ref{section-change-coordinates}, satisfies moreover conditions (1) and (2) (see Assumption \ref{assumption-2}), deduced from the condition (1) and (2) in Assumption \ref{assumption} already considered for the original matrix.

We fix $p\in \ZZ^d$ and $\delta\in \RR^d$ once for all and we omit these subindexes  in our formulas. As a step towards previously mentioned Theorem  \ref{Asymptotic-expansions-theorem}, we prove in Theorem \ref{ASY} that if $\Re \beta <0$, there is an asymptotic expansion with respect to the variable $y_n$ in some sector in $\CC^*$:
\begin{equation}\label{F-a-exp} F(\beta; y) \, \underset{y_n\to 0} {\sim} \, \sum_{m\in \NN}A(\beta; m, y')\frac{y_n^m}{m!}
\end{equation}
where $y'=(y_{d+1},\ldots, y_{n-1})$ and $$A(\beta; m , y'):=\int_{D_{p,\delta}}t^{-\beta-\nn+ma(n)}\exp\left(t_1+\cdots+t_d + \sum_{j=d+1}^{n-1} y_j t^{a(j)}\right) dt.$$
Assumption \ref{assumption-2} plays an essential role in the proof of this result. Without assumption (1), we would need to impose further conditions on the arguments of $y$, see Remark \ref{remark-first-condition}, in order to guarantee the convergence of $F(\beta; y)$. Without condition (2), the vertex $a(n)$ has negative components and the integrals defining the coefficients $A(\beta; m, y')$ already fail to be convergent for $m$ large enough.

Then we prove in Lemma \ref{analytic-continuation} that $F(\beta; y)$ admits a meromorphic continuation $\widetilde{F}(\beta; y)$, with respect to the variable $\beta$, with poles at most in a countable locally finite union of hyperplanes $\mathcal P$ in $\CC^d$. The proof of this lemma uses that the points $a(d+1),\ldots, a(n)$ belong to $\sum_{k=1}^d \RR_{> 0} a(k)=\RR^d_{>0}$, which follows from conditions (1) and (2). The set $\mathcal P$ is contained in the set of so-called \emph{resonant parameters} of $A$ \cite[2.9]{GKZ90}
and it is explicitly described in terms of the columns of $A$.
We also prove in Lemma \ref{asymptotic-expansion-tilde} that, for any fixed parameter $\beta \not\in {\mathcal P}$, the meromorphic continuation  $\widetilde{F}(\beta; y)$ admits an asymptotic expansion along $y_n=0$ and that the coefficients $\widetilde{A}(\beta; m, y')$ of this expansion are the analytic continuation of the previously introduced $A(\beta; m, y')$.

In Section \ref{Limit-of-integrals-rapid-decay} we prove that when $\Re \beta <0$ and $\beta$ is sufficiently general, the integrals $F(\beta;y)$ are in fact equal to integrals over rapid decay cycles in the sense of \cite{Hien09} (see Theorem \ref{limit-rapid-decay}). The statements involving Borel-Moore cycles are weaker because the analytic continuations are not expressed by integral along cycles when $\Re \beta_k>0$ for some $k$. The notion of rapid decay cycles is explained in Subsection \ref{Hiensufficient0}. Subsection \ref{the rd cycle} is devoted to the construction of rapid decay cycles. We start from a product of Hankel contours, along which the hypergeometric integrals are grossly divergent, but then we build a thinned towards infinity version of this product along which convergent integrals are obtained. These integrals in Section \ref{Limit-of-integrals-rapid-decay} are also defined when $\Re \beta_ k \geq 0$ for some $k$ and they are still solutions of $M_A(\beta)$. In Subsection \ref{the rd cycle} we prove, by using Section \ref{obtention-Gevrey-series}, that these integrals admit asymptotic expansions as Gevrey series solutions of $M_A(\beta)$ for non resonant $\beta$ in $\CC^d$.

Acknowledgements: We would like to thank K. Takeuchi and S.-J. Matsubara-Heo for their suggestions and useful comments about the content of this article. The first author would like to thank the D\'epartement de Math\'ematiques of the University of Angers (France) for its support during the first stage of this research. The third author would like to thank the Department of Algebra and the Institute of Mathematics of the University of Seville (IMUS) for their support and hospitality during the preparation of this paper.

\section{Products of lines for rapid decay.}\label{intro}
\subsection{Notations}

Let us slightly change our notation used in the introduction and let us start with a pair $(B,\gamma)$ where $B:=\left(b(1),\dots,b(n)\right)\in \left(\ZZ^d\right)^n$ is a $d\times n$ matrix, described as a list of columns such that $\ZZ B:=\ZZ b(1)+\cdots+ \ZZ b(n)=\ZZ^d$ and where  $\gamma$ is a parameter vector  in $\CC^d$. We are concerned with integrals:
$$I_C(\gamma; x)=I_C(\gamma; x_1,\ldots,x_n):=\int_C t^{-\gamma-\nn }\exp\left(\sum_{\ell=1}^n x_\ell t^{b(\ell)}\right)dt$$
where $\nn=(1,\ldots,1)\in \NN^d$ and $C$ is a suitable cycle. These integrals are formally solutions of the GKZ system $M_B (\gamma)$ associated with $(B,\gamma)$ (see e.g. \cite[Sec.~2]{Adolphson}).

To make precise this definition let us specify that we use, throughout the paper, the following conventions and notations.

First, $C$ is a cycle on the universal covering $(\wt{\CC^*})^d$ of $(\CC^*)^d$. We identify
$(\wt{\CC^*})^d$ with $\CC^d$ or with $\RR_{>0}^d\times \RR^d$ and write $z=(\log r+\sqrt{-1}\,\theta)$ or $(r,\theta)$ respectively, for the coordinates on $(\wt{\CC^*})^d$  with $\theta_k$ a determination of $\arg t_k$, $t_k=\exp(z_k)$ and $r_k=|t_k|$. We set, for any vector $v\in \CC^d$, $t^v=\prod_{k=1}^d t_k^{v_k}$.
This is a multivalued monomial, namely the function on the universal covering:
$$\exp{\ideal{z,v}}=\exp\left(\sum_{k=1}^d v_k(\log r_k+\sqrt{-1}\,\theta_k)\right)$$
where we set, given two vectors $u,v \in \CC^d$, $\ideal{u,v}=\sum_{k=1}^d u_k v_k$.

We are interested in cycles $C$ in $(\widetilde{\CC^*})^d$, such that the integrals $I_C(\gamma; x)$ are convergent and have asymptotic expansions along a fixed coordinate hyperplane. We want to find sufficiently many cycles $C$ so that these asymptotic expansions form a basis of the space of Gevrey solutions of $M_B (\gamma)$.
We achieve this goal only under some assumptions on $B$ and $\gamma$ (see Theorem \ref{Asymptotic-expansions-theorem}).

\subsection{Description of cycles of rapid decay at infinity}
If $\tau \subset  \{1,\ldots,n\}$, we denote by $B_\tau$ the matrix whose columns are $b(j)$ with $j\in \tau$ and by $\ol{\tau}$ the complement of $\tau$ in $\{1,\ldots,n\}$.

Recall that a subset $\sigma \subset \{1,\ldots,n\}$ is called a maximal simplex for $B$ if the columns $\{b(k), k\in \sigma\}$ form a basis of $\RR^d$. Such a maximal simplex $\sigma$ is also called a base in \cite[Sec.~1.1]{GZK89}.
We often identify the set $\sigma$ with the set of columns $\{b(k), k\in \sigma\}$.

We fix a maximal simplex $\sigma$ for $B$ and take $x\in\CC^n$ such that $x_k\neq 0$ for all $k\in\sigma$. We also fix $p=(p_k )_{k\in \sigma} \in \ZZ^{\sigma}\simeq \ZZ^d$, $\delta =(\delta_k)_{k\in\sigma} \in \RR^{\sigma}\simeq \RR^d$
such that $|\delta_k |<\frac{1}{2}$ for all $k\in \sigma$. We denote by $C_{p,\delta}$ the cycle in the space $(\widetilde{\CC^*})^d$ described by the following condition on the argument $\theta:=\arg t$ of $t\in (\CC^*)^d$ (i.e.\,
$^t\theta: = (\arg t_1,\ldots,\arg t_d)$):
\begin{equation}\label{cycleCp-delta}
\arg(x_k t^{b(k)})=\arg x_k+\ideal{b(k),\theta}=(1+\delta_k+2p_k)\pi \quad \text{ for all } k\in \sigma.
\end{equation} Notice that $C_{p,\delta}$ depends also on $x_{\sigma}:=(x_k )_{k\in \sigma} \in (\CC^*)^\sigma\simeq (\CC^*)^d$.

From now on we will denote $I_{p,\delta}(\gamma ; x)= I_{C_{p,\delta}}(\gamma ; x)$. The cycles $C_{p,\delta}$ are a slightly  modified version of cycles considered in \cite[Sec.~4.4]{GG99}.

Let us set  $\Theta := \left]\frac{\pi}2,\frac{3\pi}2\right[+2\pi \ZZ$. The equality (\ref{cycleCp-delta}) can be globally rewritten using matrix notation:
\begin{equation*}
\arg x_{\sigma}+~
^tB_\sigma \theta =(\nn+\delta+2p)\pi \in \Theta^{\sigma}.
\end{equation*}
There is a unique solution $\theta$ of the previous equation
\begin{equation}\label{arg}
\theta= (~^tB_\sigma)^{-1}\left(-\arg x_\sigma + (\nn+\delta+2p)\pi  \right)
\end{equation}
so that $C_{p,\delta}$ is the cartesian product of $d$ open half--lines.

Given $p,p'\in \ZZ^d$, let $\theta=\arg t$, $\theta'=\arg t'$ be the corresponding unique solutions for equation (\ref{arg}).

If $(~^tB_\sigma)^{-1}(p-p') \in \ZZ^d$ then $\theta - \theta' \in 2\pi \ZZ^d$ and the projections of the two cycles $C_{p,\delta}$ and  $C_{p',\delta}$ on $(\CC^*)^d$ are the same.
We check that the convergence of the two integrals along the cycles $C_{p,\delta}$ and  $C_{p',\delta}$  are then equivalent to each other and, moreover, the integral solutions differ only by a constant factor:
\[
I_{p',\delta}(\gamma; x) = \int_{C_{p',\delta}}t^{-\gamma-\nn}\exp\left(\sum_{\ell=1}^n x_\ell t^{b(\ell)}\right)dt = e^{-2\pi \sqrt{-1} (\sum_{k=1}^d m_k\gamma_k)} I_{p,\delta}(\gamma ; x)
\] for some $m_k\in \ZZ$, $k=1,\ldots,d$.

When $p$ varies in a set of representatives of $\frac{\ZZ^d}{\ZZ~^tB_\sigma}$, we will see that the convergence of
the integral $I_{p,\delta}(\gamma; x)$ depends on $\delta$ (see Remark \ref{remark-arg} and Lemma \ref{remark-delta}).
However, choosing in each such class an appropriate $\delta $, we can find, as a consequence of our main result and under some conditions (see Assumption \ref{assumption}),
$[\ZZ^d:\ZZ~^tB_\sigma]=\vert\det B_\sigma\vert$  many integral solutions $I_{p,\delta}(\gamma; x)$ which
are linearly independent (see Theorem \ref{Asymptotic-expansions-theorem}).

We will see in the proof of Lemma \ref{sufficientplus}, after the change of variables defined in Section \ref{section-change-coordinates},  that the cycles $C_{p,\delta}$ are of rapid decay at infinity.

\subsection{Sufficient conditions for moderate growth}\label{sufficient-cond-mod-growth} Sufficient conditions for the convergence of the integral  $I_{p,\delta}(\gamma; x)$ are detailed in next Section (see Lemma \ref{sufficientplus} and Remark \ref{sufficientdoubleplus}). As a preliminary step let us look here at a condition for bounding the exponential term in that integral:

Let us notice that condition (\ref{cycleCp-delta}) implies that $\Re x_kt^{b(k)}<0$ along  $C_{p,\delta}$ for any $x_k\in \CC^*$, $k\in \sigma$. If we assume the analogous condition
\begin{equation}\label{argxj}
\arg(x_jt^{b(j)})\in \Theta \quad \text{ for all } j\in\ol{\sigma}
\end{equation} then the argument of the exponential has real negative part everywhere along $C_{p,\delta}$, hence the absolute value of the exponential term in the integral $I_{p,\delta}(\gamma;x)$ is bounded by 1.
Conditions (\ref{argxj}) can be globally rewritten:
\begin{equation}\label{arg2}
\arg x_{\ol{\sigma}}+~^tB_{\ol{\sigma}}\, \theta \in \Theta^{\ol{\sigma}}.
\end{equation}
Finally we have, if we take into account the term $t^{-\gamma-{\bf 1}}$:
\begin{remark} Under the conditions (\ref{arg}) and (\ref{arg2}), the argument of the integral $I_{p,\delta}(\gamma; x)$ has moderate growth along $C_{p,\delta}$.
\end{remark}
We will see in the next section that conditions (\ref{arg}) and (\ref{arg2}) are sufficient  convergence conditions for the integrals $I_{p,\delta}(\gamma; x)$ when combined with a condition on the parameter $\gamma$. After an appropriate change of variables we can interpret them as a condition of rapid decay at infinity, see the proof of Lemma \ref{sufficientplus}.

\begin{remark}\label{remark-arg}
Let us notice that condition \eqref{arg} determines a unique cycle $C_{p,\delta}$. It is not clear that for given $x\in (\CC^*)^\sigma\times \CC^{\ol\sigma}$ and $p\in \ZZ^\sigma$
one can always choose $\delta\in \RR^\sigma$ for this cycle to satisfy as many conditions as in expression \eqref{arg2}. It is therefore interesting to weaken these conditions, by eliminating non significant ones, as we shall do in Lemma  \ref{sufficientplus} and Remark \ref{sufficientdoubleplus}.\end{remark}

We notice that $C_{p,\delta}$ is a Borel--Moore cycle in $(\wt{\CC^*})^d$
but not in general a rapid decay cycle in the sense of \cite{Hien07}, see Remark \ref{no-rd-at-0}.

However we shall prove in Section \ref{Limit-of-integrals-rapid-decay} that the integral along $C_{p,\delta}$ is equal to an integral along a rapid decay cycle (see Theorem \ref{limit-rapid-decay}) under Assumption \ref{assumption-2}, and for values of $\gamma$ which guarantee convergence. Our result can be then interpreted in the frame of \cite[Th.~4.5]{Esterov-Takeuchi-2015}.

\section{A change of variables and explicit calculations.}\label{section-change-coordinates}

We will assume for simplicity, after a possible reordering of the variables, that the simplex $\sigma$ is $\{1,\ldots,d\}$. Let us fix $x\in (\CC^*)^d\times\CC^{n-d}$.
We make the following toric change of variables, which is well defined and $\CC$-linear on the universal covering $(\wt{\CC^*})^d\simeq \CC^d$:
 \[
s_k=x_{k}t^{b(k)} \text{ for } k \in \sigma.
 \]
Equivalently, we have $$t_k = \left(\frac{s}{x_\sigma}\right)^{B_\sigma^{-1}e(k)} \text{ for } k \in \sigma$$ where
$\left(\frac{s}{x_\sigma}\right)$ is the vector with coordinates $s_k/x_k$  and $e(k)$ is the $k$--th standard basis vector in $\ZZ^d$, for $k\in \sigma$.

Let us compute the jacobian matrix and determinant of this change of variables.
\[
\frac{\p t_k}{\p s_j}=\left(B_\sigma^{-1} e(k)\right)_j\, s_j^{-1}\left(\frac{s}{x_\sigma}\right)^{B_\sigma^{-1}e(k)}.
\]
Since $\left(B_\sigma^{-1} e(k)\right)_j=\left(B_\sigma^{-1}\right)_{j,k}$, we rewrite this:
\[
J:=\left(\frac{\p t_k}{\p s_j}\right)_{1\leq k,j\leq d}=\left(\begin{array}{ccc}
\left(\frac{s}{x_\sigma}\right)^{B_\sigma^{-1}e(1)}&  & 0 \\
 &\ddots & \\
0& & \left(\frac{s}{x_\sigma}\right)^{B_\sigma^{-1}e(d)}
\end{array}\right)
\left(~^tB_\sigma\right)^{-1}
\left(\begin{array}{ccc}
\frac1{s_1}&  & 0 \\
 &\ddots & \\
0& & \frac1{s_d}
\end{array}\right).
\]
The integral is transformed as follows:
\begin{align*}
 I_{p,\delta}(\gamma ;x)=&\int_{C_{p,\delta}} t^{-\gamma-\nn}\exp\left(\sum x_\ell t^{b(\ell)}\right)dt = \\
= &\int_{D_{p,\delta}}\det(J) \left(\frac{s}{x_\sigma}\right)^{-B_\sigma^{-1}(\gamma+\bf{1})}\exp\left(\sum_{k\in \sigma} s_k + \sum_{j\notin\sigma} x_jx_\sigma^{-B_\sigma^{-1}b(j)}s^{B_\sigma^{-1}b(j)}\right) ds.
\end{align*}
Since we have $\det(J)=\left(\frac{s}{x_\sigma}\right)^{B_\sigma^{-1} \bf{1}}\det(B_\sigma^{-1})\, s^{-\bf{1}}$, the final result is:
\[
I_{p,\delta}(\gamma; x)=\det(B_\sigma^{-1}){x_\sigma}^{B_\sigma^{-1}\gamma}\int_{D_{p,\delta}} s^{-B_\sigma^{-1}\gamma - \nn} \exp\left(\sum_{k\in\sigma} s_k + \sum_{j\notin\sigma} x_jx_\sigma^{-B_\sigma^{-1}b(j)}s^{B_\sigma^{-1}b(j)}\right)ds.
\]
The cycle $D_{p,\delta}$  is the image of the cycle $C_{p,\delta}$ described in Section \ref{intro}, and it is determined by the conditions deduced from equality (\ref{cycleCp-delta}):
\begin{equation}\label{Dpdelta}
\arg s_k =(1+\delta_k+2 p_k)\pi \quad \mbox{ for all } k\in \sigma.
\end{equation}
\begin{remark}\label{finite_covering}
Let us notice that the argument of the exponential term in previous integral is already defined as a univalent polynomial function on a finite covering of $(\CC^*)^d$, isomorphic to $(\CC^*)^d$. More precisely we ramify $q_k$ times the factor $\CC^*$ of the  variable $s_k$ with $q_k$ the
lowest common denominator of the coefficients in the $k$--th row of the matrix $B_\sigma^{-1}B$.
This will be used in Section \ref{Limit-of-integrals-rapid-decay} and especially in Remark \ref{Hiencycle}.
\end{remark}
\begin{lemma}\label{conv} Sufficient conditions for the absolute convergence of $I_{p,\delta}(\gamma; x)$ for $x\in(\CC^*)^n$ are:
\[
\Re B_\sigma^{-1}\gamma<0 \,\, \text{ and } \,\, \Re (x_jx_\sigma^{-B_\sigma^{-1}b(j)}s^{B_\sigma^{-1}b(j)}) < 0\quad \forall j\in \ol{\sigma}, \, \, \forall s\in D_{p,\delta}.
\]
\end{lemma}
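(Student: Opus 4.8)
The plan is to establish convergence directly from the formula for $I_{p,\delta}(\gamma;x)$ after the change of variables, by estimating the absolute value of the integrand along the product of half-lines $D_{p,\delta}$. Recall that $D_{p,\delta}$ is, by \eqref{Dpdelta}, the cartesian product of $d$ rays $\{s_k = \rho_k e^{\sqrt{-1}(1+\delta_k+2p_k)\pi} : \rho_k > 0\}$, so the integral is an iterated integral in the radial variables $\rho = (\rho_1,\ldots,\rho_d) \in \RR_{>0}^d$. Writing the integrand in these coordinates, the factor $s^{-B_\sigma^{-1}\gamma - \nn}$ contributes $\prod_k \rho_k^{-(\Re B_\sigma^{-1}\gamma)_k - 1}$ times a bounded phase factor, and the exponential factor $\exp(\sum_{k\in\sigma} s_k + \sum_{j\notin\sigma} x_j x_\sigma^{-B_\sigma^{-1}b(j)} s^{B_\sigma^{-1}b(j)})$ has modulus $\exp(\sum_k \Re s_k + \sum_j \Re(x_j x_\sigma^{-B_\sigma^{-1}b(j)} s^{B_\sigma^{-1}b(j)}))$.

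First I would use condition \eqref{cycleCp-delta}, i.e.\ $|\delta_k| < 1/2$, to observe that $\arg s_k = (1+\delta_k+2p_k)\pi$ forces $\Re s_k = \rho_k \cos((1+\delta_k)\pi) = -\rho_k\cos(\delta_k \pi) < 0$, and in fact $\Re s_k \le -c\,\rho_k$ for a constant $c = \min_k \cos(\delta_k\pi) > 0$; so the term $\exp(\sum_k \Re s_k)$ provides genuine exponential decay as any $\rho_k \to +\infty$. The second hypothesis of the lemma, $\Re(x_j x_\sigma^{-B_\sigma^{-1}b(j)} s^{B_\sigma^{-1}b(j)}) < 0$ for all $s \in D_{p,\delta}$, guarantees that the remaining terms in the exponent do not spoil this: each such term is $\le 0$ along the whole cycle, so the full exponential factor is bounded by $\exp(-c\sum_k \rho_k)$. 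Hence the modulus of the integrand is bounded by $\mathrm{const}\cdot e^{-c\sum_k \rho_k}\prod_k \rho_k^{-(\Re B_\sigma^{-1}\gamma)_k - 1}$, a function of $\rho$ whose integral over $\RR_{>0}^d$ is $\mathrm{const}\cdot\prod_k \Gamma(-(\Re B_\sigma^{-1}\gamma)_k)\,c^{(\Re B_\sigma^{-1}\gamma)_k}$.

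The point where one must be careful — and where the first hypothesis $\Re B_\sigma^{-1}\gamma < 0$ enters — is the integrability near $\rho_k = 0$: the gamma integral $\int_0^\infty \rho^{a-1} e^{-c\rho}\,d\rho$ converges at $0$ precisely when $a = -(\Re B_\sigma^{-1}\gamma)_k > 0$, which is exactly the stated sign condition. Near the origin the exponential factor is bounded (it tends to $1$ coordinatewise, by Remark \ref{finite_covering} the exponent extends continuously there), so the only possible singularity is the monomial $\prod_k \rho_k^{a_k - 1}$, and Fubini then reduces absolute convergence of the $d$-fold integral to the product of $d$ one-dimensional gamma integrals, each of which converges under the two displayed conditions. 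I expect the only mildly delicate point to be the bookkeeping that the phase factors arising from $s^{-B_\sigma^{-1}\gamma}$ and from $x_\sigma^{B_\sigma^{-1}\gamma}$ have constant modulus along $D_{p,\delta}$ (so they can be pulled out of the estimate), together with checking that "$\Re(x_j x_\sigma^{-B_\sigma^{-1}b(j)} s^{B_\sigma^{-1}b(j)}) < 0$ for all $s\in D_{p,\delta}$" is a condition that is stable as $\rho$ ranges over $\RR_{>0}^d$ — which it is, since scaling $s \mapsto \lambda s$ with $\lambda > 0$ along the cycle multiplies $s^{B_\sigma^{-1}b(j)}$ by a positive real power of $\lambda$ and hence preserves the sign of the real part; so it suffices to check the inequality, say, on the slice $\rho = \nn$.
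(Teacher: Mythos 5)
Your argument is correct and is essentially the paper's own proof: parametrize $D_{p,\delta}$ by the radii, use $|\delta_k|<\tfrac12$ to get $\Re s_k\le -c\,|s_k|$ with $c=\min_k\cos(\pi\delta_k)>0$, drop the remaining exponential terms by the second hypothesis, and dominate the integral by $\int_{\RR_{>0}^d}\rho^{\alpha}e^{-c\sum_k\rho_k}d\rho$, which converges exactly because $\Re B_\sigma^{-1}\gamma<0$. (Your parenthetical near the origin is unnecessary — and the appeal to Remark \ref{finite_covering} slightly off, since boundedness there already follows from the sign hypotheses — but this does not affect the proof.)
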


\begin{proof}
The second condition is a direct translation of (\ref{argxj}). It is sensitive to the choice modulo $2\pi\ZZ$ of $\arg s_k$, for $k\in \sigma$,  since the matrix $B_\sigma^{-1}b(j)$ may have coefficients in $\QQ\setminus \ZZ$.

We have $s_k=-|s_k|e^{\sqrt{-1}\,\pi \delta_k}$ and $(|s_1|,\dots,|s_d|)\in \RR_{>0}^d$ parametrizes the cycle. Since all the terms in the argument of the exponential term in the integral have real negative part, we have
\[
\Re\left(\sum_{k\in \sigma} s_k + \sum_{j\notin\sigma} x_jx_\sigma^{-B_\sigma^{-1}b(j)}s^{B_\sigma^{-1}b(j)}\right)\leq \Re\left(\sum_{k\in \sigma} s_k\right)\leq -c\left(\sum_{k\in \sigma} |s_k|\right)
\]
with $c:=\min_{k}\cos(\pi\delta_k)>0$.

Therefore the integral $I_{p,\delta}(\gamma; x)$ is dominated by the following convergent integral with $\alpha_k=\Re(-(B_\sigma^{-1}\gamma)_k)-1>-1$\,:
\[
\int_{\RR_{>0}^d} r^{\alpha}\exp\left(-c\sum_{k\in \sigma} r_k\right)dr= c^{-|\alpha|-d} \Gamma (\alpha +{\bf 1}).
\] where $r=(r_1,\ldots,r_d)$, $|\alpha|=\alpha_{1}+\cdots +\alpha_d$ and $\Gamma (\alpha + {\bf 1})= \prod_{k=1}^d \Gamma ( \alpha_k + 1).$
\end{proof}

We now define a reduction of the integral $I_{p,\delta} (\gamma ; x)$, which contains all the essential information. We put aside the initial monomial ${x_\sigma}^{B_\sigma^{-1}\gamma}$, and the constant $\det(B_\sigma^{-1})$, and the remaining integral can be expressed via a function of $n-d$ variables $y$ indexed by $\ol{\sigma}$:
\begin{equation}\label{reducedI-0}
G_{p,\delta}(\gamma; y)=G_{p,\delta}(\gamma; y_{d+1},\ldots,y_n) := \int_{D_{p,\delta}} s^{-B_\sigma^{-1}\gamma-\bf{1}} \exp\left(\sum_{k\in \sigma} s_k + \sum_{j\notin\sigma} y_js^{B_\sigma^{-1}b(j)}\right) ds.
\end{equation}

The formula relating $I_{p,\delta}(\gamma , x)$ to previous integral is:

\begin{equation}\label{conv-I-to-F}  I_{p,\delta}(\gamma, x)=\det(B_\sigma^{-1}){x_\sigma}^{B_\sigma^{-1}\gamma}G_{p,\delta}(\gamma;y)\end{equation}
where $y_j= x_jx_\sigma^{-B_\sigma^{-1}b(j)}$ for $j\in \ol{\sigma}.$

\subsection{Reduced version of hypergeometric integrals}\label{reduced-version-F}
In order to simplify subsequent calculations we shall use a more handy version of the integral $G_{p,\delta}(\gamma; y)$ by renaming the exponents.

We consider $\beta\in \CC^d$ and a $d\times n$ matrix $A=(a(1),\dots,a(n))$ with rational coefficients and with $(a(1),\dots,a(d))=(e(1),\dots,e(d))$ the unit matrix. We define
\begin{equation}\label{reducedI}
F_{p,\delta}(\beta; y):= \int_{D_{p,\delta}} t^{-\beta-\bf{1}}\exp\left(t_1+\cdots+t_d+\sum_{j=d+1}^ny_jt^{a(j)}\right)dt
\end{equation}
and we recover $G_{p,\delta}$ by setting $A=B_{\sigma}^{-1}B$ and $\beta =B_{\sigma}^{-1}\gamma$ in $F_{p,\delta}$.
We also replace the variable $s$ by the variable $t$ of the beginning of Section \ref{intro}.

Now we transpose to $F_{p,\delta}(\beta; y)$ the two sufficient conditions in Lemma \ref{conv}. The first one simply becomes $\Re\beta_k<0$, for all $k\in \sigma$,
and we shall assume it until the end of this section. Then we focus on the second condition in Lemma \ref{conv}. This condition transposed to $F_{p,\delta}(\beta; y)$ is:
\begin{equation}\label{sufficient}
\arg(y_j t^{a(j)})= \arg y_{j}+ \ideal{{\bf{1}}+\delta + 2p,\, a{(j)}}\pi \in  \Theta \quad \text{ for } j\in \ol{\sigma}.
\end{equation}
We shall prove in the next two lemmas, that a part of conditions $(\ref{sufficient})_\ell$ is already sufficient to guarantee the convergence of the integral $F_{p,\delta}(\beta; y)$.

We set $y_k=1$ for $k=1,\dots,d$. Notice that condition $(\ref{sufficient})_k$ is satisfied for all $k\in \sigma$ because $t_k = y_k t^{a(k)}$  and $|\delta_k|<1/2$. So, condition (\ref{sufficient}) is equivalent to
\begin{equation}\label{sufficient-bis}
\arg(y_\ell t^{a(\ell)})= \arg y_{\ell}+ \ideal{{\bf{1}}+\delta + 2p,\, a{(\ell)}}\pi \in  \Theta \quad \text{ for } \ell \in \{1,\ldots,n\}.
\end{equation}

Notice that in condition $\eqref{sufficient-bis}_\ell$ we implicitly assume that $y_\ell\neq 0$.

Recall that $\Delta_A$ denotes the convex hull of $\{{\bf{0}}, a(1),\ldots,a(n)\}$ in $\RR^d$. Let us denote by $\p\Delta_A$ the union of facets of  $\Delta_A$ not containing the origin and by $\tau_A$ the set of indices $\ell\in\{1,\ldots,n\}$ such that $a(\ell)\in \p\Delta_A$. We denote by $\eta\subset\tau_A$ the set of indices for the vertices of $\Delta_A$ different from the origin.
Recall also that we have set $\sigma = \{1,\ldots,d\}$. Notice that the intersection $\eta \cap \sigma$ could be non empty. Finally, let $M(t,y)$ denote the argument in the exponential term in the integral (\ref{reducedI}).

\begin{lemma}\label{sufficientplus}
The set of conditions $(\ref{sufficient-bis})_\ell$
for $\ell$ in $\tau_A$ is sufficient for the integral $F_{p,\delta}(\beta; y)$ to be absolutely convergent, when $\Re\beta<0$.
\end{lemma}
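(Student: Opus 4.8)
The plan is to bound the modulus of the integrand of $F_{p,\delta}(\beta; y)$ by a product of a polynomial factor in the $r_k=|t_k|$ coming from $t^{-\beta-\nn}$ and the exponential $\exp(\Re M(t,y))$, and to show that under the conditions $(\ref{sufficient-bis})_\ell$ for $\ell\in\tau_A$ the quantity $\Re M(t,y)$ decays fast enough to dominate the polynomial factor. First I would write $t_k=-|t_k|e^{\sqrt{-1}\pi\delta_k}$, so that on $D_{p,\delta}$ the cycle is parametrized by $r=(r_1,\dots,r_d)\in\RR_{>0}^d$; then $\Re\big(\sum_{k\in\sigma}t_k\big)\le -c\sum_{k}r_k$ with $c=\min_k\cos(\pi\delta_k)>0$ exactly as in Lemma \ref{conv}. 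For $j\in\ol\sigma$, the term $y_j t^{a(j)}$ has modulus $|y_j|\,r^{a(j)}$ (where $r^{a(j)}=\prod_k r_k^{a(j)_k}$), and when the argument condition $(\ref{sufficient-bis})_j$ holds for $j\in\tau_A$ its real part is $\le -c_j\,|y_j| r^{a(j)}\le 0$ for some $c_j>0$; for $j\notin\tau_A$, where we do not assume the argument condition, the point $a(j)$ lies in the interior of $\Delta_A$ or strictly below $\p\Delta_A$, and I would use this to absorb those terms.

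The key geometric step is the following: write $M^+(t,y)=\Re\big(\sum_{k\in\sigma}t_k+\sum_{j\in\tau_A\minus\sigma}y_j t^{a(j)}\big)$, which by the above is $\le -c\sum_{k\in\sigma}r_k-\sum_{j\in\tau_A\minus\sigma}c_j|y_j|r^{a(j)}$. Because $\eta\subset\tau_A$ indexes precisely the vertices of $\Delta_A$ other than the origin, and $\Delta_A=\conv\{\0\}\cup\{a(\ell):\ell\in\eta\}$, every other exponent $a(\ell)$ with $\ell\notin\tau_A$ is a convex combination $a(\ell)=\sum_{k\in\eta}\lambda_k a(k)$ with $\lambda_k\ge0$, $\sum_k\lambda_k\le 1$ (the deficit going to the vertex $\0$). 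Using the weighted AM–GM inequality on the monomials $r^{a(k)}$ (with $r^{a(k)}=r_k$ for $k\in\sigma$), one gets $r^{a(\ell)}\le \sum_{k\in\eta}\lambda_k r^{a(k)} + (1-\sum\lambda_k)$, so for any fixed $y$ the "bad" terms $|y_\ell| r^{a(\ell)}$, $\ell\notin\tau_A$, are bounded by a constant times $\big(\sum_{k\in\eta}r^{a(k)}+1\big)$, which is itself dominated by $\sum_{k\in\sigma}r_k$ plus the $\tau_A$-monomials plus a constant. Hence, after shrinking the constants, $\Re M(t,y)\le -\tfrac{c}{2}\sum_{k\in\sigma}r_k - \tfrac12\sum_{j\in\tau_A\minus\sigma}c_j|y_j|r^{a(j)} + C(y)$ for a constant $C(y)$. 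In particular $\Re M(t,y)\le -\tfrac{c}{2}\sum_{k\in\sigma}r_k+C(y)$, which already beats the exponential growth of $r^{\alpha}$ in each direction $r_k\to\infty$.

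Finally, I would conclude convergence: the integrand is bounded in modulus by $r^{\alpha}\exp(\Re M(t,y))\le e^{C(y)}\,r^{\alpha}\exp(-\tfrac c2\sum_{k\in\sigma}r_k)$ with $\alpha_k=\Re(-\beta_k)-1>-1$ since $\Re\beta<0$, and $\int_{\RR_{>0}^d}r^{\alpha}\exp(-\tfrac c2\sum_k r_k)\,dr=(c/2)^{-|\alpha|-d}\Gamma(\alpha+\nn)<\infty$, exactly as in the proof of Lemma \ref{conv}. Thus $F_{p,\delta}(\beta; y)$ is absolutely convergent. I expect the main obstacle to be the second paragraph: making the AM–GM/convexity bound on the interior exponents $a(\ell)$, $\ell\notin\tau_A$, completely rigorous — in particular handling the case $\eta\cap\sigma\ne\nothing$ correctly, tracking that the deficit $(1-\sum\lambda_k)$ really does contribute only a constant (this is where the origin being a vertex of $\Delta_A$ is used), and checking that the resulting bound on the bad monomials is genuinely absorbed by the negative terms rather than merely by the full sum $\sum_{k\in\sigma}r_k$ when some $|y_j|$ are small. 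Everything else is the routine Gamma-integral domination already carried out in Lemma \ref{conv}.
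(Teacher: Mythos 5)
There is a genuine gap at the step ``Hence, after shrinking the constants, $\Re M(t,y)\le -\tfrac{c}{2}\sum_{k\in\sigma}r_k-\tfrac12\sum_{j\in\tau_A\minus\sigma}c_j|y_j|r^{a(j)}+C(y)$.'' Your weighted AM--GM bound $r^{a(\ell)}\le\sum_{k\in\eta}\lambda_k r^{a(k)}+(1-\sum_k\lambda_k)$ is correct, but it turns each ``bad'' term $|y_\ell|r^{a(\ell)}$, $\ell\notin\tau_A\cup\sigma$, into a \emph{linear} combination of the vertex monomials with coefficients proportional to $|y_\ell|$. Absorbing these into the negative terms $-c\sum_{k\in\sigma}r_k-\sum_{j}c_j|y_j|r^{a(j)}$ then requires inequalities of the form $\sum_{\ell\notin\tau_A}|y_\ell|\lambda_{\ell k}\le\tfrac12 c_k|y_k|$ (for $k\in\eta\minus\sigma$) and $\le\tfrac{c}{2}$ (for $k\in\eta\cap\sigma$), i.e.\ a genuine smallness restriction on the moduli of the interior coefficients $y_\ell$ relative to $|y_k|$, $k\in\eta$, and to $\min_k\cos(\pi\delta_k)$. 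The lemma, however, asserts absolute convergence for \emph{every} $y\in(\CC^*)^n$ satisfying only the argument conditions $\eqref{sufficient-bis}_\ell$, $\ell\in\tau_A$; nothing constrains the moduli, so one cannot ``shrink the constants.'' You flag this worry yourself at the end, but the inequality you rely on is exactly the unproved (and in general false) point; what your argument actually proves is the weaker statement of Remark \ref{sufficientdoubleplus}, where such a smallness condition ($\sum_j K_j<\cos\vartheta$) is explicitly imposed and only an open subset of $y$'s is covered.

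The missing idea is to exploit the \emph{strict} inequality in the convexity data rather than linearize it. Since $a(j)\notin\p\Delta_A$ for $j\notin\tau_A\cup\sigma$, one can write $a(j)=\sum_{\ell\in\eta}\nu_{j\ell}a(\ell)$ with $\nu_{j\ell}\ge0$ and $\kappa_j:=\sum_\ell\nu_{j\ell}<1$ strictly. Keeping the monomial form, $|y_j t^{a(j)}|=K_j\prod_{\ell\in\eta}|y_\ell t^{a(\ell)}|^{\nu_{j\ell}}\le K_j\,\xi^{\kappa_j}$ with $\xi:=\sum_{\ell\in\tau_A\cup\sigma}|y_\ell t^{a(\ell)}|$ and $K_j=\bigl|y_j/\prod_{\ell\in\eta}y_\ell^{\nu_{j\ell}}\bigr|$. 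This bound is \emph{sublinear} in $\xi$, so $\Re M(t,y)\le-\xi\cos\vartheta+K\max(\xi^{\kappa},1)$ with $\kappa<1$ tends to $-\infty$ as $\xi\to+\infty$ no matter how large the constants $K_j$ are; since $\sigma\subseteq\tau_A\cup\sigma$ gives $\xi\ge r_1+\cdots+r_d$, one gets $|\exp(M(t,y))|\le\exp(C-c(r_1+\cdots+r_d))$ on $D_{p,\delta}$, and your final Gamma-integral domination (which is fine, and identical to Lemma \ref{conv}) then yields absolute convergence for all admissible $y$, as the paper does.
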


\begin{proof}
Since $|\exp(M(t,y))|=\exp(\Re M(t,y))$ we will provide first a bound of $\Re M(t,y)$.
We set $\tau:= \tau_A \cup \sigma$. We can write, for $j \not\in \tau$,
\begin{equation}\label{a(j)}
a(j)=\sum_{\ell \in\eta} \nu_{j\ell}a(\ell)
\end{equation}
with  $0\leq\sum_{\ell\in\eta} \nu_{j\ell}<1$  { and } $\nu_{j\ell}\geq 0$ for all $\ell\in \eta$ and $j\notin \tau$.

We set
\[
\xi_\ell:=|y_{\ell}t^{a(\ell)}|\, {\text{ for }}\, \ell \in \tau.
\]
By the condition $(\ref{sufficient-bis})_\ell$ for  $\ell \in \tau$ there exists $\vartheta\in]0,\frac{\pi}2[$ such that for all $\ell \in \tau $ one has
$$
\Re (y_{\ell}t^{a(\ell)}) \leq -\xi_\ell\cos\vartheta.
$$

Recall that
\begin{align*}
M(t,y) = &\sum_{\ell \in \tau}  y_{\ell}t^{a(\ell)}+\sum_{j\notin\tau}y_jt^{a(j)}=\sum_{\ell \in \tau}  y_{\ell}t^{a(\ell)}+\sum_{j\notin\tau}y_j\prod _{\ell \in \eta} \left(t^{a(\ell)}\right)^{\nu_{j\ell}} = \\
=& \sum_{\ell\in \tau} y_{\ell}t^{a(\ell)}+\sum_{j\notin\tau}\frac{y_j}{\prod_{\ell \in \eta} y_{\ell}^{\nu_{j\ell}}}\prod _{\ell \in \eta} \left(y_{\ell}t^{a(\ell)}\right)^{\nu_{j\ell}}.
\end{align*}

Therefore we get
\begin{align}\label{Re-exponent}
\Re M(t,y)\leq & -\left(\sum_{\ell \in \tau} \xi_\ell\right)\cos\vartheta +  \sum_{j\notin\tau}\left| \frac{y_j}{\prod_{\ell \in \eta}  y_{\ell}^{\nu_{j\ell}}}\right|(\max_{\ell\in \eta} \xi_\ell)^{\sum_{\ell\in \eta} \nu_{j\ell}} \leq \\
\leq & -\left(\sum_{\ell \in \tau} \xi_\ell \right)\cos\vartheta + K\max\left(\left(\sum_{\ell \in \tau} \xi_\ell\right)^{\hspace{-1mm}\kappa},1\right)
\end{align}
where $\kappa:=\max_{j \notin \tau} (\sum_{\ell \in \eta} \nu_{j\ell})<1$, and $K=\sum_{j\not\in \tau}\left| \frac{y_j}{\prod_\ell y_{\ell}^{\nu_{j\ell}}}\right|$ is a constant for a fixed value of $y\in (\CC^*)^n$. Set $\xi:=\sum_{\ell\in \tau} \xi_\ell$. We see that $\Re M(t,y)$ is bounded by $- \xi\cos \vartheta+K\xi^\kappa $ which tends to $-\infty$ when $\xi \to +\infty$.

It also follows that $|\exp(M(t,y))|$ is bounded:  $$|\exp(M(t,y))|\leq \exp(- \xi\cos \vartheta+K\max(\xi,1)^\kappa )\leq e^{L}$$ where $L:=\sup_{\xi\in\RR_{>0}}(-\xi\cos\vartheta+K\max(\xi,1)^\kappa).$

Since $\sigma \subseteq \tau$, we have $\xi=\left(\sum_{\ell\in \tau} \xi_\ell\right)\geq |t_1|+\cdots+|t_d|$
and we get a rapid decay condition when $|t_1|+\cdots+|t_d| \to +\infty$. There are positive constants $c>0$ small enough and $R>0$ big enough such that
\[
|t_1|+\cdots+|t_d|>R \Longrightarrow|\exp(M(t,y))|<\exp(-c(|t_1|+\cdots+|t_d|)).
\]
It is convenient for further calculation to incorporate the upper bound $e^{L}$ in a global inequality. For  $C=L+c R>0$ we have that
\begin{equation}\label{dominant-exponent}
\forall t\in D_{p,\delta}, \quad |\exp(M(t,y))|<\exp(C-c(|t_1|+\cdots+|t_d|)).
\end{equation}
The absolute convergence of the integral $F_{p,\delta}(\beta; y)$ follows now exactly as in the proof of Lemma \ref{conv} by the assumption $\Re \beta_k <0$ for all $k\in  \{1,\ldots ,d\}$. \end{proof}

\begin{remark} Notice that for fixed $p\in \ZZ^d$ and $\delta\in \RR^d$ with $\vert \delta_k \vert < 1/2$, the set of conditions $\eqref{sufficient-bis}_\ell$ on $y=(y_{d+1},\ldots,y_n)$, for $\ell \in \tau_A$, defines
an open set in $\CC^{\{d+1,\dots,n\}\setminus\tau_A}\times(\CC^*)^{\tau_A\setminus\sigma}$.
On the factor $(\CC^*)^{\tau_A\setminus\sigma}$ this open set is a product of open sectors. \end{remark}

\begin{remark}\label{uniform-constants}
In the proof of Lemma \ref{sufficientplus}, if we assume that $y$ varies in a compact neighborhood of a given point in $(\CC^*)^{n-d}$,
we can take the constants $K,L,c,R,C$ (which depend on $y$) as uniform bounds with respect to $y=(y_{d+1},\ldots,y_n)$.\end{remark}

\begin{remark}\label{no-rd-at-0} Notice that, in general,  we don't have rapid decay at the origin. For example, if the matrix $A$ has only positive entries, the exponential term in the integral $F_{p,\delta}(\beta; y)$ is continuous and tends to $1$, when $|t_1|+\cdots+|t_d|\to 0$, and the integrand behaves at the origin as the factor $t^{-\beta-\nn}$.
\end{remark}

Recall that $\eta$ is the set of vertices of $\Delta_A$ different from the origin. We may  weaken the hypothesis in Lemma \ref{sufficientplus} as follows: we have an analogous formula to (\ref{a(j)}) for all $j\notin \eta \cup \sigma$, namely $a(j)=\sum_{\ell \in\eta} \nu_{j\ell}a(\ell)$, with $\kappa_j:=\sum_\ell \nu_{j\ell}\leq 1$. Precisely $\kappa_j=1$ for $j\in \tau \setminus (\eta \cup \sigma)$ and $\kappa_j <1$ for $j\notin \tau=\tau_A \cup \sigma$. We set
$K_j=\left| \frac{y_j}{\prod_{\ell \in \eta} y_{\ell}^{\nu_{j\ell}}}\right|$ for $j\notin \eta \cup \sigma$ and we obtain
\begin{remark}\label{sufficientdoubleplus}
The set of conditions $(\ref{sufficient-bis})_\ell$  for $\ell\in \eta$, is sufficient for the convergence of $F_{p, \delta}(\beta; y)$ when $y$ varies in the non empty open set in $(\CC^*)^{n-d}$ defined by $\sum_{j\in \tau \setminus (\eta \cup \sigma)} K_j<\cos\vartheta$.

More precisely we can write down a refined upper bound of the real part of the exponent:
\[
\Re M(t,y)\leq-\left(\sum_{\ell\in \eta \cup \sigma}\xi_\ell\right)\cos\vartheta +  \sum_{j\notin\tau}K_j\left(\sum_{\ell\in \eta \cup \sigma}\xi_\ell\right)^{\hspace{-1mm}\kappa_j}+\sum_{j\in\tau \setminus (\eta \cup \sigma)} K_j\left(\sum_{\ell \in \eta \cup \sigma} \xi_\ell\right)^1
\]
with $K_j=\left| \frac{y_j}{\prod_{\ell \in \eta} y_{\ell}^{\nu_{j\ell}}}\right|$, for $j\notin\eta \cup \sigma$, and $\kappa_j=\sum_k \nu_{jk}<1$, for $j\notin\tau$.
Thus, the conclusion follows by an argument similar to the one in Lemma \ref{sufficientplus}.
\end{remark}

\section{Obtention of the Gevrey series.}\label{obtention-Gevrey-series}
Given a matrix $B=(b(1),\ldots, b(n))$ as in Section \ref{intro} and $\tau\subset \{1,\ldots,n\}$ we denote by $\Delta_\tau$ the convex hull of $\tau$ and the origin. We assume, after a possible reordering of the variables, that $\sigma:=\{1,\ldots,d\}$ is a maximal simplex for $B$, i.e. $\{b(1),\ldots, b(d)\}$ is a basis of $\RR^d$.

The main result in this section is Theorem \ref{Asymptotic-expansions-theorem}. We prove  that
for generic $\gamma \in \CC^d$ and under some assumptions on $B$, the space of Gevrey series solutions of $M_B(\gamma)$, along the hyperplane $x_n=0$, has a basis given by asymptotic expansions of certain holomorphic solutions of $M_B(\gamma)$, described by integral representations, as those considered by Adolphson in \cite[Sec.~2]{Adolphson}.

\begin{assumption}\label{assumption}
 We assume that the matrix $B$ satisfies  \begin{enumerate}
\item The points $b(d+1), \ldots, b(n-1)$ belong to the interior of $\Delta_\sigma$,
\item $b(n)$ is not  in $\Delta_\sigma $  and belongs to the open positive cone of $\sigma$.
\end{enumerate}
\end{assumption}

\begin{remark}
We notice that, under the above assumption, it follows from \cite{Adolphson} that, for any $\gamma\in \CC^d$,  the singular locus of the hypergeometric system $M_B (\gamma)$ is equal to
$\bigcup_{k\in \sigma} \{x_k =0\}\cup \{x_n =0\}$. Furthermore, by \cite{schulze-walther}, $M_B(\gamma)$ has a unique slope along the coordinate hyperplane $\{x_n=0\}$.
\end{remark}

In this Section we prove the following:
\begin{theorem}\label{Asymptotic-expansions-theorem}
In the above situation, let us assume that  $\gamma \in \CC^d$ and $\Re(B_\sigma^{-1}\gamma)<0$. Then all the Gevrey solutions of $M_B(\gamma)$ along the hyperplane $x_n=0$
can be described as linear combination of a fixed set of asymptotic expansions of integral solutions of type $I_{C_{p,\delta}}(\gamma; x)$.
Moreover, for each cycle $C=C_{p,\delta}$, there are meromorphic continuations with respect to $\gamma$ in $\CC^{d}$, of both $I_C(\gamma; x)$ and of the coefficients of the asymptotic expansion to the whole $\CC^{d}$. For any $\gamma\in \CC^d$ which is not a pole, the meromorphic continuation of $I_C(\gamma; x)$ has an asymptotic expansion whose coefficients are precisely the values at $\gamma$ of the meromorphic continuations of the coefficients of the asymptotic expansion of $I_C(\gamma; x)$.
\end{theorem}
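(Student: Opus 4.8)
The plan is to reduce the statement about $M_B(\gamma)$ and the cycles $C_{p,\delta}$ to the already-announced results about the reduced integrals $F_{p,\delta}(\beta; y)$ via the toric change of variables of Section~\ref{section-change-coordinates}. First I would recall that, by the relation \eqref{conv-I-to-F}, one has $I_{p,\delta}(\gamma; x)=\det(B_\sigma^{-1}){x_\sigma}^{B_\sigma^{-1}\gamma}G_{p,\delta}(\gamma;y)$ with $y_j=x_jx_\sigma^{-B_\sigma^{-1}b(j)}$, and $G_{p,\delta}$ is, after the renaming of Subsection~\ref{reduced-version-F}, exactly $F_{p,\delta}(\beta;y)$ for $A=B_\sigma^{-1}B$ and $\beta=B_\sigma^{-1}\gamma$. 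The hypothesis $\Re(B_\sigma^{-1}\gamma)<0$ is precisely $\Re\beta<0$, and Assumption~\ref{assumption} on $B$ translates into Assumption~\ref{assumption-2} on $A$, so the machinery of Section~\ref{obtention-Gevrey-series} applies. I would then invoke Lemma~\ref{remark-delta} (effective convergence) so that, for each class $p$ modulo $\ZZ\,{}^tB_\sigma$, a suitable $\delta$ can be chosen making $I_{p,\delta}(\gamma;x)$ an honest convergent holomorphic solution of $M_B(\gamma)$ on an appropriate open set; the prefactor ${x_\sigma}^{B_\sigma^{-1}\gamma}$ is holomorphic and nonvanishing there, so asymptotic expansions of $I_{p,\delta}$ along $\{x_n=0\}$ correspond bijectively to those of $F_{p,\delta}$ along $\{y_n=0\}$.

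Next I would establish the spanning statement. By Theorem~\ref{ASY}, each $F_{p,\delta}(\beta;y)$ has the Gevrey asymptotic expansion \eqref{F-a-exp} along $y_n=0$, and pulling back through \eqref{conv-I-to-F} multiplies this by the monomial prefactor; one checks the resulting series is a Gevrey series solution of $M_B(\gamma)$ along $x_n=0$ (this is the content of the integrals being formal solutions of the GKZ system, cf.\ \cite[Sec.~2]{Adolphson}, together with the compatibility of the change of variables with the Euler operators). Then I would count: letting $p$ range over a set of representatives of $\ZZ^d/\ZZ\,{}^tB_\sigma$, one gets $|\det B_\sigma|$ integrals $I_{p,\delta}(\gamma;x)$, which (as recorded after \eqref{arg}) are genuinely distinct, and for $\sigma$ chosen so that $\{1,\dots,d\}$ supports the maximal simplex $\sigma$ of Assumption~\ref{assumption}, one has $|\det B_\sigma|=d!\,{\rm Vol}(\Delta_\sigma)$, which by \cite{F} is exactly the dimension of the space of Gevrey series solutions along $Y=\{x_n=0\}$ for generic $\gamma$. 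Thus, once linear independence of the $|\det B_\sigma|$ leading terms is shown, the associated asymptotic expansions form a basis. I would prove independence by examining the leading coefficient $A(\beta;0,y')$ of \eqref{F-a-exp}, which itself depends on the chosen $p,\delta$ through the cycle $D_{p,\delta}$; the monomial prefactors ${x_\sigma}^{B_\sigma^{-1}\gamma}$ attached to different $p$ differ by the phase factors $e^{-2\pi\sqrt{-1}\langle m,\gamma\rangle}$ computed after \eqref{arg}, and these are distinct for generic $\gamma$, which, combined with a Vandermonde-type argument, separates the expansions.

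Finally I would handle the meromorphic continuation. Here I appeal directly to Lemma~\ref{analytic-continuation}: $F_{p,\delta}(\beta;y)$, a priori defined for $\Re\beta<0$, extends to a meromorphic function $\widetilde F(\beta;y)$ of $\beta\in\CC^d$ with poles along the locally finite union of hyperplanes $\mathcal P$; pulling back by $\gamma\mapsto B_\sigma^{-1}\gamma$ (a linear isomorphism) and multiplying by the entire-in-$\gamma$ factor ${x_\sigma}^{B_\sigma^{-1}\gamma}$ gives the meromorphic continuation of $I_C(\gamma;x)$ with poles along the image of $\mathcal P$. Lemma~\ref{asymptotic-expansion-tilde} then says $\widetilde F(\beta;y)$ still admits an asymptotic expansion along $y_n=0$ whose coefficients $\widetilde A(\beta;m,y')$ are the meromorphic continuations of the $A(\beta;m,y')$; transporting this through \eqref{conv-I-to-F} yields the last assertion of the theorem, namely that for $\gamma$ not a pole the continuation of $I_C(\gamma;x)$ has an asymptotic expansion whose coefficients are the values at $\gamma$ of the continued coefficients.

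The step I expect to be the main obstacle is the linear independence of the $|\det B_\sigma|$ asymptotic expansions: one must verify that the leading coefficients $A(\beta;0,y')$ attached to inequivalent cycles $D_{p,\delta}$, together with their monomial prefactors, are linearly independent over the ring of series, and that the generic choice of $\gamma$ (avoiding $\mathcal P$ and the resonance loci where the phase factors collide) does not conflict with the genericity already needed for the dimension count in \cite{F}; making these genericity conditions simultaneously achievable, and pinning down the precise determinant/Vandermonde computation that forces independence, is the technical heart of the argument.
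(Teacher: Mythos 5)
Your reduction to the integrals $F_{p,\delta}(\beta;y)$ via \eqref{conv-I-to-F}, and your treatment of the meromorphic continuation part through Lemma \ref{meromorphic-continuation} and Lemma \ref{asymptotic-expansion-tilde}, match the paper. The genuine gap is in the spanning/independence step, and it is twofold. First, you invoke \cite{F} for the dimension $d!\operatorname{Vol}(\Delta_\sigma)$ of the Gevrey solution space \emph{for generic} $\gamma$, but the theorem asserts the conclusion for every $\gamma$ with $\Re(B_\sigma^{-1}\gamma)<0$; juggling extra genericity here is not only inelegant but unnecessary, and the paper avoids it: the upper bound $[\ZZ^d:\ZZ B_\sigma]$ on the dimension is proved directly by passing to initial series with respect to $w=(0,\dots,0,1)$, which are holomorphic solutions of $\operatorname{in}_{(-w,w)}(H_B(\gamma))$ (\cite[Lemma 2.1.6, Prop. 2.5.7]{SST}), whose rank is computed from \cite[Th.~4.21, Rk.~4.23, Th.~4.28]{schulze-walther} whenever $\gamma$ is not rank--jumping, and $\Re\beta<0$ already guarantees non--rank--jumping by \cite[Th.~5.15]{Adolphson} and puts $\beta$ outside $\mathcal P$ (all columns of $A$ have nonnegative entries), so no further genericity is needed.

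Second, your independence mechanism does not work as described. The phase factors $e^{-2\pi\sqrt{-1}\langle m,\gamma\rangle}$ appearing after \eqref{arg} compare integrals for $p\equiv p' \bmod \ZZ\,{}^tB_\sigma$, i.e.\ for the \emph{same} class (same projected cycle); they carry no information about inequivalent classes. Moreover, the leading coefficient $A_{p,\delta}(\beta;0,y')$ in the $y_n$--expansion cannot separate the expansions: for instance when $n=d+1$ there is no $y'$ and this coefficient is the constant $e^{\sqrt{-1}\,\pi\langle 2p+\nn,-\beta\rangle}\Gamma(-\beta)$, so all leading terms are proportional for every $p$. The paper's actual argument computes \emph{all} coefficients explicitly as products of $\Gamma$--values (Lemmas \ref{Gamma-calculation} and \ref{conv-coeff}), regroups the full expansion — including the powers of $y_n$, since the class of $A_{\ol\sigma}\mathbf q$ in $\ZZ A/\ZZ^d$ depends on the exponent of $y_n$ as well — into the sub-series $S_{\mathbf k}$, $\mathbf k\in\Omega$, which have pairwise disjoint supports $\Lambda_{\mathbf k}$ and are therefore automatically independent, and then shows that the square matrix $\bigl(e^{\sqrt{-1}\,\pi\langle 2p,\,A_{\ol\sigma}\mathbf k\rangle}\bigr)_{\mathbf k,p}$ is invertible when $p$ runs over a suitable set of $[\ZZ^d:\ZZ B_\sigma]$ representatives, by \cite[Proposition~6.3]{Matsubara-Heo}. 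Both the linear independence of the integrals and the fact that their expansions span the whole Gevrey solution space follow from this invertibility combined with the dimension bound above. This regrouping by classes of $\ZZ A/\ZZ^d$ and the character-matrix invertibility are exactly the content missing from your ``Vandermonde-type argument'', i.e.\ the technical heart you flagged but did not supply.
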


In the next three subsections we are proving the analogous result for the reduced version $F_{p,\delta}$ of the hypergeometric integrals $I_{C_{p,\delta}}$; see Subsection \ref{reduced-version-F}. The transfer of the results to the integrals $I_{C_{p,\delta}}$ in the form of Theorem \ref{Asymptotic-expansions-theorem} is immediate.

\subsection{Existence of asymptotic expansions for the integrals.}
As we did in Subsection \ref{reduced-version-F}, let us consider  $\beta\in \CC^d$ and $A=(a(1),\dots,a(n))$ is a $d\times n$ matrix with rational coefficients and with $(a(1),\dots,a(d))=(e(1),\dots,e(d))$ the unit matrix. Let us denote by $|a|=a_1 +\cdots + a_d$ the sum of the coordinates of any vector $a\in \QQ^d$. Assumption \ref{assumption} takes the following form in this reduced presentation:
\begin{assumption}\label{assumption-2} The matrix $A$ satisfies:
\begin{enumerate}
\item For $j=d+1,\ldots, n-1$, the rational vector $a(j)$ is in the open positive quadrant in $\QQ^d$ and $|a(j)|<1.$
\item The rational vector $a(n)$ belongs to the open positive quadrant in $\QQ^d$ and $|a(n)|>1.$
\end{enumerate}
\end{assumption}

\begin{lemma}\label{remark-delta} For $\Re\beta<0$ and under Assumption \ref{assumption-2} one can find for each $y_{n,0} \in \CC^*$, and each $p\in \ZZ^d$ a value of the parameter $\delta\in \RR^d$ such that the integral $F_{p,\delta}(\beta; y)$ is absolutely convergent.
\end{lemma}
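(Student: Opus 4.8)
The strategy is to reduce the convergence of $F_{p,\delta}(\beta;y)$ to the sufficient conditions already established in Lemma \ref{sufficientplus} (or in the sharper Remark \ref{sufficientdoubleplus}), namely the validity of the conditions $\eqref{sufficient-bis}_\ell$ for $\ell$ ranging over $\tau_A$ (resp.\ over $\eta$). The key observation is that $\eqref{sufficient-bis}_\ell$ reads
\[
\arg y_\ell + \ideal{\nn+\delta+2p,\,a(\ell)}\pi \in \Theta = \left]\tfrac{\pi}{2},\tfrac{3\pi}{2}\right[+2\pi\ZZ,
\]
and once $p$ is fixed this is a condition on the angle $\ideal{\delta,a(\ell)}\pi$, i.e.\ a linear condition on $\delta\in\RR^d$ whose ``radius of freedom'' is controlled by the size of $|a(\ell)| = |\ideal{\nn,a(\ell)}|$ together with the box constraint $|\delta_k|<1/2$. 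First I would record that, since the columns $a(1),\dots,a(d)$ are the standard basis vectors, the conditions $\eqref{sufficient-bis}_k$ for $k\in\sigma$ are automatically satisfied (this is already noted in the text: $t_k = y_k t^{a(k)}$ with $y_k=1$ and $|\delta_k|<1/2$), so only the indices $\ell\in\tau_A\setminus\sigma$ need attention.

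\textbf{Main step.} For such an $\ell$, Assumption \ref{assumption-2}(1) gives $a(\ell)$ in the open positive quadrant with $|a(\ell)|<1$ when $\ell\le n-1$, and Assumption \ref{assumption-2}(2) gives $a(n)$ in the open positive quadrant with $|a(n)|>1$. Write $a(\ell) = (a_1(\ell),\dots,a_d(\ell))$ with all $a_k(\ell)>0$. Then for any $\delta$ in the open box $\prod_k(-\tfrac12,\tfrac12)$ one has
\[
\bigl|\ideal{\delta,a(\ell)}\bigr| \;=\;\Bigl|\sum_k \delta_k a_k(\ell)\Bigr| \;<\; \tfrac12\sum_k a_k(\ell)\;=\;\tfrac12|a(\ell)|.
\]
When $|a(\ell)|\le 1$ (the case $\ell\le n-1$) this already gives $|\ideal{\delta,a(\ell)}|<\tfrac12$, so the quantity $\arg y_\ell + \ideal{\nn+\delta+2p,a(\ell)}\pi$ stays in an interval of length $<\pi$ around $\arg y_\ell + \ideal{\nn+2p,a(\ell)}\pi$; hence we can first choose the integer vector $p$ (or rather use its freedom modulo $\ZZ\,{}^tB_\sigma$, which does not matter here since $p$ is given) — more precisely, since $p$ is prescribed, we instead use the freedom in the sign/branch of $\arg y_\ell$: no, the cleaner route is to pick $\delta$ so that each individual angle lands in $\Theta$. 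The point is that for $\ell=n$ we have $|a(n)|>1$, so the above bound only gives $|\ideal{\delta,a(n)}|<\tfrac12|a(n)|$, which is not automatically $<\tfrac12$; here is where the freedom of choosing $\delta_k\in(-\tfrac12,\tfrac12)$ for $k\in\sigma$ must be exploited to steer $\ideal{\delta,a(n)}\pi$ into the open arc $\Theta$ shifted by the fixed amount $\arg y_{n,0} + \ideal{\nn+2p,a(n)}\pi$. Since $a(n)$ has strictly positive entries, the linear functional $\delta\mapsto\ideal{\delta,a(n)}$ is nonzero, so its image over the open box is an open interval of positive length $|a(n)|$ centered at $0$; as $|a(n)|>1$ this interval has length $>1$, hence its $\pi$-scaled version has length $>\pi$ and therefore, after the fixed translation, necessarily meets the open arc $\Theta$ (which is an arc of length $\pi$). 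So one can choose $\delta$ — in fact, one may even choose $\delta$ to be a small multiple of $a(n)$ itself, or of a suitable coordinate vector — making $\eqref{sufficient-bis}_n$ hold; and because $a(d+1),\dots,a(n-1)$ all have $|a(j)|<1$ one checks that this same choice, possibly after shrinking, keeps $\eqref{sufficient-bis}_\ell$ valid for those $\ell$ as well, using continuity and the slack $1-|a(j)|>0$. (Alternatively, and more robustly, one invokes Remark \ref{sufficientdoubleplus}: it suffices to arrange $\eqref{sufficient-bis}_\ell$ for $\ell\in\eta$ only, together with the inequality $\sum_{j\in\tau\setminus(\eta\cup\sigma)}K_j<\cos\vartheta$, which can be met by a further small perturbation of $\delta$ since the $K_j$ depend continuously on the chosen angles.)

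\textbf{Conclusion and the hard part.} Once $\delta$ is chosen so that $\eqref{sufficient-bis}_\ell$ holds for all $\ell\in\tau_A$ (equivalently $\ell\in\eta$ in the refined version), Lemma \ref{sufficientplus} (resp.\ Remark \ref{sufficientdoubleplus}) applies verbatim and yields the absolute convergence of $F_{p,\delta}(\beta;y)$ for $\Re\beta<0$; note the hypothesis $|\delta_k|<1/2$ is preserved throughout by construction, so the constant $c=\min_k\cos(\pi\delta_k)$ in those proofs is still positive. The statement is for a fixed $y_{n,0}\in\CC^*$, so I would phrase the choice of $\delta$ as depending on $p$ and on $y_{n,0}$ (and implicitly on the other coordinates $y_{d+1},\dots,y_{n-1}$ if one wants a full open neighbourhood), exactly as announced in the sentence preceding the lemma (``with a careful choice of the parameter $\delta$, depending on $p\in\ZZ^\sigma$ and $x\in(\CC^*)^n$''). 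The only genuinely delicate point is the simultaneous satisfaction of $\eqref{sufficient-bis}_\ell$ over \emph{all} relevant $\ell$ with a \emph{single} $\delta$ lying in the open box: the constraint for $\ell=n$ is the binding one (it is the only index with $|a(\ell)|>1$), and one must verify that steering $\ideal{\delta,a(n)}$ into the required arc does not push $\ideal{\delta,a(\ell)}$ out of the (wider) admissible range for the interior points $\ell=d+1,\dots,n-1$ — this is where Assumption \ref{assumption-2}(1), i.e.\ $|a(j)|<1$ strictly, gives exactly the room one needs. I expect this compatibility check to be the main obstacle, and I would handle it either by the explicit ansatz $\delta = \varepsilon\, a(n)/|a(n)|$-type perturbation with $\varepsilon$ small and a case analysis on the branch, or by the softer openness argument via Remark \ref{sufficientdoubleplus}.
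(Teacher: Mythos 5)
Your treatment of the index $\ell=n$ is exactly the paper's argument: since $a(n)$ has strictly positive entries, $\delta\mapsto\ideal{\delta,a(n)}$ maps the open box $\prod_k\left(-\tfrac12,\tfrac12\right)$ onto an interval of length $|a(n)|>1$, so $\arg y_{n,0}+\ideal{\nn+\delta+2p,a(n)}\pi$ sweeps an interval of length $>\pi$ and can be steered into $\Theta$; this gives $\eqref{sufficient}_n$, the conditions for $k\in\sigma$ being automatic.

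However, there is a genuine gap in what you identify as ``the hard part''. You treat the indices $j=d+1,\dots,n-1$ as if they might belong to $\tau_A$ and claim that the same $\delta$, ``possibly after shrinking'', satisfies $\eqref{sufficient-bis}_j$ for them ``using continuity and the slack $1-|a(j)|>0$''. This is false: for such $j$ the reachable set of $\arg\bigl(y_jt^{a(j)}\bigr)$ as $\delta$ ranges over the box is an interval of length $|a(j)|\pi<\pi$ around the fixed value $\arg y_j+\ideal{\nn+2p,a(j)}\pi$, so for unfavourable $\arg y_j$ this interval lies entirely in the complement of $\Theta$ and \emph{no} admissible $\delta$ makes $\eqref{sufficient-bis}_j$ hold (e.g.\ $d=1$, $a(j)=1/2$, $p=0$, $\arg y_j=-\pi/2$). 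The condition $|a(j)|<1$ works \emph{against} you here, not for you. The correct resolution, which is the actual role of Assumption \ref{assumption-2}(1), is that these conditions are simply not needed: since $a(d+1),\dots,a(n-1)$ lie in the interior of the unit simplex and hence in the interior of $\Delta_A$, they do not lie on $\p\Delta_A$, so $j\notin\tau_A$ and Lemma \ref{sufficientplus} imposes no constraint at these indices (compare Remark \ref{remark-first-condition}, which says precisely that without Assumption \ref{assumption-2}(1) one would have to impose such extra conditions on the arguments of $y$). Thus $\tau_A\subseteq\sigma\cup\{n\}$, and the only condition to arrange is $\eqref{sufficient}_n$ — which your main step already handles. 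Your fallback via Remark \ref{sufficientdoubleplus} has the same defect: the inequality $\sum_{j\in\tau\setminus(\eta\cup\sigma)}K_j<\cos\vartheta$ cannot be ``met by a further small perturbation of $\delta$'', because the constants $K_j=\bigl|y_j/\prod_{\ell\in\eta}y_\ell^{\nu_{j\ell}}\bigr|$ do not depend on $\delta$ at all (under the assumption this sum is in fact empty, but you do not establish that either). So the proof needs the missing observation that the interior columns are outside $\tau_A$; with it, your argument collapses to the paper's one-line proof.
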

\begin{proof} Indeed one can choose $\delta=\delta (p, y_{n,0}) \in \RR^d$ with $|\delta_k|<1/2$ such that equation  $\eqref{sufficient}_n$ is satisfied. Such a $\delta$ exists because $|a(n)|>1$. Thus, the Lemma follows from Lemma $\ref{sufficientplus}$.
\end{proof}

\begin{remark}\label{sector-S-p-delta} Notice that  condition $\eqref{sufficient}_n$ involves a determination $\alpha_n$ of $\arg y_{n,0}$. Let us consider the open sector $S_{p,\delta}$ in $\CC^*$ around the direction $e^{i\alpha_n}$ which is the image of the connected component containing $\alpha_n$ of the set defined in $\wt{\CC^*}$ by $(\ref{sufficient})_n$ for the above mentioned $p,\delta$. \end{remark}

\begin{theorem} \label{ASY} If $\Re \beta_k <0$ for all $k=1,\ldots ,d$, then for any given $y_{n,0}\in \CC^*$ there is an asymptotic expansion with respect to the variable $y_n$ in the open sector $S_{p,\delta}$:
\[
F_{p,\delta}(\beta; y) \, \underset{y_n\to 0} {\sim} \, \sum_{m\in \NN}A_{p,\delta}(\beta; m, y')\frac{y_n^m}{m!}
\] where $y'=(y_{d+1},\ldots, y_{n-1})$ and $$A_{p,\delta}(\beta; m , y'):=\int_{D_{p,\delta}}t^{-\beta-\nn+ma(n)}\exp\left(t_1+\cdots+t_d + \sum_{j=d+1}^{n-1} y_j t^{a(j)}\right) dt.$$
\end{theorem}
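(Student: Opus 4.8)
The plan is to expand the factor $\exp(\sum_{j=d+1}^n y_j t^{a(j)})$ in the integrand of $F_{p,\delta}(\beta;y)$ as a power series in $y_n$, namely $\exp(y_n t^{a(n)})=\sum_{m\in\NN}\frac{y_n^m}{m!}t^{ma(n)}$, and to integrate term by term against the remaining integrand $t^{-\beta-\nn}\exp(t_1+\cdots+t_d+\sum_{j=d+1}^{n-1}y_jt^{a(j)})$. The $m$-th term is exactly $A_{p,\delta}(\beta;m,y')\frac{y_n^m}{m!}$; to make this legitimate and to obtain the asymptotic (not convergent) character of the expansion, the standard route is to prove the Taylor-type estimate
\[
\left| F_{p,\delta}(\beta;y)-\sum_{m=0}^{N-1}A_{p,\delta}(\beta;m,y')\frac{y_n^m}{m!}\right| \le C_N\,|y_n|^N
\]
for each $N$, uniformly for $y_n$ in a subsector of $S_{p,\delta}$ with $|y_n|$ bounded and $y'$ fixed (or in a compact set of $(\CC^*)^{n-d-1}$). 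This is done by writing the remainder of the exponential series via its integral form,
\[
\exp(y_n t^{a(n)})-\sum_{m=0}^{N-1}\frac{(y_n t^{a(n)})^m}{m!}=\frac{y_n^N t^{Na(n)}}{(N-1)!}\int_0^1(1-u)^{N-1}e^{u y_n t^{a(n)}}\,du,
\]
plugging it into the integral for $F_{p,\delta}$, and bounding the absolute value of the resulting double integral.

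The key input is the uniform rapid-decay bound $\eqref{dominant-exponent}$ from the proof of Lemma \ref{sufficientplus}, together with Remark \ref{uniform-constants}. First I would check that along $D_{p,\delta}$ and for $y_n$ in the sector $S_{p,\delta}$ (by construction of $S_{p,\delta}$ in Remark \ref{sector-S-p-delta}, condition $\eqref{sufficient}_n$ holds, so $\Re(y_nt^{a(n)})\le 0$, and in fact $\Re(uy_nt^{a(n)})\le 0$ for $u\in[0,1]$), one has $|e^{uy_nt^{a(n)}}|\le 1$. Hence the remainder integrand is dominated by
\[
\frac{|y_n|^N}{(N-1)!}\,|t|^{-\Re\beta-\nn+N\Re a(n)}\,\Bigl|\exp\bigl(t_1+\cdots+t_d+\textstyle\sum_{j=d+1}^{n-1}y_jt^{a(j)}\bigr)\Bigr|.
\]
Now I apply Lemma \ref{sufficientplus} — but to the matrix $A'=(a(1),\dots,a(n-1))$ obtained by deleting the last column, whose associated $\Delta_{A'}$ still satisfies the relevant combinatorial hypotheses since $a(n)$ was the outer vertex — to get, for fixed $y'$, a bound $\exp(C'-c'(|t_1|+\cdots+|t_d|))$ on that exponential factor, with $C',c'>0$. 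The monomial factor $|t|^{N\Re a(n)}$ only makes the integrand larger, but since $a(n)$ has nonnegative entries this is absorbed: the resulting integral $\int_{\RR_{>0}^d} r^{\alpha+N\Re a(n)}e^{-c'(r_1+\cdots+r_d)}\,dr=c'^{-|\alpha+N\Re a(n)|-d}\,\Gamma(\alpha+N\Re a(n)+\nn)$ is finite, giving $C_N=\frac{1}{(N-1)!}e^{C'}c'^{-|\alpha+N\Re a(n)|-d}\Gamma(\alpha+N\Re a(n)+\nn)$ (finite because $\Re\beta<0$ guarantees $\alpha_k>-1$). This simultaneously shows that each coefficient integral $A_{p,\delta}(\beta;m,y')$ converges (same estimate with $N$ replaced by $m$) and that the remainder is $O(|y_n|^N)$.

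The main obstacle is ensuring the bounds are genuinely \emph{uniform} in $y_n$ across a full subsector of $S_{p,\delta}$ (not merely pointwise) and uniform on compacta in $y'$, so that the estimate qualifies as an asymptotic expansion in the sense required; this is where Remark \ref{uniform-constants} is invoked, noting that the constants $C',c'$ coming from Lemma \ref{sufficientplus} depend only on $|y_j|$ for $j=d+1,\dots,n-1$ and on the angular data, all of which vary in a compact set. A secondary point to be careful about is that one must verify the reduced matrix $A'$ indeed still satisfies the hypothesis of Lemma \ref{sufficientplus} — i.e. that $\eqref{sufficient-bis}_\ell$ for $\ell\in\tau_{A'}$ holds for the same $\delta$ — which follows because Assumption \ref{assumption-2}(1) keeps $a(d+1),\dots,a(n-1)$ strictly inside $\Delta_\sigma$, so the facet structure of $\Delta_{A'}$ relevant to the convergence argument is governed by $\sigma$ exactly as before. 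Once the Taylor estimate is established for all $N$, the existence of the asymptotic expansion $\eqref{F-a-exp}$ in $S_{p,\delta}$ follows by definition, completing the proof.
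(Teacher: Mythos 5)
Your proposal is correct and takes essentially the same route as the paper: bound the Taylor remainder of $\exp(y_nt^{a(n)})$ using $\Re(y_nt^{a(n)})<0$ for $y_n\in S_{p,\delta}$ and $t\in D_{p,\delta}$, dominate the resulting integrand by a function independent of $y_n$, and invoke Lemma \ref{sufficientplus} for the submatrix of the first $n-1$ columns together with the nonnegativity of $a(n)$ (so $\Re(\beta-Na(n))<0$) to get integrability, with uniformity from Remark \ref{uniform-constants}. The only cosmetic difference is that you obtain the remainder estimate from the integral form of the Taylor remainder plus $|e^{uy_nt^{a(n)}}|\le 1$, whereas the paper quotes directly the standard inequality $|\Phi_N(z)|\le |z|^N/N!$ for $\Re z<0$.
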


\begin{proof}
We have to prove that for any integer $N > 0$ there exists $K_N =K_N (\beta , y')>0$ such that
$$\left|F_{p,\delta}(\beta; y) - \sum_{m=0}^{N-1}A_{p,\delta}(\beta; m , y')\frac{y_n^m}{m!}\right|\leq K_N  |y_n|^N$$ holds for every $y_n \in S_{p,\delta}$.

Let $$\Phi_N(z):= e^z-\sum_{m=0}^{N-1}\frac{z^m}{m!}$$ for $z\in \CC$. Then we have
\[
|\Phi_N(z)|\leq \dfrac{|z|^N}{N!} {\mbox { for all }} z {\mbox { such that }}  \Re z<0.
\]

Recall that by the assumption on $\delta$ we have $\Re (y_n t^{a(n)})<0$ when $t\in D_{p,\delta}$ since $y_n \in S_{p,\delta}$. Thus, we have

$$\left|F_{p,\delta}(\beta;y)-\sum_{m=0}^{N-1}A_{p,\delta}(\beta; m, y')\frac{y_n^m}{m!}\right|=
\left|y_n^N\right| \left|Q_{p,\delta}(\beta; y, N)\right|$$ where $$Q_{p,\delta}(\beta; y, N)=\int_{D_{p,\delta}} t^{a(n)N-\beta-\bf1}\exp{\left(t_1+\cdots+t_d + \sum_{j=d+1}^{n-1} y_j t^{a(j)}\right)}\frac{\Phi_N(y_n t^{a(n)})}{(y_n t^{a(n)})^N}{dt}.$$
Notice that the absolute value of the integrand is bounded by the function
$$\dfrac{1}{N!}\left| t^{a(n)N-\beta-\bf1}\exp{\left(t_1+\cdots+t_d + \sum_{j=d+1}^{n-1} y_j t^{a(j)}\right)}\right|$$ which is independent of $y_n$ and integrable over $D_{p,\delta}$ by Lemma \ref{sufficientplus} (that can be applied to the submatrix of $A$ defined by its first $n-1$ columns
because of Assumption \ref{assumption-2}). Notice that we use here that $\Re (\beta - a(n)N)<0$ for all $N>0$ since $a(n)$ does not have negative coordinates.

Thus, there exists $K_N =K_N (\beta, y')>0$, that can be locally bounded with respect to $(\beta, y')$, such that
$\left|Q_{p,\delta}(\beta; y, N)\right|\leq K_N$.
This finishes the proof.
\end{proof}

\begin{remark}\label{remark-first-condition}
Notice that if we assume conditions $(\ref{sufficient})_j$ for all $j\in \ol{\sigma}$, we don't need condition (1) in Assumption  \ref{assumption-2} in the proof of Theorem \ref{ASY}.
\end{remark}

\begin{remark}
The function $F_{p,\delta}(\beta ,y)$ is locally constant on $\delta$, which varies in a disjoint union of connected  open sets in $\RR^d$. It would be interesting to find an example, if any, with $\delta ,\delta'$ in different connected components, such that $F_{p,\delta}(\beta,y)\neq F_{p,\delta '}(\beta, y)$ for a fixed $p\in \ZZ^d$.
\end{remark}

\begin{remark}\label{AisF}
Notice that $A_{p,\delta}(\beta;m,y')=F_{p,\delta}(\beta-m a(n); y')$ for the submatrix of $A$ defined by its first $n-1$ columns.
\end{remark}

We extend Theorem \ref{ASY} to non negative values of $\Re \beta_k$ in Subsection \ref{analytic-continuation}.

\subsection{Analytic continuation with respect to $\beta$.}\label{analytic-continuation}
In this section we focus on the analytic dependency of  $F(\beta;y) = F_{p,\delta}(\beta;y)$ on $\beta$. Let us take $y_{n,0}\in \CC^*$ and $p\in \ZZ^d$.  We choose  $\delta$ as in Lemma \ref{remark-delta} and we omit $p, \delta$ in the remainder of this subsection. We assume now that $y$ belongs to $\CC^{n-d-1}\times S_{p,\delta}$, where the sector  $S_{p,\delta}$ is defined in Remark \ref{sector-S-p-delta}.

The integral $F(\beta; y)$ is a solution of the reduced GG-system (see \cite{GG99}):
\begin{equation}\label{2}
\beta_kF(\beta;y)=\sum_{\ell=d+1}^n a(\ell)_k \,y_{\ell}\, F(\beta-a(\ell);y) + F(\beta-e(k);y) \quad \text{ for } k=1,\ldots, d
\end{equation}
\begin{equation}\label{4}
F(\beta-a(\ell);y)=\frac{\p F}{\p y_{\ell}}(\beta;y) \quad \text{ for } \ell=d+1,\ldots,n.
\end{equation}

\begin{lemma}\label{meromorphic-continuation}
The function $F(\beta;y)$ admits a meromorphic continuation with respect to $\beta$, denoted by $\widetilde{F}(\beta;y)$,
with poles at most along the countable locally finite union of hyperplanes $$\mathcal{P}:=\bigcup_{k=1}^d\{\beta\in\CC^d|\; \beta_k\in \pi_k (\NN A)\}$$ where $\pi_k: \QQ^d \rightarrow \QQ$ denotes the projection to the $k$-th coordinate.
\end{lemma}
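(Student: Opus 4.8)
The plan is to produce the meromorphic continuation by iterating the contiguity relation \eqref{2} to shift $\beta$ one unit at a time in each coordinate direction, exactly as one continues the Euler Gamma function via $\Gamma(\beta) = \Gamma(\beta+1)/\beta$. First I would fix $k\in\{1,\ldots,d\}$ and rewrite \eqref{2} in the form
\[
F(\beta-e(k);y) = \beta_k F(\beta;y) - \sum_{\ell=d+1}^n a(\ell)_k\, y_\ell\, F(\beta-a(\ell);y),
\]
so that, knowing $F$ on the half-space $\{\Re\beta_j<0 \text{ for all } j\}$ (where convergence is guaranteed by Lemma \ref{remark-delta}, for our fixed choice of $\delta$ adapted to $p$ and $y_{n,0}$), one expresses $F$ at a point with $\Re\beta_k$ increased by $1$ in terms of values of $F$ at points whose $k$-th coordinate real part is strictly smaller — note that $a(\ell)_k \ge 0$ and, for $\ell=d+1,\ldots,n$, $a(\ell)_k$ need not be an integer, but in every term on the right the $k$-th coordinate of the argument is $\beta_k$ or $\beta_k - a(\ell)_k \le \beta_k$, and since $|a(\ell)|<1$ for $\ell\le n-1$ while the $y_n$-term is controlled by working inside the sector $S_{p,\delta}$, iteration terminates. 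More precisely, I would set up a double induction: an outer induction on $\lceil \max_j \Re\beta_j\rceil$ and, to handle the fact that \eqref{2} with direction $k$ only lowers the $k$-th coordinate, an argument that repeatedly applying the relation in a suitable coordinate eventually pushes every real part below $0$. This defines $\widetilde F(\beta;y)$ as a meromorphic function on all of $\CC^d$, and formula \eqref{2} shows the only denominators ever introduced are the factors $\beta_k$ appearing when we solve for $F(\beta-e(k);y)$; tracking these through the iteration, a pole can occur only when some shifted coordinate $\beta_k - \langle v, e\rangle$ vanishes for a shift vector obtained as an $\NN$-combination of columns of $A$, i.e. only along $\mathcal{P}=\bigcup_k\{\beta_k\in\pi_k(\NN A)\}$.

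The key steps, in order: (i) record that $F(\beta;y)$ is holomorphic in $\beta$ on the open half-space $U:=\{\Re\beta_j<0,\ \forall j\}$ — differentiation under the integral sign is justified by the uniform bound \eqref{dominant-exponent} together with Remark \ref{uniform-constants}, and the integrand depends holomorphically on $\beta$; (ii) verify the contiguity relations \eqref{2}–\eqref{4} for $\beta\in U$, which follow by integration by parts in $t_k$ and by differentiating under the integral sign respectively (these are the reduced GG-system relations, already cited); (iii) use (the rearranged) \eqref{2} to extend $F$ coordinate by coordinate, checking that the various local definitions agree on overlaps (they do, because they are all obtained from the same relations, which hold identically on $U$ and hence on any connected open set where both sides are already defined — this is the standard uniqueness-of-analytic-continuation argument); (iv) show the extension is single-valued and meromorphic on $\CC^d$, with polar locus contained in $\mathcal{P}$, by bookkeeping the denominators; and (v) observe $\mathcal{P}$ is a countable, locally finite union of affine hyperplanes since $\pi_k(\NN A)\subset\QQ$ is a discrete set (indeed contained in $\frac{1}{q}\NN$ for a suitable common denominator $q$, using that the columns of $A$ have rational entries and lie in the positive quadrant, so $\pi_k(\NN A)\subset\RR_{\ge 0}$ has no accumulation point).

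The main obstacle I anticipate is step (iii)–(iv): because the shift vectors $a(\ell)$ for $\ell>d$ are rational rather than integral, a single application of \eqref{2} in direction $k$ moves $\beta$ by $e(k)$ but also calls for $F$ at the rationally-shifted points $\beta-a(\ell)$, so one cannot simply induct on an integer ``level'' of $\beta$. The fix is to notice that among all the arguments appearing on the right-hand side of \eqref{2}, the quantity $\Re\beta_k$ is the largest $k$-th coordinate, so choosing at each stage the coordinate $k$ achieving $\max_j \Re\beta_j$ and applying \eqref{2} strictly decreases, after finitely many steps, the number of coordinates with non-negative real part; one must check this terminates, which it does because the $y_n$-direction shift $a(n)$ has $|a(n)|>1$ and all other shifts have $|a(\ell)|\le 1$ with at least one positive coordinate, so the ``excess'' $\sum_j \max(\Re\beta_j,0)$ is decreased by a definite amount each round (here the hypothesis that $a(d+1),\ldots,a(n)\in\RR^d_{>0}$, noted in the excerpt to follow from conditions (1) and (2), is exactly what is needed so that no coordinate ever increases). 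Once termination is in hand, single-valuedness is automatic because the construction never required a choice, and the description of the polar set follows by inspection of which factors $\beta_k - (\text{integer or rational combination})$ were inverted.
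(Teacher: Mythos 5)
Your overall route is the paper's: continue $F(\beta;y)$ in $\beta$ by iterating the contiguity relation \eqref{2}, with poles produced by inverted factors $\beta_k$, and with the positivity of the columns $a(\ell)$ guaranteeing that every shift moves the argument back toward the initial half-space $\{\Re\beta<0\}$; your steps (i), (ii) and (v) supply details the paper leaves implicit, and your final description of the polar locus $\beta_k\in\pi_k(\NN A)$ is the correct one.

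However, the displayed rearrangement is solved for the wrong unknown, and taken literally that step fails. Solving \eqref{2} for $F(\beta-e(k);y)$ introduces no denominator at all (there $\beta_k$ enters as a multiplicative factor, not a divisor) and expresses $F$ at a point of \emph{smaller} $k$-th real part in terms of values at points of larger real part — the opposite of what is needed when the initial domain of convergence is $\Re\beta<0$. The continuation must use \eqref{2} solved for $F(\beta;y)$ itself,
\[
F(\beta;y)=\frac{1}{\beta_k}\Bigl(F(\beta-e(k);y)+\sum_{\ell=d+1}^{n}a(\ell)_k\,y_\ell\,F(\beta-a(\ell);y)\Bigr),
\]
whose right-hand side is already defined (meromorphically) on a strictly larger region in $\beta_k$ because every $a(\ell)_k>0$; it is this divisor $\beta_k$, propagated through the iteration (a pole at $\beta_k=c$ begets poles at $\beta_k=c+1$ and $\beta_k=c+a(\ell)_k$), that yields poles exactly along $\beta_k\in\pi_k(\NN A)$. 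Your sentence attributing the denominators to ``the factors $\beta_k$ appearing when we solve for $F(\beta-e(k);y)$'' is therefore inconsistent with your own formula. Once this is corrected, your plan coincides with the paper's proof; the paper moreover organizes the induction one coordinate at a time (continue fully in $\beta_1$ in steps of size $\widetilde{a}_1=\min_\ell\{a(\ell)_1,1\}>0$ while keeping $\Re\beta_j<0$ for $j\ge2$, then repeat for $\beta_2,\dots,\beta_d$), which makes termination immediate and removes the need for your max-coordinate/excess scheme — whose termination, note, rests on the uniform lower bound $\min_\ell\{a(\ell)_k,1\}>0$ for the decrease in the chosen coordinate, not on $|a(\ell)|<1$ nor on the sector $S_{p,\delta}$.
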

\begin{proof}
The initial domain of analyticity of $F(\beta;y)$ is defined by $\Re \beta_k <0$ for all $k=1,\ldots ,d$. Let us fix conditions $\Re \beta_k <0$ for $k=2,\ldots, d$ and extend the domain of analyticity in the coordinate $\beta_1$ using equation $(\ref{2})_1$ as follows. The functions $F(\beta-a(\ell);y)$ for $\ell=d+1,\ldots,n$ and $F(\beta-e(1);y)$ are analytic for $\Re \beta_1<\widetilde{a}_1:=\operatorname{min}_{\ell}\{a(\ell)_1 ,1\}$ and hence it follows from equation
$\eqref{2}_1$ that $F(\beta;y)$ is meromorphic in $\Re \beta_1 <\widetilde{a}_1$ with at most a pole in $\beta_1 =0$.

In the general inductive step for the variable $\beta_1$, we assume that $F(\beta;y)$ is meromorphic in the half--space $\Re \beta_1 <(q-1)\widetilde{a}_1$. Then, on the domain defined by $\Re \beta_1 < q\widetilde{a}_1$, the RHS of $(\ref{2})_1$ is meromorphic, with poles of type $\beta_1=c+1$, or $\beta_1=c+a(\ell)_1$, where $\beta_1=c$ runs over all the poles of $F(\beta;y)$. We obtain that $F(\beta;y)$ is also meromorphic in the same domain adding these new poles to those already found. Thus, by induction, we get that $F(\beta;y)$ is also meromorphic for $\beta_1\in \CC$ and $\Re \beta_k <0$ for $k=2,\ldots, d$ with poles at most along $\beta_1= \sum_{\ell=d+1}^n m_\ell a(\ell)_1 + m'$, for all $m_{d+1},\ldots, m_n, m'\in \NN$. By an analogous argument in $k=2,\ldots ,d$ we get the result.
\end{proof}

Notice that the equations \eqref{4} are then satisfied by  $\widetilde{F}(\beta;y)$ by analytic continuation on $U:=\CC^d\setminus \mathcal{P}$.

\begin{lemma}\label{asymptotic-expansion-tilde}
For any fixed $\beta\in U$, $\widetilde{F}(\beta;y)$ admits an asymptotic expansion along $y_n=0$ in $S_{p,\delta}$.
Furthermore, the coefficients $\wt{A} ({\beta}; m ,y')$ of this expansion are analytic with respect to $\beta\in U$. Hence they are analytic continuations of the coefficients $A (\beta; m, y')$ described in Theorem \ref{ASY}.
\end{lemma}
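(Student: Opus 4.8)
The plan is to bootstrap Lemma~\ref{asymptotic-expansion-tilde} from Theorem~\ref{ASY} together with the meromorphic continuation supplied by Lemma~\ref{meromorphic-continuation} and the recursion \eqref{4}. First I would fix $\beta\in U=\CC^d\setminus\mathcal P$ and unwind the definition of the coefficients: by Remark~\ref{AisF} we have, in the convergent regime $\Re\beta<0$, the identity $A_{p,\delta}(\beta;m,y')=F_{p,\delta}(\beta-ma(n);y')$ for the $(n-1)$-column submatrix $A'=(a(1),\dots,a(n-1))$ of $A$, which again satisfies Assumption~\ref{assumption-2} (now it has \emph{no} outer vertex beyond $\Delta_\sigma$, but conditions (1) plus $\sigma$ being a simplex are all Lemma~\ref{sufficientplus} needs). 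Hence the natural candidate for the continued coefficient is $\wt A(\beta;m,y'):=\wt F_{A'}(\beta-ma(n);y')$, the meromorphic continuation in $\beta$ of that reduced integral for the submatrix $A'$, which Lemma~\ref{meromorphic-continuation} produces with poles along $\bigcup_k\{\beta_k-ma(n)_k\in\pi_k(\NN A')\}$. Since $a(n)$ has non-negative coordinates and $\pi_k(\NN A')\subseteq\pi_k(\NN A)$, this polar locus is contained in $\mathcal P$, so $\wt A(\beta;m,y')$ is analytic on all of $U$, for every $m$; this gives the ``coefficients are analytic on $U$'' assertion and, by uniqueness of analytic continuation from the region $\Re\beta<0$ where they agree with $A(\beta;m,y')$, the last sentence of the lemma.

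The substance is the asymptotic expansion itself. I would establish the estimate
\[
\Bigl|\wt F(\beta;y)-\sum_{m=0}^{N-1}\wt A(\beta;m,y')\frac{y_n^m}{m!}\Bigr|\le K_N(\beta,y')\,|y_n|^N
\qquad\text{for } y_n\in S_{p,\delta},
\]
for every $N$, with $K_N$ locally bounded in $(\beta,y')$ on $U$. The cleanest route is analytic continuation of an identity rather than re-running the integral estimate. Fix $N$ and consider the function
\[
R_N(\beta;y):=\wt F(\beta;y)-\sum_{m=0}^{N-1}\wt A(\beta;m,y')\frac{y_n^m}{m!}.
\]
By Theorem~\ref{ASY} and the discussion above, in the half-space $\Re\beta<0$ we have $R_N(\beta;y)=y_n^N\,Q_{p,\delta}(\beta;y,N)$, and $Q_{p,\delta}(\beta;y,N)$ is given by an integral that, crucially, converges for $\Re\bigl(\beta-Na(n)\bigr)<0$ and more generally (after the same meromorphic-continuation argument applied to the integrand $t^{a(n)N-\beta-\nn}\exp(\cdots)$, i.e. to the shifted matrix) admits a meromorphic continuation $\wt Q_{p,\delta}(\beta;y,N)$ in $\beta$ with poles again inside $\mathcal P$. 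Then the identity $R_N(\beta;y)=y_n^N\,\wt Q_{p,\delta}(\beta;y,N)$ propagates to all $\beta\in U$ by analytic continuation in $\beta$ (both sides are meromorphic, holomorphic on $U$, and equal on the open set $\Re\beta<0$). Finally one bounds $|\wt Q_{p,\delta}(\beta;y,N)|$ locally uniformly in $(\beta,y')$ on $U$: away from $\mathcal P$ a compactness argument, using that $\wt Q$ is obtained from finitely many applications of the recursions \eqref{2}--\eqref{4} to genuinely convergent integrals whose bounds (via \eqref{dominant-exponent} and Remark~\ref{uniform-constants}) are locally uniform, yields the desired $K_N(\beta,y')$.

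I would organise the write-up in three short steps: (i) define $\wt A(\beta;m,y')$ via the submatrix $A'$ and record that its polar locus lies in $\mathcal P$, so it is analytic on $U$ and equals $A(\beta;m,y')$ for $\Re\beta<0$; (ii) observe that $\wt F(\beta;y)=\sum_{m=0}^{N-1}\wt A(\beta;m,y')y_n^m/m!+y_n^N\wt Q_{p,\delta}(\beta;y,N)$ holds on $U$ by analytic continuation of the corresponding identity from $\{\Re\beta<0\}$, where it is Theorem~\ref{ASY}; (iii) bound the remainder $\wt Q_{p,\delta}$ locally uniformly on $U$ using the uniform-constant remark and the recursive structure in $\beta$. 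The main obstacle is step (iii): in the region $\Re\beta_k\ge 0$ the integral representation of $\wt Q$ is no longer literally convergent, so one cannot simply dominate the integrand as in the proof of Theorem~\ref{ASY}; instead the local bound must be extracted from the finitely-many-step recursion that builds $\wt Q$ out of convergent integrals, and one has to check that the denominators introduced at each step of the continuation (the factors $\beta_k-(\text{element of }\pi_k(\NN A))$) stay bounded away from zero on a neighbourhood of any compact $K\subset U$, which is exactly where local finiteness of $\mathcal P$ is used.
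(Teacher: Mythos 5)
Your proposal is correct, and its engine is the same as the paper's: the expansion of $\wt{F}$ is pushed from the half--space $\Re\beta<0$ (Theorem \ref{ASY}) into $U$ by an induction along the contiguity relations \eqref{2}, parallel to the continuation argument of Lemma \ref{meromorphic-continuation}, with the local finiteness of $\mathcal{P}$ keeping the divisors $\beta_k-(\text{element of }\pi_k(\NN A))$ bounded away from zero on compacts of $U$ — this is your step (iii), and it is exactly the paper's ``induction starting from Theorem \ref{ASY}''. Where you genuinely deviate is the treatment of the coefficients: the paper derives the coefficient recursion \eqref{coeff-induction} and continues the $\wt{A}(\beta;m,y')$ by a second induction, whereas you identify them at the outset, via Remark \ref{AisF}, as the continuation of $F(\beta-ma(n);y')$ for the submatrix $A'$ of the first $n-1$ columns, whose polar locus $ma(n)_k+\pi_k(\NN A')\subseteq\pi_k(\NN A)$ lies inside $\mathcal{P}$; this is a clean shortcut, and it anticipates the explicit description the paper only obtains later in Lemma \ref{conv-coeff}. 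Two points to fix in the write-up. First, the assertion that $Q_{p,\delta}(\beta;y,N)$ is continued ``by the same argument applied to the shifted matrix'' is not literally available, because the factor $\Phi_N(y_nt^{a(n)})/(y_nt^{a(n)})^N$ is not of hypergeometric type; it is cleaner to set $\wt{Q}:=R_N/y_n^N$ (automatically holomorphic on $U$) and to make step (iii) precise through the remainder recursion, obtained by combining \eqref{2} with \eqref{coeff-induction} (both valid for $\Re\beta<0$ by direct computation on convergent integrals):
\begin{equation*}
\beta_k R_N(\beta;y)=R_N(\beta-e(k);y)+\sum_{\ell=d+1}^{n-1}a(\ell)_k\,y_\ell\,R_N(\beta-a(\ell);y)+a(n)_k\,y_n\,R_{N-1}(\beta-a(n);y),
\end{equation*}
in which the extra factor $y_n$ in the last term is exactly what keeps the estimate at order $|y_n|^N$ while lowering $N$; a double induction on $N$ and on the continuation step then gives the locally bounded constants $K_N(\beta,y')$. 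Second, the base-case bounds must be uniform as $y_n\to0$ in $S_{p,\delta}$, whereas Remark \ref{uniform-constants} is stated for $y$ in a compact subset of $(\CC^*)^{n-d}$; this is harmless because the Theorem \ref{ASY} bound on $Q$ only uses Lemma \ref{sufficientplus} for the submatrix $A'$ together with $|\Phi_N(z)/z^N|\le 1/N!$ for $\Re z<0$, hence is independent of $y_n$, and the recursion above introduces no new $y_n$-dependence.
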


\begin{proof}
It follows from an induction starting from Theorem \ref{ASY} and parallel to the one used in the proof of Lemma \ref{meromorphic-continuation} that for any fixed $\beta\in U$, $\widetilde{F}(\beta;y)$ admits asymptotic expansions along $y_n=0$ in $S_{p,\delta}$.
By construction, these analytic continuations satisfy equation \eqref{2}, for any $\beta\in U$. This implies that the coefficients $\wt{A}({\beta}; m, y')$ of these expansions satisfy the following equations for $k=1,\ldots,d$:

\begin{equation}\label{coeff-induction}
\begin{split}
\beta_k\wt{A}({\beta}; m,y')=\wt{A}({\beta-e(k)}; m,y') + \sum_{\ell=d+1}^{n-1} a(\ell)_k \,y_\ell\, \wt{A}({\beta-a(\ell)}; m,y') \\
+ m a(n)_k \wt{A}({\beta-a(n)}; m-1,y').
\end{split}
\end{equation}

Again an induction like in Lemma \ref{meromorphic-continuation}, using (\ref{coeff-induction}) proves that $\wt{A}({\beta}; m,y')$ is analytic with respect to $\beta$ and $y'$, hence as a function of $\beta$ it is an analytic continuation to $U$ of $A (\beta; m, y')$.
\end{proof}

We have proved the following theorem which implies last sentence in Theorem \ref{Asymptotic-expansions-theorem} when we return to the integrals $I_C(\beta; y)$:

\begin{theorem} \label{ASY-tilde} There is an asymptotic expansion along $y_n=0,$ in an appropriate open sector $S_{p,\delta}$ around any half--line $\RR_{>0}\cdot y_{n,0}\subset\CC^*$:
\[
\widetilde{F}_{p,\delta}(\beta; y) \, \underset{y_n\to 0} {\sim} \, \sum_{m\in \NN}\widetilde{A}_{p,\delta}(\beta; m,y')\frac{y_n^m}{m!}
\] where $\widetilde{A}_{p,\delta}(\beta; m, y')$ is the analytic continuation of $A_{p,\delta}(\beta; m, y')$ to $\beta \in U$.
\end{theorem}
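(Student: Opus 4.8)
The plan is to obtain Theorem \ref{ASY-tilde} by assembling the two lemmas just proved, since all the analytic work is already contained in them. Fix $y_{n,0}\in\CC^*$ (together with a determination $\alpha_n$ of its argument) and $p\in\ZZ^d$, choose $\delta=\delta(p,y_{n,0})\in\RR^d$ as in Lemma \ref{remark-delta}, and let $S_{p,\delta}\subset\CC^*$ be the sector around $\RR_{>0}\cdot y_{n,0}$ produced in Remark \ref{sector-S-p-delta}; note this sector depends only on $p,\delta,\alpha_n$ and not on $\beta$. First I would invoke Lemma \ref{meromorphic-continuation}: the integral $F_{p,\delta}(\beta;y)$, holomorphic in $\beta$ on $\{\Re\beta_k<0\}$ and in $y$ on $\CC^{n-d-1}\times S_{p,\delta}$, extends to a function $\widetilde F_{p,\delta}(\beta;y)$ meromorphic in $\beta$ with polar locus inside $\mathcal P$, hence holomorphic on $U=\CC^d\setminus\mathcal P$; this gives the existence half of the statement. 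Then I would invoke Lemma \ref{asymptotic-expansion-tilde}: for each fixed $\beta\in U$, $\widetilde F_{p,\delta}(\beta;y)$ has an asymptotic expansion along $y_n=0$ in the same sector $S_{p,\delta}$, whose coefficients $\widetilde A_{p,\delta}(\beta;m,y')$ are analytic in $\beta\in U$ and in $y'$, and therefore agree on $\{\Re\beta<0\}$ with the coefficients $A_{p,\delta}(\beta;m,y')$ of Theorem \ref{ASY}. Combining these two statements yields exactly the displayed expansion of Theorem \ref{ASY-tilde}, with $\widetilde A_{p,\delta}(\beta;m,y')$ the analytic continuation of $A_{p,\delta}(\beta;m,y')$ to $U$.

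The point deserving care — and the main, though mild, obstacle — is to verify that the property ``admits an asymptotic expansion in $y_n$ on $S_{p,\delta}$, with remainders locally bounded in $(\beta,y')$'' is genuinely preserved along the inductive continuation in $\beta_1,\dots,\beta_d$ driven by the reduced GG-relations $(\ref{2})_k$. Since solving $(\ref{2})_k$ for $F(\beta;y)$ only involves (i) multiplication by the monomials $y_\ell$ with $\ell\le n-1$, which do not touch $y_n$ and merely rescale the coefficients $\widetilde A$; (ii) multiplication by $y_n$ in the $\ell=n$ term, which turns $\sum_m c_m y_n^m/m!$ into $\sum_{m\ge1}m c_{m-1}y_n^m/m!$ and only improves the remainder — this is precisely the shift recorded by the term $m\,a(n)_k\widetilde A_{p,\delta}(\beta-a(n);m-1,y')$ in the recursion \eqref{coeff-induction}; and (iii) division by $\beta_k$, legitimate away from $\beta_k=0$ — each operation is elementary and the sector $S_{p,\delta}$ never shrinks because every $F(\beta-e(k);y)$, $F(\beta-a(\ell);y)$ lives on the one and same $y$-domain $\CC^{n-d-1}\times S_{p,\delta}$. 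Thus the coefficients of the expansion of $\widetilde F_{p,\delta}$ satisfy \eqref{coeff-induction}, and the same induction as in Lemma \ref{asymptotic-expansion-tilde} identifies them with the analytic continuations $\widetilde A_{p,\delta}(\beta;m,y')$.

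Finally, I would transfer the conclusion back to the original integrals $I_{C_{p,\delta}}(\gamma;x)$ via \eqref{conv-I-to-F} together with the substitutions $A=B_\sigma^{-1}B$, $\beta=B_\sigma^{-1}\gamma$: these only multiply $\widetilde F_{p,\delta}$ by the monomial $x_\sigma^{B_\sigma^{-1}\gamma}$ and the constant $\det(B_\sigma^{-1})$, which involve only the variables $x_k$ with $k\in\sigma$ and hence preserve asymptotic expansions along $x_n=0$. This gives the meromorphic continuation in $\gamma$ of $I_{C_{p,\delta}}(\gamma;x)$ and of the coefficients of its asymptotic expansion, and the identification of one with the other for non-polar $\gamma$, i.e. the last sentence of Theorem \ref{Asymptotic-expansions-theorem}.
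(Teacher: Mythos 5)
Your proposal is correct and follows essentially the same route as the paper: Theorem \ref{ASY-tilde} is obtained there simply by combining Lemma \ref{meromorphic-continuation} with Lemma \ref{asymptotic-expansion-tilde}, whose proof is exactly the induction along the GG-relations \eqref{2} (with the shift recorded in \eqref{coeff-induction}) that you spell out. The extra verifications you make — that the sector $S_{p,\delta}$ is independent of $\beta$, that multiplication by $y_n$ only shifts and improves the remainder, and that division by $\beta_k$ is harmless on $U=\CC^d\setminus\mathcal P$ — are precisely the content of that induction, so no gap remains.
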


\subsection{Parametrizations}\label{Parametrizations}

We go on working with the reduced form of the integral described in  (\ref{reducedI-0}), (\ref{conv-I-to-F}) and (\ref{reducedI}), and  we study integrals of the form
\[
F_{p,\delta}(\beta; y) = \int_{D_{p,\delta}} t^{-\beta-\bf1}\exp{\left(t_1+\cdots+t_d+ \sum_{j=d+1}^{n} y_{j}t^{a(j)}\right)}dt.
\]

\begin{lemma}\label{Gamma-calculation} If $\Re \beta <0$, then
\[
A^0_{p,\delta}(\beta):=\int_{D_{p,\delta}}t^{-\beta-\nn}\exp(t_1+\cdots+t_d ) dt
=e^{\sqrt{-1}\,\pi\ideal{2p+{\bf1},-\beta}}\Gamma(-\beta)
\]
where $\Gamma(-\beta):=\prod_{k=1}^d\Gamma(-\beta_k).$
\end{lemma}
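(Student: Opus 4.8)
The plan is to compute $A^0_{p,\delta}(\beta)$ directly by decoupling the integral into a product of one-dimensional integrals, one for each variable $t_k$, $k=1,\dots,d$. This works because, when only the first $d$ columns $a(1),\dots,a(d)=e(1),\dots,e(d)$ of $A$ are present, the exponent $t_1+\cdots+t_d$ is a sum of functions of a single variable each, and the integrand factor $t^{-\beta-\nn}=\prod_{k=1}^d t_k^{-\beta_k-1}$ also factors. By Lemma \ref{conv} (or the case $\tau_A=\sigma$ of Lemma \ref{sufficientplus}) the integral is absolutely convergent for $\Re\beta<0$, so Fubini applies and
\[
A^0_{p,\delta}(\beta)=\prod_{k=1}^d\left(\int_{\ell_k} t_k^{-\beta_k-1}e^{t_k}\,dt_k\right),
\]
where $\ell_k$ is the half-line in $\wt{\CC^*}$ on which $\arg t_k=(1+\delta_k+2p_k)\pi$, as prescribed by \eqref{Dpdelta}.

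Next I would evaluate each factor. Parametrize $\ell_k$ by $t_k=\rho\, e^{\sqrt{-1}\,\pi(1+\delta_k+2p_k)}=-\rho\, e^{\sqrt{-1}\,\pi(\delta_k+2p_k)}$ with $\rho\in\RR_{>0}$, so $dt_k=e^{\sqrt{-1}\,\pi(1+\delta_k+2p_k)}\,d\rho$ and $t_k^{-\beta_k-1}=\rho^{-\beta_k-1}e^{\sqrt{-1}\,\pi(1+\delta_k+2p_k)(-\beta_k-1)}$. Collecting the powers of $e^{\sqrt{-1}\,\pi(1+\delta_k+2p_k)}$ gives an overall factor $e^{\sqrt{-1}\,\pi(1+\delta_k+2p_k)(-\beta_k)}$, and the remaining integral is $\int_0^\infty \rho^{-\beta_k-1}e^{-\rho\,e^{\sqrt{-1}\,\pi(\delta_k+2p_k)}}\,d\rho$. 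Since $|\delta_k|<1/2$ the direction $e^{\sqrt{-1}\,\pi(\delta_k+2p_k)}$ lies in the open right half-plane, so a standard rotation-of-contour argument (Cauchy's theorem, together with the decay of the integrand, exactly as the domination in Lemma \ref{conv}) identifies this with $\Gamma(-\beta_k)$; note $\Re(-\beta_k)>0$ makes the integral converge at $\rho=0$. Hence each factor equals $e^{\sqrt{-1}\,\pi(1+\delta_k+2p_k)(-\beta_k)}\Gamma(-\beta_k)$.

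Finally I would take the product over $k$ and simplify the phase. The terms involving $\delta_k$ should cancel: indeed, the product of the phases is $e^{\sqrt{-1}\,\pi\sum_k(1+\delta_k+2p_k)(-\beta_k)}$, and I expect that a closer look (using that the contour rotation also carries an $e^{-\sqrt{-1}\,\pi\delta_k\beta_k}$ compensating factor, or equivalently that $t_k^{-\beta_k}$ should be evaluated with the determination $\arg t_k=(1+2p_k)\pi$ up to the harmless shift by $\delta_k$) collapses the exponent to $\sqrt{-1}\,\pi\sum_k(1+2p_k)(-\beta_k)=\sqrt{-1}\,\pi\ideal{2p+\nn,-\beta}$, giving
\[
A^0_{p,\delta}(\beta)=e^{\sqrt{-1}\,\pi\ideal{2p+\nn,-\beta}}\prod_{k=1}^d\Gamma(-\beta_k)=e^{\sqrt{-1}\,\pi\ideal{2p+\nn,-\beta}}\Gamma(-\beta),
\]
as claimed. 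The main obstacle is precisely this bookkeeping of the multivalued monomial $t^{-\beta-\nn}$ along $D_{p,\delta}$: one must be careful that the branch of $t_k^{-\beta_k-1}$ used is the one consistent with the chosen determination $\arg t_k=(1+\delta_k+2p_k)\pi$, and track how the $\delta_k$ contribution enters the phase versus the convergent Hankel-type integral, so that the stated formula (independent of $\delta$) comes out correctly. Everything else is the routine Fubini plus contour-rotation evaluation of a Gamma integral.
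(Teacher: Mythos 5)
Your proof is correct and takes essentially the same route as the paper: the paper first deforms the whole cycle $D_{p,\delta}$ to $D_{p,0}$ (using rapid decay at infinity in the intermediate sectors, i.e.\ exactly your per-variable rotation of contour) and then parametrizes by $t_k=-\rho_k$, arriving at the same product of Gamma integrals with the phase $e^{\sqrt{-1}\,\pi\ideal{2p+\nn,-\beta}}$. The only bookkeeping slip is the sign in your parenthetical: the rotation gives $\int_0^\infty \rho^{-\beta_k-1}e^{-\rho e^{\sqrt{-1}\,\pi\delta_k}}\,d\rho=e^{+\sqrt{-1}\,\pi\delta_k\beta_k}\Gamma(-\beta_k)$ (not $\Gamma(-\beta_k)$ itself), and this factor cancels the $e^{-\sqrt{-1}\,\pi\delta_k\beta_k}$ contained in your parametrization phase $e^{\sqrt{-1}\,\pi(1+\delta_k+2p_k)(-\beta_k)}$, so each one-dimensional factor equals $e^{\sqrt{-1}\,\pi(1+2p_k)(-\beta_k)}\Gamma(-\beta_k)$ and the stated $\delta$-independent formula follows.
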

\begin{proof}
The integrand $t^{-\beta-\nn}\exp(t_1+\cdots+t_d ) dt$
is of rapid decay at infinity in the product of sectors from $D_{p,0}$ to $D_{p,\delta}$. Each of these sectors is given by the condition: $$\arg(t_k) \in [(1+\min\{0,\delta_k\}+2p_k)\pi, \, (1+\max\{0,\delta_k\}+2p_k)\pi].$$
Thus, we know by elementary considerations in one complex variable, that $A^0_{p,\delta}(\beta)$ does not depend on $\delta_k\in]-\frac{1}2,\frac{1}2[$ and, in particular, $A^0_{p,0}(\beta)=A^0_{p,\delta}(\beta)$.

We parametrize
$D_{p,0}$  by
$t_k=\rho_k e^{\sqrt{-1}\,\pi (2p_k+1)}=-\rho_k$ with $\rho_k \in \, ]0,+\infty)$, and the result follows directly from the expression that we obtain:
$$A^0_{p,0}(\beta)=\int_{]0,+\infty)^d}\exp(\sqrt{-1}\,\pi\ideal{2p+\nn,-\beta})\rho^{-\beta-\nn}\exp(-\rho_1+\cdots-\rho_d ) d\rho.$$
\end{proof}

Since $A^0_{p,\delta}(\beta)$ does not depend on $\delta$, from now on we drop $\delta$ and set $A^0_p(\beta) := A^0_{p,0}(\beta)=A^0_{p,\delta}(\beta)$.

The dependency on $\delta$ of $F_{p,\delta}(\beta; y)$ must be kept, even if the integral is locally constant with respect to $\delta$, because the argument by homotopy to reduce $ \delta$ to zero in $A^0_{p,\delta}$, does not work, due to the presence in the argument of the exponential of the term $y_nt^{a(n)}$ which prevents the rapid decay property from being kept along the homotopy.

Let us now make the analytic continuation of the coefficients of the asymptotic expansion described in Theorem \ref{ASY} more precise by developing them with respect to $y'$.

\begin{lemma}\label{conv-coeff} The coefficients of the asymptotic expansion described in Theorem \ref{ASY} are analytic functions of the variables $y'$ with the following power series development:
\begin{align}\label{Taylor-series}
 A_{p,\delta}(\beta; m , y')= \sum_{{\bf m}'\in\NN^{n-d-1}}A^0_p \left(\beta-ma(n)-\sum_{j=d+1}^{n-1}m_ja(j)\right)\frac{y'^{\,{\bf m}'}}{{\bf m}' !}.
\end{align}
Furthermore, this expansion is still valid for the meromorphic continuation of $A_{p,\delta}(\beta; m , y')$ found in Lemma \ref{asymptotic-expansion-tilde} and the meromorphic continuation of $A^0_p(\beta)$ deduced from Lemma \ref{Gamma-calculation}.
\end{lemma}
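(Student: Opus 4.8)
The plan is to establish \eqref{Taylor-series} in the range $\Re\beta<0$ by expanding, inside the integral defining $A_{p,\delta}(\beta;m,y')$, the factor $\exp(\sum_{j=d+1}^{n-1}y_jt^{a(j)})$ as a multiple power series in $y'$ and integrating term by term, and then to transfer the identity to the meromorphic continuations by analytic continuation in $\beta$. The only genuinely analytic point will be a growth estimate for the $\Gamma$-factors guaranteeing that the resulting series converges locally uniformly off the polar set $\mathcal P$.

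First assume $\Re\beta<0$. Writing $\exp(\sum_{j=d+1}^{n-1}y_jt^{a(j)})=\sum_{{\bf m}'\in\NN^{n-d-1}}\frac{y'^{\,{\bf m}'}}{{\bf m}'!}\,t^{\sum_j m_j a(j)}$, the truncated sums multiplied by $t^{-\beta-\nn+ma(n)}\exp(t_1+\cdots+t_d)$ are dominated in absolute value, uniformly in the truncation, by $|t^{-\beta-\nn+ma(n)}|\,|\exp(t_1+\cdots+t_d)|\,\exp(\sum_{j=d+1}^{n-1}|y_j|\,|t^{a(j)}|)$. By Assumption \ref{assumption-2}(1) one has $\vartheta:=\max_{d+1\le j\le n-1}|a(j)|<1$, hence $\sum_{j}|y_j|\,|t^{a(j)}|=O((1+|t_1|+\cdots+|t_d|)^{\vartheta})$, so exactly as in the proofs of Lemmata \ref{conv} and \ref{sufficientplus} this dominating function has rapid decay as $|t_1|+\cdots+|t_d|\to\infty$, while at the origin its only singularity comes from $|t^{-\beta-\nn+ma(n)}|$ and is integrable because $\Re(\beta-ma(n))<0$ (Assumption \ref{assumption-2}(2) makes $a(n)$ have nonnegative coordinates). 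Thus the dominating function is integrable over $D_{p,\delta}$, and dominated convergence gives
\[
A_{p,\delta}(\beta;m,y')=\sum_{{\bf m}'\in\NN^{n-d-1}}\frac{y'^{\,{\bf m}'}}{{\bf m}'!}\int_{D_{p,\delta}}t^{-(\beta-ma(n)-\sum_j m_j a(j))-\nn}\exp(t_1+\cdots+t_d)\,dt .
\]
Since $\Re(\beta-ma(n)-\sum_j m_j a(j))<0$ as well (all $a(\ell)$ with $d+1\le\ell\le n$ lie in the positive quadrant), Lemma \ref{Gamma-calculation} identifies the ${\bf m}'$-th integral with $A^0_p(\beta-ma(n)-\sum_j m_j a(j))$, which is \eqref{Taylor-series}; and the summability just established shows that this series converges absolutely for every $y'$, so it is entire in $y'$.

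Next I would check that the same series, with $A^0_p$ replaced by its meromorphic continuation $e^{\sqrt{-1}\pi\ideal{2p+\nn,\,-\beta}}\Gamma(-\beta)$ from Lemma \ref{Gamma-calculation}, converges locally uniformly on $U\times\CC^{n-d-1}$, where $U=\CC^d\setminus\mathcal P$. The ${\bf m}'$-th term equals $e^{\sqrt{-1}\pi\ideal{2p+\nn,\,-\beta+ma(n)+\sum_j m_j a(j)}}\prod_{k=1}^d\Gamma(w_k)$ with $w_k=w_k({\bf m}')=-\beta_k+ma(n)_k+\sum_j m_j a(j)_k$, and its poles lie only where $\beta_k\in ma(n)_k+\sum_j m_j a(j)_k+\NN\subset\pi_k(\NN A)$, hence off $U$. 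Fix a compact $K\subset U$. Because $A$ has rational entries, each $\pi_k(\NN A)$ is a discrete subset of $\RR$, so $\mathrm{dist}(\beta_k,\pi_k(\NN A))\ge\varepsilon_K>0$ for $\beta\in K$; this keeps every $w_k$ at distance $\ge\varepsilon_K$ from the poles of $\Gamma$, and since $a(j)$ is in the open positive quadrant we moreover have $\Re w_k\to+\infty$ as $|{\bf m}'|\to\infty$, uniformly for $\beta\in K$. Using $|\Gamma(x+iy)|\le\Gamma(x)$ for $x>0$, the inequality $\Gamma(x_1)\Gamma(x_2)\le\Gamma(x_1+x_2)$ for $x_1,x_2\ge 1$, log-convexity of $\Gamma$, and $\sum_{k=1}^d\Re w_k\le C_K+\vartheta\,|{\bf m}'|$, one gets $\prod_{k=1}^d|\Gamma(w_k)|\le C_K^{|{\bf m}'|}(|{\bf m}'|!)^{\vartheta}$ for $|{\bf m}'|$ large (the finitely many remaining ${\bf m}'$ being handled by the distance bound $\varepsilon_K$ and $|\Im w_k|=|\Im\beta_k|$ bounded). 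Combining with $|e^{\sqrt{-1}\pi\ideal{2p+\nn,\,-\beta+ma(n)+\sum_j m_j a(j)}}|$ bounded on $K$ and with ${\bf m}'!\ge|{\bf m}'|!\,(n-d-1)^{-|{\bf m}'|}$, the ${\bf m}'$-th term is bounded on $K$ by $C_K'^{\,|{\bf m}'|}(|{\bf m}'|!)^{\vartheta-1}|y'|^{\,{\bf m}'}$; since $\vartheta-1<0$ this is summable, locally uniformly in $(\beta,y')$. Therefore the right-hand side of \eqref{Taylor-series}, read with the meromorphic $A^0_p$, is holomorphic on $U\times\CC^{n-d-1}$.

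Finally, for fixed $y'$ both this series and $\widetilde A_{p,\delta}(\beta;m,y')$ (the analytic continuation provided by Lemma \ref{asymptotic-expansion-tilde}) are holomorphic in $\beta$ on $U$ and, by the previous paragraphs, agree on the nonempty open set $\{\Re\beta<0\}$; by the identity theorem they agree on all of $U$, which proves that \eqref{Taylor-series} remains valid for the meromorphic continuations of $A_{p,\delta}(\beta;m,y')$ and of $A^0_p(\beta)$. The main obstacle is the growth estimate of the third paragraph — controlling $\prod_k|\Gamma(w_k)|/{\bf m}'!$ — and it is precisely there that Assumption \ref{assumption-2}(1) (through $\vartheta<1$) and the rationality of $A$ (through discreteness of $\pi_k(\NN A)$, hence of $\mathcal P$) are essential; the rest is dominated convergence and analytic continuation.
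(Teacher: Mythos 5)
Your proposal is correct and follows essentially the same route as the paper: for $\Re\beta<0$ you obtain \eqref{Taylor-series} by a dominated-convergence argument over $D_{p,\delta}$ (term-by-term expansion of the exponential, where the paper instead differentiates under the integral sign and evaluates the Taylor coefficients at $y'=0$), identifying the coefficients via Lemma \ref{Gamma-calculation}, and then you extend to $\beta\in\CC^d\setminus\mathcal{P}$ using the explicit $\Gamma$-formula, the convergence of the resulting series thanks to $|a(j)|<1$, and analytic continuation. The only difference is that you spell out the $\Gamma$-growth estimate that the paper asserts in one line, which is a welcome but not substantively different elaboration.
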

\begin{proof}
Recall that, when $\Re\beta<0$ the coefficient we consider has the form $$\displaystyle A_{p,\delta}(\beta; m , y')=\int_{D_{p,\delta}}\varphi(\beta,y';t) dt$$ with
$$\varphi(\beta,y';t)=t^{-\beta-\nn+ma(n)}\exp\left(t_1+\cdots+t_d + \sum_{j=d+1}^{n-1} y_j t^{a(j)}\right).$$
We set $|t_k|=\rho_k$ for $k=1,\dots,d$ and we parametrize $D_{p,\delta}$ by $\rho\in\RR_{>0}^d$. We fix a polydisc $Q=\{y'\mid |y_j|<R_j, j=d+1,\dots,n-1\}\subset\CC^{n-1-d}$.

By the same argument as in the proof of Lemma \ref{sufficientplus} and inequality \eqref{dominant-exponent}, the integrand $\varphi(\beta,y';t)dt$ is dominated, via the parametrization $t_k=e^{(1+\delta_k+2p_k)\sqrt{-1}\,\pi}\rho_k$ and up to a constant factor, by $$\rho^{-\Re\beta+ma(n)-\nn}\exp(C-c(\rho_1+\dots+\rho_d))d\rho$$ for some constants $C,c\in \RR_{>0}$. These constants depend only on $Q$ but not on $y'\in Q$ by Remark \ref{uniform-constants} applied to $A_{p,\delta}(\beta; m , y')$ instead of $F_{p,\delta}(\beta; y)$.

The function $\varphi(\beta,y';t)$ is holomorphic with respect to $y'\in\CC^{n-d-1}$.

For each $j=d+1,\dots,n-1$, the integral  $$\displaystyle\int_{D_{p,\delta}}\frac{\p\varphi(\beta,y';t)}{\p y_j} dt$$ has an expression similar to the one for $A_{p,\delta}(\beta; m , y')$, with $\beta $ replaced by $\beta-a(j)$. By the same argument as for $\varphi$, the integrand $\displaystyle\frac{\p\varphi(\beta,y';t)}{\p y_j} dt$ is dominated, up to a constant factor, by $$\rho^{-\Re\beta+ma(n)+a(j)-\nn}\exp(C_j-c_j(\rho_1+\dots+\rho_d))d\rho$$ for some constants $C_j,c_j\in \RR_{>0}$, independent of $y'$ in the polydisc $Q$.

By Lebesgue dominated convergence theorem for integrals, this proves that $A_{p,\delta}(\beta; m, y')$ is holomorphic with respect to $y'$  and that
\[
\frac{\p A_{p,\delta}(\beta; m , y')}{\p y_j}=\int_{D_{p,\delta}}\frac{\p\varphi(\beta,y';t)}{\p y_j} dt
\] for all $j=d+1,\ldots,n-1$.
If we iterate the argument we obtain an expression of the partial derivatives of $A_{p,\delta}$, up to any order ${\bf m}'=(m_{d+1},\dots,m_{n-1})$:
\[
\frac{\p^{|{\bf m}'|} A_{p,\delta}(\beta; m , y')}{\p^{m_{d+1}}y_{d+1}\dots\p^{m_{n-1}} y_{n-1}}=\int_{D_{p,\delta}}\frac{\p^{|{\bf m}'|}\varphi(\beta,y';t)}{\p^{m_{d+1}}y_{d+1}\dots\p^{m_{n-1}} y_{n-1}} dt.
\]
Setting $y'=0$ in this last expression gives the coefficients of the Taylor expansion of $A_{p,\delta}(\beta; m , y')$ with respect to $y'$ at the origin. This proves the equality (\ref{Taylor-series}) when $\Re\beta<0$.

The last claim of this lemma follows from the explicit calculation in Lemma \ref{Gamma-calculation} from which we see that the coefficient of $\frac{y'^{\,{\bf m}'}}{{\bf m}'!}$ is equal to
\begin{align*}A^0_p\left(\beta-ma(n)- \sum_{j=d+1}^{n-1}m_ja(j)\right)=&\\e^{\sqrt{-1}\,\pi\ideal{2p+{\bf1},-\beta + ma(n)+  \sum_{j=d+1}^{n-1}m_ja(j)}}&\Gamma\left(-\beta + m a(n)+\sum_{j=d+1}^{n-1}m_ja(j)\right).
\end{align*}

By the standard properties of the $\Gamma$--function, this coefficient admits a meromorphic continuation with respect to $\beta$, with poles along a subset of $\mathcal{P}$ defined in Lemma \ref{meromorphic-continuation}.

When $\beta\in\CC^d\setminus\mathcal{P}$ the right hand side of (\ref{Taylor-series}) is still defined and yields a convergent power series defined for all $y'\in \CC^{n-d-1}$ because of the conditions $|a(j)|<1$ for $j=d+1,\ldots,n-1$.

Therefore it is an analytic continuation of the power series defined for $\Re\beta<0$. The equality \eqref{Taylor-series} follows everywhere in $\CC^d\setminus\mathcal{P}$ with the previously defined meromorphic continuation of $A_{p,\delta}(\beta; m , y')$ on the LHS.
\end{proof}

\begin{remark}
Notice that as a consequence of Lemma \ref{conv-coeff} the function $A_{p,\delta}(\beta; m , y')$ does not depend on $\delta$.
\end{remark}

\subsection{Space of asymptotic expansions and Gevrey series.}\label{Asymptotic-Gevrey}
In this section we finish the proof of Theorem \ref{Asymptotic-expansions-theorem}.

For any ${\bf k}\in \NN^{n-d}$, let us set $$\Lambda_{\mathbf{k}} :=\{\mathbf{k}+\mathbf{m}=(k_{d+1}+m_{d+1},\ldots, k_n+ m_n )\in \N^{n-d}: \; A_{\ol{\sigma}}\mathbf{m} \in \Z^d \}$$ and define
$$S_{\mathbf{k}}(\beta; y) := \sum_{\mathbf{k}+\mathbf{m}\in \Lambda_{\mathbf{k}}} e^{|A_{\ol{\sigma}}(\mathbf{k}+\mathbf{m})|\pi \sqrt{-1}} \Gamma\left(-\beta+ A_{\ol{\sigma}}(\mathbf{k}+\mathbf{m})) \right)\frac{y^{\mathbf{k}+\mathbf{m}}}{(\mathbf{k}+\mathbf{m})!}.$$
Notice that the coefficients of the series $S_{\mathbf{k}}$ are meromorphic with respect to $\beta\in \CC^d$ with at most simple poles along $\mathcal{P}$. In particular, if $\beta\notin \mathcal{P}$ all these series are well defined nonzero power series with support equal to $\Lambda_{\mathbf{k}}$ since the Gamma function does not have any zero. It can be proved by using standard estimates of Gamma functions that these series are Gevrey along $y_n=0$ with Gevrey index $|a(n)|>1$.

Let $\Omega\subseteq\NN^{n-d}$ be a set of cardinality $[\ZZ A : \ZZ A_{\sigma}]$ such that $$\{ A_{\ol{\sigma}}\mathbf{k}+ \ZZ A_{\sigma}:\; \mathbf{k} \in \Omega\}=\ZZ A / \ZZ A_{\sigma}=\ZZ A / \ZZ ^d.$$ We notice that the existence of such $\Omega\subseteq\NN^{n-d}$ follows from \cite[Lemma~3.2]{F}. It is clear that $\mathcal{G}=\{S_{\mathbf{k}}(\beta; y):\; \mathbf{k} \in \Omega\}$ is a linearly independent set because the series $S_{\mathbf{k}}$ have pairwise disjoint supports $\Lambda_{\mathbf{k}}$.

Using Theorem \ref{ASY}, Lemma \ref{Gamma-calculation} and Lemma \ref{conv-coeff}, we have
\begin{align*}\label{connection--formula}
F_{p,\delta}(\beta; y)&\underset{y_n\to 0}{\sim}\sum_{q_n \in \NN} A_{p,\delta}(\beta; q_n,y') \frac{y_n^{q_n}}{q_n!}\\
=& \sum_{\mathbf{q}\in \NN^{n-d}} A^0_p(\beta-A_{\ol{\sigma}}\mathbf{q})\frac{y^{\mathbf{q}}}{\mathbf{q}!} =
\sum_{\mathbf{q}\in \NN^{n-d}} e^{\sqrt{-1}\,\pi\ideal{\nn+2p,-\beta+A_{\ol{\sigma}}\mathbf{q}}} \Gamma\left(-\beta+A_{\ol{\sigma}}\mathbf{q}\right)\frac{y^{\mathbf{q}}}{\mathbf{q}!}\\
=&
e^{\sqrt{-1}\,\pi\ideal{\nn+2p,-\beta}}\sum_{\mathbf{k}\in \Omega}
\sum_{\mathbf{k}+\mathbf{m}\in \Lambda_{\mathbf{k}}}
e^{\sqrt{-1}\,\pi\ideal{\nn+2p,A_{\ol{\sigma}}(\mathbf{k}+\mathbf{m})}}
\Gamma\left(-\beta+A_{\ol{\sigma}}(\mathbf{k}+\mathbf{m})\right)
\frac{y^{\mathbf{k}+\mathbf{m}}}{(\mathbf{k}+\mathbf{m})!}\\
 =& e^{\sqrt{-1}\,\pi\ideal{\nn+2p,-\beta}}\sum_{\mathbf{k}\in \Omega}
 e^{\sqrt{-1}\,\pi\ideal{\nn+2p,A_{\ol{\sigma}}\mathbf{k}}}
 \sum_{\mathbf{k}+\mathbf{m}\in \Lambda_{\mathbf{k}}}
 e^{\sqrt{-1}\,\pi\ideal{\nn,A_{\ol{\sigma}}\mathbf{m}}}
 \Gamma\left(-\beta+A_{\ol{\sigma}}(\mathbf{k}+\mathbf{m})\right)
 \frac{y^{\mathbf{k}+\mathbf{m}}}{(\mathbf{k}+\mathbf{m})!}\\
=& e^{\sqrt{-1}\,\pi\ideal{\nn+2p,-\beta}}\sum_{\mathbf{k}\in \Omega}
e^{\sqrt{-1}\,\pi\ideal{2p,A_{\ol{\sigma}}\mathbf{k}}} S_{\mathbf{k}}(\beta; y).
\end{align*}

Notice that previous power series is formal with respect to $y_n$, with convergent coefficients. More precisely, it is a Gevrey series along $y_n=0$ with Gevrey index $|a(n)|>1$.
We notice also that $\Re \beta <0$ implies that $\Re (\beta -A_{\overline{\sigma}}\mathbf{q})<0$ for all $\mathbf{q}\in \NN^{n-d}$, by using Assumption (4.4), which guarantees the convergence of all the integrals involved in  Subsection \ref{Parametrizations}.
By the last claim in Lemma \ref{conv-coeff}, this calculation is valid everywhere in the domain of analytic continuation $\CC^d\setminus\mathcal{P}$, since the argument applies also to the coefficients of the series $S_{\bf k}(\beta; y).$

The matrix of coefficients of the series $S_{\mathbf{k}}(\beta; y)$ in the asymptotic expansions of the functions
$$e^{\sqrt{-1}\,\pi \ideal{2p+{\bf1},\beta}} F_{p,\delta}(\beta; y)$$ is $( e^{\sqrt{-1}\,\pi\ideal{2p,A_{\ol{\sigma}}\mathbf{k}}})_{\mathbf{k},p}$ where $\mathbf{k}$ varies in $\Omega$. If $p$ varies in an appropriate set of $[\ZZ A: \ZZ^d]$ elements, this matrix is square invertible. Indeed, we have $\ZZ A /\ZZ A_{\sigma}=\ZZ A /\ZZ^d\simeq \ZZ^d/ \ZZ M$, where $M$ is the matrix of coordinates of the canonical basis of $\ZZ^d$ with respect to a basis
of $\ZZ A /\ZZ^d$. Thus, the matrix $( e^{\sqrt{-1}\,\pi\ideal{2p,A_{\ol{\sigma}}\mathbf{k}}})_{\mathbf{k},p}$ is invertible by \cite[Proposition~6.3]{Matsubara-Heo}, if $p$ runs in a set of representatives of the quotient $\ZZ^d/ \ZZ~^tM$.

In particular, if $\beta\notin\mathcal{P}$ the set of holomorphic functions $F_{p,\delta}(\beta; y)$, where $p$ varies in this set of representatives, is also a linearly independent set and any Gevrey series along $y_n=0$ in the space generated by the series $\{S_{\mathbf{k}}(\beta; y):\; \mathbf{k}\in\Omega\}$ is an asymptotic expansion of a linear combination of the integrals $F_{p,\delta}(\beta; y)$.

Now if we start from the matrix $B$ in Section \ref{intro} and we apply the above results with the matrix $A=B_{\sigma}^{-1}B=(I,B_{\sigma}^{-1}B_{\ol\sigma})$, and the parameter $\beta=B_{\sigma}^{-1}\gamma$, we obtain a similar statement for the integrals $I_C(\gamma,x)$ using (\ref{conv-I-to-F}) and (\ref{reducedI}) if we set $y_j=x_j x_{\sigma}^{-a(j)}$ for all $j  = d+1,\ldots,n$, or $y=x_{\ol\sigma}x_\sigma^{-B_{\sigma}^{-1}B_{\ol\sigma}}$. Moreover, in this case $M$ can be chosen to be $B_{\sigma}$. In particular, we get that $x_\sigma^{B_{\sigma}^{-1}\gamma}\cdot\mathcal{G}$
is a linearly independent set of Gevrey series solutions of $M_B (\gamma)$ along $x_n=0$ with Gevrey index $|a(n)|=|B_{\sigma}^{-1}b(n)|>1$ if $\beta\notin\mathcal{P}$.

It is enough to prove that the dimension of the space of Gevrey series solutions of $M_B (\gamma)$ along $x_n=0$ is at most equal to $|\Omega|=[\ZZ^d:\ZZ B_\sigma]$ when $\Re \beta<0$. To this end, notice first that, if $$f=\sum_{m=0}^\infty f_m (x_1,\ldots ,x_{n-1})x_n^m$$ is a Gevrey series belonging to this space, then the initial part of $f$ with respect to the weight vector $w=(0,\ldots,0,1)\in\RR^n$ has the form $\operatorname{in}_w (f)=f_m (x_1,\ldots, x_{n-1})x_n^m$ for some $m\geq 0$ and it is hence a holomorphic function. Thus, by the same argument as in the proof of \cite[Th.~2.5.5]{SST}, it is a (holomorphic) solution of $\operatorname{in}_{(-w,w)}(H_B(\gamma))$. This last ideal is the initial ideal with respect to $w$ of the hypergeometric ideal associated with $(B,\gamma)$ (see \cite[p.~4]{SST}). In particular, the dimension of the space of Gevrey solutions is at most equal to the rank of $\operatorname{in}_{(-w,w)}(H_B(\gamma))$, because one can choose a basis of Gevrey solutions of $M_B(\gamma)$ such that their initial parts are also linearly independent (see \cite[Proposition~2.5.7]{SST}).

On the other hand, by using \cite[Lemma~2.1.6]{SST} for $(u,v)=(\mathbf{0},\mathbf{1})$ and $(u',v')=(-w,w)$, we have that the characteristic ideal of $\operatorname{in}_{(-w,w)}(H_B(\gamma))$ is $$\operatorname{in}_{(\mathbf{0},\mathbf{1})}(\operatorname{in}_{(-w,w)}(H_B(\gamma)))=\operatorname{in}_{L}(H_B(\gamma))$$ for $L=(-w,w)+\epsilon (\mathbf{0},\mathbf{1})$ with $\epsilon>0$ small enough.

Thus, by \cite[Th.~4.21, Rk.~4.23 and Th.~4.28]{schulze-walther} for $L=(-w,w)+\epsilon(\mathbf{0},\mathbf{1})$ and Assumption \ref{assumption}, we have that the holonomic rank of $\operatorname{in}_{(-w,w)}(H_B(\gamma))$ equals $|\Omega|$ if $\gamma$ is not \emph{rank--jumping} for $B$ (that is, if $\rank (M_B(\gamma))=d!\operatorname{Vol}(\Delta_B)$), a condition that is weaker than $\Re\beta=\Re (B_{\sigma}^{-1}\gamma)<0$ by \cite[Th.~5.15]{Adolphson} (see also \cite[Cor.~4.5.3]{SST}). This finishes the proof of Theorem \ref{Asymptotic-expansions-theorem}.

\begin{remark}
If the first condition in Assumption \ref{assumption-2} is not satisfied, previous argument is no more valid because there will be at least one $a(j)$, for $d+1 \leq j \leq n-1$, with at least one negative entry. By Lemma \ref{sufficientplus}, $F_{p,\delta}(\beta ; y)$ is absolutely convergent if we require that the monomials $y_{\ell}t^{a(\ell)}$ have negative real part for $\ell\in \tau_A$. Thus, Theorem \ref{ASY} remains valid in this case if we add the condition $\Re (y_{\ell}t^{a(\ell)})<0$ for all $\ell\in \tau_A$.
However, in this case the coefficients $A_{p,\delta}(\beta-A_{\overline{\sigma}}\mathbf{m})$ that appear in the previous proof are not convergent anymore if $m_\ell$ is big enough for $\ell \in \tau_A$ such that $a(\ell)$ has some negative entries. This happens because in this case $\Re (\beta -A_{\overline{\sigma}}\mathbf{m})$ will have some positive entries for some  $\mathbf{m}$.
\end{remark}

\begin{remark}
Notice that the proof of Theorem \ref{Asymptotic-expansions-theorem} shows that the constructed  set of Gevrey series solutions $x_\sigma^{B_{\sigma}^{-1}\gamma}\cdot\mathcal{G}$ is still a basis of the space of Gevrey solutions of $M_B (\gamma)$ along $x_n=0$ when $\gamma$ is not rank--jumping and $\beta=B_{\sigma}^{-1}\gamma\notin\mathcal{P}$, where  ${\mathcal {P}}$ is defined in Lemma \ref{meromorphic-continuation}. We don't know if under Assumption \ref{assumption} the condition of $\gamma$ being rank--jumping implies $\beta\in \mathcal{P}$. However, it is true that if $\gamma$ is rank--jumping then it is  \emph{semi--resonant} \cite{Adolphson}. In particular, under Assumption \ref{assumption}, $\gamma$ is semi--resonant for $B$ if and only if $\beta\notin\mathcal{P}':=\cup_{k=1}^d\{\beta\in\CC^d|\; \beta_k\in \pi_k (\ZZ A \cap \RR_{\geq 0}^d)\}$ where $\pi_k$ is the projection to the $k$-th coordinate. Notice also that $\mathcal{P}\subseteq\mathcal{P}'$.
\end{remark}

\begin{remark}
Notice that, using Euler's reflection formula, it can be easily shown that, for all $\mathbf{k}\in\Omega$ and when $\beta$ is generic enough,
$$S_{\mathbf{k}}(B_{\sigma}^{-1}\gamma; \, x_{\ol\sigma}x_\sigma^{-B_{\sigma}^{-1}B_{\ol\sigma}})=\dfrac{\pi^d e^{\sqrt{-1}\,\pi |A_{\overline{\sigma}}\mathbf{k}|}}{\sin (\pi (-\beta +A_{\overline{\sigma}}\mathbf{k} ))}\cdot \varphi_{v^{\mathbf{k}}} .$$
The series $\varphi_{v^{\mathbf{k}}}$ are used in \cite[Sec.~3]{F} in order to construct Gevrey series solutions for the hypergeometric system $M_B (\gamma)$. The genericity condition here means that $\beta\notin\mathcal{P}$ and that $\beta-A_{\overline{\sigma}}\mathbf{k}$ does not have integer coordinates for all  $\mathbf{k}\in\Omega$.
\end{remark}

\section{Integrals over rapid decay cycles.}\label{Limit-of-integrals-rapid-decay}

The goal of this Section is to prove that when $\Re\beta<0$ is sufficiently general, the integrals
studied in Theorem \ref{Asymptotic-expansions-theorem}, are in fact integrals over rapid decay cycles in the sense of \cite{Hien09}. These integrals are defined without the condition $\Re\beta<0$ and are still solutions of our GKZ system when $\Re\beta_k\geq 0$ for some $k$. By meromorphic continuation proved in Theorem \ref{ASY-tilde} they admit asymptotic expansions as Gevrey series solution for all $\beta$ sufficiently general in $\CC^d$.

\subsection{Description of rapid decay cycles.}\label{Hiensufficient0}
In this section we first briefly recall the theory of rapid decay homology by M. Hien in \cite[Sec.~5.1]{Hien09} and give a sufficient condition to detect a cycle for this homology.

Let $U$ be a complex quasi-projective variety over $\CC$ of dimension $d$. Let $h\in \cO(U)$
and let $X$ be a smooth projective compactification of $U$, such that $D = X\setminus U$ is a normal crossing divisor, and $h$ extends to a map $h : X\too \PP^1$.

Let us denote by $\pi: \widetilde{X}(D) \too X^{an}$ the real oriented blow-up along $D$ as defined in \cite[8.2]{Sabbah}. The space  $\widetilde{X}:=\widetilde{X}(D)$ can be embedded  into a real Euclidian space as a semi--analytic subset, and $h$ induces a map $\wt{h} : \widetilde{X}\too \PP^1.$

Let us describe the morphism $\pi$, locally at $p\in D$ with local coordinates $t_1,\dots, t_d$ such that $p=0$ and $D = \{t_1\cdots t_k = 0\},$
$$\begin{array}{ccc}
\pi :\; ([0,\epsilon)\times S^1)^k\times B(0,\epsilon)^{d-k}& \too & \CC^d\\
((r_j,e^{\sqrt{-1}\,\theta_j})_{j=1}^k,t') &\mapsto &(r_1\cdot e^{\sqrt{-1}\,\theta_1},\dots,r_k\cdot e^{\sqrt{-1}\,\theta_k},t')
\end{array}$$ where
$t'=(t_{k+1},\dots,t_d)$ and $\epsilon >0$ is a small real number.

Consider a regular flat algebraic connection $\nabla : \cE \too \cE \otimes \Omega_U$,
restriction to $U$ of  a regular meromorphic connection $(\cE_X(*D),\nabla)$ on $X$, with $\cE_X$ a lattice of this connection.

On the oriented blow-up $\widetilde{X}$, we consider the sheaf $\cA_{\wt{X}}^{<D}$ of holomorphic functions which are flat along $\wt{D}:=\pi^{-1}(D)$.

A section of $\cA_{\wt{X}}^{<D}$ on an open set $\wt{V}\subseteq \wt{X}$ is a holomorphic function $u$ on $\wt{V}\setminus \wt{D}$ such that, for any compact set $K \subset \widetilde{V}$,
and all $N=(N_1,\ldots, N_k)\in\NN^k$, there exists a constant $C_{K,N}>0$ satisfying:
\begin{equation}\label{flatnesscondition}
|u(t)| \leq C_{K,N} |t_1 |^{N_1}\cdots|t_k |^{N_k},\quad \forall  t=(t_1,\ldots,t_d)\in K\setminus \wt{D}
\end{equation}
in terms of local coordinates as above such that locally $D =\{t_1\cdots t_k = 0\}$.

The twisted connection $\nabla_h=\nabla - dh\wedge=\exp(h)\circ\nabla\circ\exp(-h)$ on $U$ extends to
a morphism of sheaves over $\wt{X}$ $$\cE\otimes \cA_{\wt{X}}^{<D} \longrightarrow \cE\otimes \cA_{\wt{X}}^{<D} \otimes_{\pi^{-1} {{\mathcal O_{X^{an}}}}} \pi^{-1} \Omega^1_{X^{an}}(*D).$$
The kernel of this extension is denoted by $\cS^{<D}$. The restriction of this kernel to $U$ is the set of horizontal sections of $\nabla_h$ and it is equal to $\cS\otimes \exp(h)$
where $\cS$ is the local system of horizontal sections of $\nabla$. Since the coefficients of a section of
$\cS$, on a basis of $\cE$ at a point $P\in D$, have at most a polynomial growth, the germ of $\cS^{<D}$
at a point $\wt{P}\in\wt{D}$ is non zero if and only if $\exp(\wt{h})$ satisfies condition (\ref{flatnesscondition}) at $\wt{P}$. At such a point it is equal to the germ of $\pi^{-1}(\cS)\otimes \exp(\wt{h})$.

The sheaf of rapid decay chains \cite[Sec.~5.1]{Hien09} is obtained from the {\it sheaf} $\cC^{-p}_{\wt{X},\wt{D}}$ {\it of relative chains mod $\wt{D}$} by tensoring it with $\cS^{<D}$ :
\[
 \cC^{r.d., - p}_{\wt{X}} :=\cC^{-p}_{\wt{X},\wt{D}}\otimes_\C\cS^{<D}.
\]

Let $j: U\hookrightarrow \wt{X}$ be the inclusion map. The sheaf $\cS^{<D}$ is a subsheaf
of $j_*(\cS\otimes\exp(h))$, which is isomorphic to $j_*(\cS)$
through the multiplication by $\exp(-h)$. Therefore $\cC^{r.d., - p}_{\wt{X}}$
is a subsheaf of $\cC^{-p}_{\wt{X},\wt{D}}\otimes_\C j_*(\cS\otimes\exp(h))$,
and in the next lemma we determine its image in
$\cC^{-p}_{\wt{X},\wt{D}}\otimes_\C j_*(\cS)$. In all what follows we
identify $\cC^{r.d., - p}_{\wt{X}}$ with this image. This convention is the
most appropriate for the expression of integrals.
\begin{lemma}\label{Hiensufficient}
Let $\varSigma$ be a semi--algebraic set in $U$ such that $\Re h$ tends
to $-\infty$ on $\varSigma$ with a controlled argument for $h$, i.e.,
there exists  $\delta\in  \left]0,\frac{\pi}2\right[$ such that for all $R>0$
there exists a compact set $K\subset U$, such that, for all
$t\in\varSigma\setminus K$ we have:
\[
\Re(h(t))<-R \text{ and } Arg (h(t))\in  \left]\pi-\delta,\pi+\delta\right[.
\]
Then the closure $\ol{\varSigma}$ in $\wt{X}$ is a compact semi--algebraic set.
Moreover, let ${\mathfrak T}$ be any finite triangulation of $\ol{\varSigma}$,
and let $\varUpsilon:=\sum_\Delta \Delta\otimes \varsigma_\Delta$ be a section
of $\cC^{-d}_{\wt{X},\wt{D}}\otimes_\C j_*(\cS)$.
This is a finite sum where $\Delta$ runs, possibly with repetitions, over all the $d$-simplices of ${\mathfrak T}$ which are not included in $\wt{D}$ and $\varsigma_\Delta$ is a section of $\cS$ over $j^{-1}(\Delta)\cap U$. Then $\varUpsilon$ is a rapid decay
chain, whose support is contained in $\ol{\varSigma}$.
\end{lemma}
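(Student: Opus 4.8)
The plan is to treat the two assertions separately: first that $\overline{\varSigma}$ is a compact semi-algebraic subset of $\widetilde{X}$, and then that $\varUpsilon$ is a rapid decay chain supported in $\overline{\varSigma}$.

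For the first assertion, since $X$ is projective and $D$ is a normal crossing divisor, the real oriented blow-up $\widetilde{X}$ is compact, so $\overline{\varSigma}$, being closed in $\widetilde{X}$, is automatically compact. For semi-algebraicity I would work in the charts $\pi:([0,\epsilon)\times S^1)^k\times B(0,\epsilon)^{d-k}\to\CC^d$ described before the lemma: there $\pi$ is, in the real coordinates $(r_j\cos\theta_j,r_j\sin\theta_j)$ on the target and with $S^1$ the unit circle, a semi-algebraic map. Since $\varSigma$ is semi-algebraic in $X$ and contained in $U$, its trace in each target chart lifts to a semi-algebraic subset of the corresponding chart of $\widetilde{X}$, and the closure of a semi-algebraic set is semi-algebraic; as finitely many such charts cover $\widetilde{X}$ by compactness, $\overline{\varSigma}$ is semi-algebraic. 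In particular a finite triangulation $\mathfrak{T}$ exists, so that the sum defining $\varUpsilon$ is finite.

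For the second assertion, the statement about the support is immediate: the $\Delta$ occurring in $\varUpsilon$ are $d$-simplices of a triangulation of $\overline{\varSigma}$, hence $\supp(\varUpsilon)=\bigcup_\Delta\Delta\subseteq\overline{\varSigma}$. The content is that each coefficient, read in $\cC^{-d}_{\widetilde{X},\widetilde{D}}\otimes_\CC\cS^{<D}$ via multiplication by $\exp(-h)$ as in the convention preceding the lemma, is legitimate, i.e.\ that $\varsigma_\Delta\cdot\exp(h)$ satisfies the flatness condition \eqref{flatnesscondition} along $\Delta\cap\widetilde{D}$. I would combine two facts. First, $\varsigma_\Delta$ is a horizontal section of the local system $\cS$ of the regular connection $\nabla$, so by regularity its components on a frame of $\cE_X$ near a point of $\widetilde{D}$, where $D=\{t_1\cdots t_k=0\}$, have moderate growth: $|\varsigma_\Delta(t)|\le C\,(|t_1|\cdots|t_k|)^{-M}$ up to logarithmic factors. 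Second, along $\overline{\varSigma}\setminus\widetilde{D}$ and near $\widetilde{D}$, the hypothesis forces both $\Re h\to-\infty$ and $\operatorname{Arg} h\in(\pi-\delta,\pi+\delta)$, hence $|h|\ge|\Re h|\to\infty$ (so $\overline{\varSigma}\cap\widetilde{D}\subseteq\widetilde{h}^{-1}(\infty)$) and $\Re h\le-(\cos\delta)|h|$; since $1/h$ then extends holomorphically across $D$ and vanishes there, $|h|\ge c\,(|t_{i_1}|\cdots|t_{i_r}|)^{-a}$ for the components $\{t_{i_1}=0\},\dots,\{t_{i_r}=0\}$ of $D$ carrying the pole of $h$ and some integer $a\ge1$, so that $|\exp(h)|=\exp(\Re h)\le\exp\!\bigl(-c'(|t_{i_1}|\cdots|t_{i_r}|)^{-a}\bigr)$ with $c'>0$. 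This exponential factor decays faster than any monomial in the pole variables, and the controlled-argument hypothesis moreover confines $\varSigma$---hence $\overline{\varSigma}$---to a horn-shaped neighbourhood of the pole locus, so that the same factor also dominates the moderate-growth factor $(|t_i|)^{-M}$ in the remaining coordinates along $\Delta$. Multiplying the two estimates yields \eqref{flatnesscondition} for $\varsigma_\Delta\cdot\exp(h)$, i.e.\ $\varsigma_\Delta$ defines a section of $\cS^{<D}$ on a neighbourhood of $\Delta$; since $\mathfrak{T}$ is finite the constants are uniform, and $\varUpsilon=\sum_\Delta\Delta\otimes\varsigma_\Delta$ is a section of $\cC^{r.d.,-d}_{\widetilde{X}}$ with support in $\overline{\varSigma}$.

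The step I expect to be the main obstacle is exactly this last decay estimate near the corners of $\widetilde{D}$, where $\widetilde{h}^{-1}(\infty)$ meets components of $D$ along which $h$ is regular: there one must use the full strength of the hypothesis (the control of $\operatorname{Arg} h$, not merely $\Re h\to-\infty$), which is what confines $\varSigma$, and with it $\overline{\varSigma}$, to a thin horn around the pole divisor of $h$; compactness of $\overline{\varSigma}$ is then used to pass to constants uniform over all simplices of $\mathfrak{T}$. The remaining verifications are routine sheaf-theoretic bookkeeping.
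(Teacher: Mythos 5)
Your argument follows essentially the same route as the paper's proof: the hypothesis forces $\ol{\varSigma}\cap\wt{D}$ to lie over the polar locus of $h$ with $\arg h$ confined near $\pi$, and there the local expression $h=w/(v_1^{m_1}\cdots v_k^{m_k})$ with $w$ a unit, combined with $\Re h\le -\cos\delta\,|h|$, yields $|\exp(h)|\le\exp\left(-c\,(|v_1|^{m_1}\cdots|v_k|^{m_k})^{-1}\right)$, which dominates the moderate growth of the horizontal sections $\varsigma_\Delta$ of the regular connection. The differences are only in emphasis: you make explicit the compactness/semi-algebraicity of $\ol{\varSigma}$ (which the paper simply asserts) and you flag the corner points where non-polar components of $D$ meet the polar locus, a subtlety the paper's proof passes over silently.
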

\begin{proof}
The divisor $D$ is the union of the components $(D_i)_{i\in I}$ of $h^{-1}(\infty)$ and of other components $(D'_j)_{j\in J}$, such that on each $D'_j$ the restriction of $h$ is surjective on $\PP^1$ or takes a finite constant value. From the fact that $|h(t)|$ tends to $+\infty$ on the support  $\supp(\varUpsilon):=\bigcup\{\Delta\mid\varsigma_\Delta\ne0\}$ of $\varUpsilon$, we deduce that $\ol{\supp({\varUpsilon})}\cap D\subset h^{-1}(\infty)=\bigcup_{i\in I}D_i$, where $\ol{\supp({\varUpsilon})}$ denotes the closure of $\pi(\supp({\varUpsilon}))$ in $X$. Furthermore the closure of $\supp(\varUpsilon)$ in $\wt{X}$ meets $\wt{D}$ only at points such that $\arg \tilde{h}\in [\pi-\delta,\pi+\delta]$.

Let $\tilde{P}\in \wt{D}$ be such a point and let $P=\pi(\tilde{P})$. We choose local coordinates $(v_1,\dots,v_d)$ centered at $P$ such that a local equation of $D$ is $v_1\dots v_k=0$. The local expression of $h$ is
\[
h(v)=\frac{w(v_1,\dots,v_d)}{v_1^{m_1}\dots v_k^{m_k}}
\]
with all $m_k>0$ and $w(v)$ a unit since there are no points of indeterminacy. In a small enough neighbourhood of $P$, $h(v)=-\exp(i\delta(v))\vert h(v)\vert$, with
$\cos\delta(v)\geq \cos\delta>0$ and $|w(v_1,\dots,v_d)|\geq R'$, for some $R'>0$. Finally, around $P$ we obtain the expected rapid decay condition because:
\[
|\exp(h(v))|=\exp(\Re h(v))\leq \exp\left(-\frac{R'\cos\delta}{|v_1|^{m_1}\dots |v_k|^{m_k}}\right).
\]
\end{proof}
In order to treat integrals $I_C(\beta;x)$ as in the introduction, we consider the connection $(\cO_U,\nabla_\beta)$ on $U=\left(\CC^*\right)^d$
with the differential $\nabla_\beta=d+(\beta+1)\frac{dt}{t}\wedge$
and its meromorphic extension $(\cO_X(*D),\nabla_\beta)$ to $X$. It contains a lattice isomorphic to $\cO_X$, and the local system of horizontal sections over $U$ is $\cS_\beta=\CC\cdot t^{-\beta-1}$. We set $h(t)=\sum_{\ell=1}^n x_\ell t^{a(\ell)}$
and we intend to apply Lemma \ref{Hiensufficient} for a fixed value of $x$.
For that purpose, we have to use a cycle different from the cycles $C_{p,\delta}$, considered in Section \ref{intro}, since the support of $C_{p,\delta}$ always have the origin of $\CC^d$ in its closure, and when $t$ tends to $0$ along $C_{p,\delta}$, $h$ does not tend to $+\infty$. This cycle is described in detail in the next section \ref{the rd cycle}. It is is still a Borel-Moore cycle on the universal covering $(\wt{\CC^*})^d$, whose projection $\Sigma$
on $(\CC^*)^d$ is semi--algebraic. There is a triangulation $\mathfrak{T}$ of its closure $\ol{\Sigma}$ in $\wt{X}$ and a set
of $d$-simplices $\Delta\in\mathfrak{T}$ not contained in $\wt{D}$, such that $C$ is obtained by taking their restriction
to $(\CC^*)^d$ and an appropriate lifting to the universal covering
$(\wt{\CC^*})^d$. These liftings induce determinations
$\varsigma_\Delta=\left(t^{-\beta-1}\right)_\Delta$ of $t^{-\beta-1}$ and we
identify $C$ with the twisted chain : $\sum \Delta\otimes \varsigma_\Delta$.
The formula in \cite[page~23]{AoK}, can be directly adapted to the
irregular case :
\[
\int_{\Delta\otimes \varsigma_\Delta}t^{-\beta-1}e^{h(x,t)}dt=\int_\Delta \left(t^{-\beta-1}\right)_\Delta e^{h(x,t)}dt
\]
and the construction above shows that the integral $I_C(\beta;x)$ along $C$ is the integral along this twisted cycle.
\begin{corollary}\label{Hiensufficient2}
Let us assume that in the above situation $h(x,t)$ satisfies the condition of rapid decay and controlled argument in Lemma \ref{Hiensufficient}. Then the cycle $\varUpsilon=\sum \Delta\otimes \varsigma_\Delta$ associated with $C$ is a rapid decay cycle, and the integral
$I_C(\beta;x)$ along this cycle is convergent.
\end{corollary}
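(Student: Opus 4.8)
The plan is to read the statement off Lemma \ref{Hiensufficient} almost directly. I would take $\varSigma$ to be the semi--algebraic projection $\Sigma\subset U=(\CC^*)^d$ of the Borel--Moore cycle $C$; ``the above situation'' is precisely the assumption that, for the fixed value of $x$, the function $h(x,t)=\sum_{\ell=1}^n x_\ell t^{a(\ell)}$ has rapid decay with controlled argument on $\Sigma$ in the sense of Lemma \ref{Hiensufficient} (this is what will be verified for the cycle constructed in Subsection \ref{the rd cycle}). Lemma \ref{Hiensufficient} then gives at once that $\ol{\Sigma}\subset\wt{X}$ is compact semi--algebraic and that, for the triangulation $\mathfrak{T}$ of $\ol{\Sigma}$ used to describe $C$, the twisted chain $\varUpsilon=\sum_\Delta\Delta\otimes\varsigma_\Delta$ (sum over the $d$--simplices of $\mathfrak{T}$ not contained in $\wt{D}$, with $\varsigma_\Delta=(t^{-\beta-1})_\Delta$ the determination induced by the chosen lifting to $(\wt{\CC^*})^d$) is a section of $\cC^{r.d.,-d}_{\wt{X}}$ supported in $\ol{\Sigma}$. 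To upgrade this rapid decay \emph{chain} to a rapid decay \emph{cycle}, I would use that $C$ has vanishing boundary as a Borel--Moore cycle on $(\wt{\CC^*})^d$: under $\pi$ and the identification of Borel--Moore chains on $U$ with relative chains mod $\wt{D}$ on $\wt{X}$, this becomes $\partial\varUpsilon=0$ in $\cC^{r.d.,-\bullet}_{\wt{X}}$, since the rapid decay boundary is the ordinary one tensored with $\cS^{<D}$ and flatness along $\wt{D}$ is inherited by faces, so closedness as an ordinary relative chain is the same as closedness as a rapid decay chain.

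For the convergence of $I_C(\beta;x)$ I would invoke the \cite{AoK}--type identity recalled just above the statement, which expresses $I_C(\beta;x)$ as the finite sum $\sum_\Delta\int_\Delta(t^{-\beta-1})_\Delta\,e^{h(x,t)}\,dt$ over the compact simplices of $\mathfrak{T}$. On the part of $\ol{\Sigma}$ away from $\wt{D}$ the integrand is continuous on a compact set, hence bounded. Near a point of $\wt{D}$, in local coordinates with $D=\{t_1\cdots t_k=0\}$, the coefficient $t^{-\beta-1}$ has at most polynomial growth in $1/|t_1|,\dots,1/|t_k|$ (for $\beta$ fixed, the argument factor being bounded on the compact $\ol{\Sigma}$), whereas the flatness condition \eqref{flatnesscondition} applied to $u=e^{h(x,t)}$ — which holds exactly because $\varUpsilon$ is a rapid decay chain — bounds $|e^{h(x,t)}|$ by $C_{K,N}|t_1|^{N_1}\cdots|t_k|^{N_k}$ for every $N$. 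Taking $N$ large enough, the product of the two factors tends to $0$ along $\wt{D}$, so the integrand extends continuously (by $0$ on $\wt{D}$) to the compact set $\ol{\Sigma}$; hence each $\int_\Delta$ converges absolutely and so does $I_C(\beta;x)$, which is the claim.

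The argument is short because Lemma \ref{Hiensufficient} carries the burden; the only points needing a little care are (i) matching the Borel--Moore cycle property of $C$ with $\partial\varUpsilon=0$ in the relative complex mod $\wt{D}$, so that one genuinely obtains a cycle and not merely a chain, and (ii) checking that the bound on $|t^{-\beta-1}|$ near $\wt{D}$ is uniform, which follows from the compactness of $\ol{\Sigma}$ in $\wt{X}$ and the boundedness there of the local arguments. I would also note that the estimate never uses $\Re\beta<0$ — only that $\beta$ is fixed — so $I_C(\beta;x)$ is convergent for every $\beta\in\CC^d$, in agreement with the discussion opening this section.
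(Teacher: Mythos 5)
Your proof is correct and follows essentially the same route as the paper: the rapid decay cycle claim is read off Lemma \ref{Hiensufficient}, and convergence is obtained locally near $\wt{D}$ by pairing the polynomial growth of $t^{-\beta-1}$ with the decay of $e^{h}$ (you use the abstract flatness bound \eqref{flatnesscondition} with $N$ large, the paper the explicit exponential bound from the lemma's proof) together with compactness of $\ol{\Sigma}$. Your extra care about $\partial\varUpsilon=0$ and the boundedness of the argument factor, and your closing remark that no condition on $\Re\beta$ is needed, are consistent with the paper, which treats the cycle assertion as immediate.
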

\begin{proof}
Only the last assertion requires a proof. Consider again a point $P\in D$, with coordinates  $(v_1,\dots,v_d)$ as in the last argument for Lemma \ref{Hiensufficient}. Since $t$ is algebraic,
$t^{-\beta-1}$ has at most a polynomial growth around $P$, with respect to $\displaystyle\frac1{v_1\cdots v_k}$. Therefore $\displaystyle t^{-\beta-1}e^{h(x,t)} $ is locally bounded by an expression of the form $\displaystyle\frac1{|v_1\cdots v_k|^m}\exp\left(-\frac{R'\cos\delta}{|v_1|^{m_1}\cdots |v_k|^{m_k}}\right)$ for some integer $m>0$. This yields a convergent integral on $\cU_P\cap (\CC^*)^d$ for some closed neighbourhood $\cU_P$ of $P$. Since $\ol{\Sigma}\cap D$ can be covered by a finite number of such $\cU_P$, the integral is indeed convergent.
\end{proof}
From now on we will identify a cycle on the universal covering and the corresponding twisted cycle and denote it by the same symbol.

\subsection{Realization of solutions by integrals over rapid decay cycles.}\label{the rd cycle}
We state and prove here the main result of this section.
Let us recall that we define $q_k$, for $k=1,\ldots,d$, as the lowest common denominator of the $k$-th row of $A$, that is as the smallest integer such that $q_ka(\ell)_k\in\NN$, for $\ell=d+1,\dots,n$ (cf. Remark \ref{finite_covering}).

\begin{theorem}\label{limit-rapid-decay}
There is a rapid decay cycle $\widetilde{D}_{p,\delta}$ such that the integral
$$\int_{\widetilde{D}_{p,\delta}} t^{-\beta-{\bf 1}}\exp\left(t_1 + \cdots + t_d +\sum_{j=d+1}^n y_{j} t^{a(j)}\right) dt $$ is equal to $F_{p,\delta}(\beta , y)$ up to a nonzero constant factor if $\Re \beta<0$ and $q_k\beta_k\notin \ZZ$, for $k=1,\dots,d$.
\end{theorem}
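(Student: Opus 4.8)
The plan is to build the cycle $\widetilde{D}_{p,\delta}$ as a "thinned at infinity" version of a product of Hankel contours, and then to match the resulting integral with $F_{p,\delta}(\beta;y)$ by a contour-deformation (Cauchy) argument. First I would work on the finite covering of $(\CC^*)^d$ described in Remark \ref{finite_covering}, obtained by ramifying the $k$-th $\CC^*$ factor $q_k$ times; on this covering the exponential term $\exp(t_1+\cdots+t_d+\sum_j y_j t^{a(j)})$ is a genuine (univalued) function. On this covering, for each $k\in\sigma$ replace the half-line $\{\arg t_k=(1+\delta_k+2p_k)\pi\}$ parametrizing $D_{p,\delta}$ by a Hankel-type contour $\mathcal H_k$: it comes in from infinity along the ray of argument slightly less than $(1+\delta_k+2p_k)\pi$, loops around the origin (staying in the sector where $\Re t_k<0$ modulo the other contributions), and goes back out to infinity along the ray of argument slightly more than $(1+\delta_k+2p_k)\pi$. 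The product cycle $\prod_k\mathcal H_k$ is a Borel–Moore cycle on the universal covering whose image is semialgebraic in $(\CC^*)^d$; however the integral over the naive product of Hankel contours is grossly divergent because along the part of each $\mathcal H_k$ that encircles the origin the other factors $t_\ell$ are large and $\Re(y_jt^{a(j)})$ need not be negative. The remedy, following the construction announced in the introduction, is to "thin" the contour towards infinity: choose the incoming/outgoing rays of $\mathcal H_k$ to converge to the ray of $D_{p,\delta}$ only in the limit $|t_k|\to\infty$, while the encircling part near the origin is kept in a compact region where the whole exponent stays bounded. Concretely, one lets the half-aperture of the sector swept by $\mathcal H_k$ shrink to $0$ as $|t_k|\to\infty$, fast enough that the estimate \eqref{dominant-exponent}, or rather its refinement from Lemma \ref{sufficientplus} and Remark \ref{sufficientdoubleplus}, still produces a bound of the form $|\exp(M(t,y))|<\exp(C-c(|t_1|+\cdots+|t_d|))$ along the thinned cycle. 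Call the resulting cycle $\widetilde D_{p,\delta}$.

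The next step is to verify that $\widetilde D_{p,\delta}$ is a genuine rapid decay cycle in the sense of \cite{Hien09}. For this I would invoke Lemma \ref{Hiensufficient} and Corollary \ref{Hiensufficient2}: one must exhibit a semialgebraic set $\varSigma$ in $U=(\CC^*)^d$ (the projection of $\widetilde D_{p,\delta}$, pushed down through the $q_k$-fold ramification) on which $\Re h$ with $h(t)=t_1+\cdots+t_d+\sum_j y_jt^{a(j)}$ tends to $-\infty$ with controlled argument near $\pi$. Near the origin there is nothing to check since $\widetilde D_{p,\delta}$ stays away from $0$ there by construction (the encircling part is in a compact region bounded away from $t_k=0$ — this is exactly the obstruction observed for the $C_{p,\delta}$ in Remark \ref{no-rd-at-0}), so the closure of $\varSigma$ meets $\widetilde D$ only over $h^{-1}(\infty)$. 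There, because $a(\ell)\in\RR_{>0}^d$ for all $\ell$ (Assumption \ref{assumption-2} combined with $(a(1),\dots,a(d))=\mathrm{id}$), whenever $|t_k|\to\infty$ for some $k\in\sigma$ the term $t_k$ itself dominates and forces $\Re h\to-\infty$; the "controlled argument'' hypothesis follows from the same bound $\Re(y_\ell t^{a(\ell)})\le-\xi_\ell\cos\vartheta$ used in the proof of Lemma \ref{sufficientplus}, since the thinning keeps $\arg t_k$ close to $(1+\delta_k+2p_k)\pi$ at infinity and $|\delta_k|<1/2$. Triangulating $\overline\varSigma$ in $\widetilde X$, lifting the simplices to the universal covering, and tensoring with the determination of $t^{-\beta-\mathbf1}$ produces the twisted cycle, and Corollary \ref{Hiensufficient2} gives convergence of the integral along it, using $q_k\beta_k\notin\ZZ$ so that the local monodromy of $t^{-\beta-\mathbf1}$ around each branch of $D$ is nontrivial and the Hankel loops are nonzero homology classes (this is where the hypothesis $q_k\beta_k\notin\ZZ$ is needed).

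Finally I would prove the equality $\int_{\widetilde D_{p,\delta}}=c\cdot F_{p,\delta}(\beta;y)$ for a nonzero constant $c$. The idea is a one-variable-at-a-time contour collapse: fix $\ell\neq k$ and, keeping those variables on the ray of $D_{p,\delta}$, deform the $k$-th Hankel contour $\mathcal H_k$ onto the doubled ray $\{\arg t_k=(1+\delta_k+2p_k)\pi\}$. The incoming and outgoing edges of $\mathcal H_k$ differ by the monodromy factor $e^{\pm2\pi\sqrt{-1}(\beta_k+1)}=e^{\pm2\pi\sqrt{-1}\beta_k}$ of $t^{-\beta-\mathbf1}$, so collapsing $\mathcal H_k$ replaces the Hankel integral by $(1-e^{2\pi\sqrt{-1}\beta_k})$ (or a similar unit, up to orientation and the $q_k$-ramification bookkeeping) times the integral over the single ray — this is the classical Hankel-to-Gamma mechanism, and the deformation is legitimate by Cauchy's theorem once one checks, using the rapid-decay estimate along the homotopy, that the arc contributions at infinity vanish and that the integrand stays holomorphic in the swept region (it does, since $y_jt^{a(j)}$ is holomorphic on the $q_k$-fold cover). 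Iterating over $k=1,\dots,d$ turns $\int_{\widetilde D_{p,\delta}}$ into $\prod_{k=1}^d(1-e^{2\pi\sqrt{-1}\beta_k})\cdot F_{p,\delta}(\beta;y)$; the constant $\prod_k(1-e^{2\pi\sqrt{-1}\beta_k})$ is nonzero precisely because $\beta_k\notin\ZZ$ (indeed $q_k\beta_k\notin\ZZ$ already suffices and is what we assumed). The main obstacle, and the point requiring the most care, is the homotopy-invariance step: one must show that throughout the family of contours interpolating between the product of thinned Hankel loops and the cycle $D_{p,\delta}$ (a chain with noncompact support), the integrand remains dominated by an integrable function uniformly — this is exactly where the thinning was designed, and where Lemma \ref{sufficientplus}, Remark \ref{uniform-constants}, and the refined bound of Remark \ref{sufficientdoubleplus} must be applied not just to $D_{p,\delta}$ but uniformly across the whole homotopy, so the bulk of the work is checking that the thinning rate can be chosen to make all these estimates simultaneously valid.
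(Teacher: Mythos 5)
Your outline shares the paper's skeleton (a product of Hankel-type contours, corrected so that Hien's criterion in Lemma \ref{Hiensufficient} and Corollary \ref{Hiensufficient2} applies, and a final identification with $F_{p,\delta}(\beta;y)$ up to the factor $\prod_{k=1}^d\bigl(e^{2\pi\sqrt{-1}\,q_k\beta_k}-1\bigr)$), but the thinning you propose does not remove the divergence that you yourself correctly identified, and that is the heart of the proof. The trouble with the naive product $\prod_k\mathcal H_k$ arises when $t_k$ runs over its loop around the origin while the remaining coordinates are large on their outgoing rays: there $|y_n t^{a(n)}|\to\infty$ (all entries of $a(n)$ are positive and the other radii blow up) and its argument sweeps all values as $t_k$ rotates, so $\Re M(t,y)$ is unbounded above on that piece of the product cycle. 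Shrinking the aperture of each $\mathcal H_k$ as $|t_k|\to\infty$ is a modification at large $|t_k|$ only; it leaves these ``loop at small $|t_k|$ times rays at infinity'' pieces untouched, and your claim that the encircling part is ``kept in a compact region where the whole exponent stays bounded'' cannot hold for a product cycle, since the exponent depends on all coordinates simultaneously. (In fact no aperture is needed on the edges at all: with arguments $0$ and $2q_k\pi$ the two edges project onto the ray of $D_{p,\delta}$, where Lemma \ref{sufficientplus} already applies.) The paper's cycle $\varUpsilon(\epsilon)$ is genuinely not a product: the rotating arcs are supported over the hypersurface $r^{a(n)}=\epsilon^{|a(n)|}$, stratified according to which coordinates attain the minimum, so that along every rotation the dominant monomial has constant modulus $|y_n|\,\epsilon^{|a(n)|}$ (see \eqref{last-monomial}) and one still obtains a bound of type \eqref{dominant-exponent} (see \eqref{boundexp}). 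This coupled, non-product thinning is the missing idea; without it the cycle you describe is not of rapid decay and the integral along it diverges, so neither the application of Corollary \ref{Hiensufficient2} nor the contour collapse can get started.

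Two further points. The hypothesis $q_k\beta_k\notin\ZZ$ plays no role in convergence or in making the loops homologically nontrivial, contrary to what you suggest; it is used only at the very end, to guarantee that the constant $\prod_{k=1}^d\bigl(e^{2\pi\sqrt{-1}\,q_k\beta_k}-1\bigr)$ relating the two integrals is nonzero. And your final identification by collapsing one Hankel contour at a time hinges on a uniform domination along a noncompact homotopy, which you flag but leave unresolved; the paper takes a different route that avoids it: the contributions of the strata where rotation occurs carry a factor $\epsilon^{-\Re(\beta_1+\cdots+\beta_s)}$ and hence tend to $0$ as $\epsilon\to 0$ (this is precisely where $\Re\beta<0$ enters), the open stratum contributes $\prod_{k=1}^d\bigl(e^{2\pi\sqrt{-1}\,q_k\beta_k}-1\bigr)F_{p,\delta}(\beta;y)$ in the limit, and the independence of $H_{\varUpsilon(\epsilon)}$ on $\epsilon$ is proved by a Stokes argument on the $(d+1)$-chain swept out between two values of $\epsilon$, truncated by polydisks. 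If you want to keep your collapse argument instead, you would still first need the paper's (or an equivalent) construction of the cycle, and then justify the homotopy estimates you postponed.
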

\begin{remark}
The argument of the exponential factor is not a polynomial but becomes a polynomial after a finite covering as in Remark \ref{finite_covering}. Therefore we can apply Corollary \ref{Hiensufficient2} to this covering. Performing backwards the change of variables in Section  \ref{section-change-coordinates}, our result gives a rapid decay cycle for $I_C(\beta;x)$.
\end{remark}
\begin{proof}
The proof starts with a preliminary reduction and then has three steps. First we build cycles depending on a parameter $\epsilon>0$ for which Corollary \ref{Hiensufficient} can be applied. We then show  that $F_{p,\delta}(\beta,y)$ is the limit when $\epsilon \, \too \, 0$ of the integrals over these cycles and finally we prove that these integrals are in fact independent of $\epsilon$.

If we perform the change of coordinates $t_k=e^{\sqrt{-1}\,\pi( 1 + 2 p_k + \delta_k)} r_k$ for $k=1,\ldots,d$, the image of the cycle $D_{p,\delta}$, defined in \eqref{Dpdelta} is just the positive quadrant $\RR^d_{>0}$, and we find

\begin{equation}\label{5}
F(\beta;y)=\int_{\RR^d_{>0}} e^{-\sqrt{-1}\,\pi\ideal{\nn+2p+\delta,\,\beta}}r^{-\beta-{\bf 1}} \exp{{\left(-\sum_{k=1}^d e^{\sqrt{-1}\,\pi\delta_k}r_k + \sum_{j=d+1}^n z_{j}r^{a(j)}\right)}} dr
\end{equation}
with $z_j=e^{\sqrt{-1}\,\pi\ideal{\nn+2p+\delta,\, a(j)}}y_{j}$, and $\Re z_n<0$.

We do not loose any information by replacing $z_j$ by $y_j$, and for the sake of simplicity we skip the constant $e^{-\sqrt{-1}\,\pi\ideal{\nn+2p+\delta,\,\beta}}$ and consider only the case $p=\delta=0$, hence reduce to the integral:
\begin{equation}\label{6}
F(\beta;y)=\int_{\RR^d_{>0}} r^{-\beta-{\bf 1}} \exp{{\left(-r_1-\cdots-r_d+ \sum_{j=d+1}^n y_{j}r^{a(j)}\right)}}dr.
\end{equation}

We remember that, by Lemma \ref{sufficientplus}, this integral is convergent when $\Re \beta_k <0$ for all $k=1,\ldots, d$ and $\Re (y_n r^{a(n)}) <0$.
Finally we are looking for cycles $\varUpsilon(\epsilon)$ such that the integral
\[H_{\varUpsilon(\epsilon)}(\beta;y):= \int_{\varUpsilon(\epsilon)} u^{-\beta-{\bf 1}} \exp{{\left(-u_1-\cdots-u_d+ \sum_{j=d+1}^n y_{j}u^{a(j)}\right)}}du
\]
tends to $F(\beta;y)$ when $\epsilon\,\too\, 0$. We denote $u$ the variable in this integral, instead of $r$, because we dedicate this last letter to a range included in $\RR$. The cycle $\widetilde{D}_{0,0}$ in the statement of Theorem \ref{limit-rapid-decay}, is the image of $\varUpsilon(\epsilon)$ by $t_k=-u_k$ for some $\epsilon>0$.

Inspecting the proof, the case of a general value of $p, \delta$ is a straightforward adaptation.

Recall that by Assumption \ref{assumption-2} we have that $a(j)\in \QQ^d_{>0}$ for all $j=d+1,\ldots, n$, $|a(j)|<1$, for $j=d+1,\ldots, n-1$ and also $|a|=a_1+\cdots + a_d>1$, setting $a:=a(n)=~^t(a_1,\ldots,a_d)$.

Let us first describe a product of cycles $C:= \gamma_1\times\cdots\times \gamma_d$ on the universal covering $(\wt{\CC^*})^d$. We consider the finite covering $\displaystyle(\CC^*)^d_v \too (\CC^*)^d_u$ of multidegree $(q_1,\dots,q_d)$, given by the formulas $v_k^{q_k}=u_k$, between two samples of the torus $(\CC^*)^d$. In Figure 2 we draw the projection of $\gamma_k$ on $\CC^*_{v_k}$, and for the projection on the space  $\CC^*_{u_k}$ we turn $q_k$ times on the circle of radius $\epsilon$ in the $k$--th component.

In Figure 2 the radius is $\epsilon^{\frac1{q_k}}$ on the $k$--th component. We choose the argument to be $0$ or $2q_k{\pi}$, on the two half--lines of Figure 2. The integrand is the same up to a constant factor on the $2^d$ different products of the $d$ half--lines in $(\wt{\CC^*})^d$. With these choices we can think of $C$ indifferently as a cycle on $(\wt{\CC^*})^d$, or as a twisted cycle on either $(\CC^*)^d_u$ or $(\CC^*)^d_v$.

\centerline{$\stackrel{\includegraphics[scale=0.50]{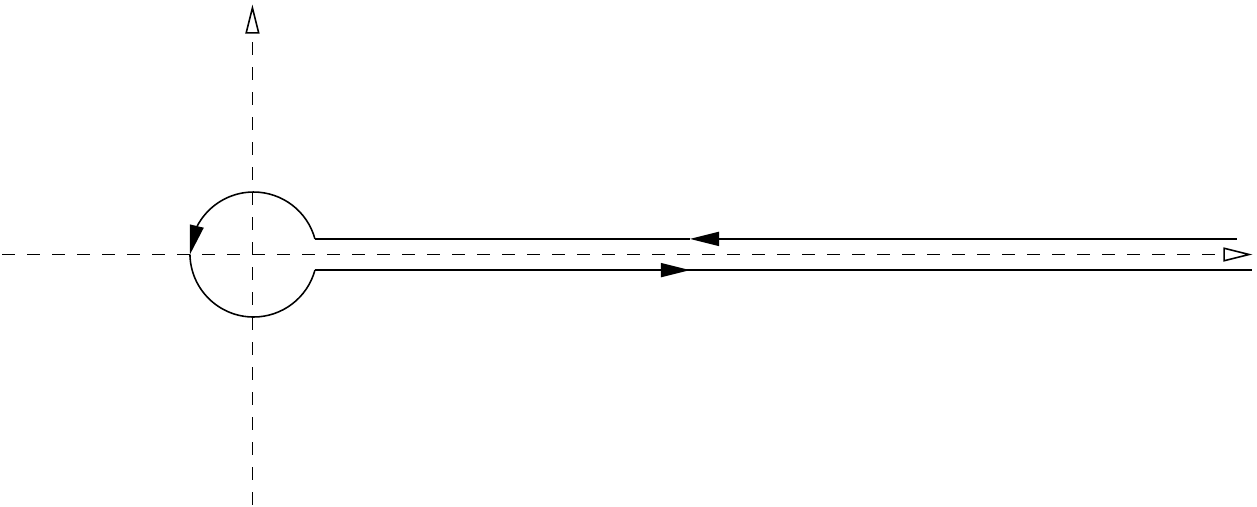}}{{\rm Figure\; 2}}$}

However there is a problem of convergence for the integral $H_C(\beta;y)$. The cycle $C$ is a union of products of the type $\left(S_\epsilon\right)^\eta\times \left([\epsilon,+\infty)\right)^\tau$. Here $S_\epsilon$ is a circle of radius $\epsilon>0$ and $\eta\cup \tau=\{1,\ldots,d\}$ is a partition of $\{1,\ldots,d\}$. On each piece with $\tau \ne \emptyset\ne \eta$ the integral is not convergent. Indeed, when $k\in \eta$ and $u_k$ varies in $S_\epsilon$, the argument of each monomial  $y_\ell  t^{a(\ell)}$ take all values mod $2\pi$. Therefore the monomial itself take arbitrarily large positive values, as well as $|t_\ell|$ for $\ell\in \tau$.

We shall build the cycle $\varUpsilon(\epsilon)$ as a deformed version of $C$. We identify $(\wt{\CC^*})^d$ with $\CC^d=\RR^d +\sqrt{-1} \,\RR^d$ and the covering map $(\wt{\CC^*})^d\to (\CC^*)^d$ with the map $\log r +\sqrt{-1}\,\theta\too (r_1 e^{\sqrt{-1}\,\theta_1},\dots, r_d e^{\sqrt{-1}\,\theta_d})$. We consider $(\wt{\CC^*})^d$ as fibered over $\RR_{>0}^d$, by the map $\log r + \sqrt{-1} \theta \too (r_1,\ldots,r_d)$, with fiber isomorphic to $\sqrt{-1}\, \RR^d$. The image of the restriction of this map to $\gamma_1\times\ldots\times \gamma_d$, is $[\epsilon,+\infty)^d$, with semi--algebraic fibers. The fiber over $(r_1,\ldots,r_d)$ is $\log r + \sqrt{-1}\, F_r$, where $F_r\subset\RR^d$ is a subset of arguments $\arg u:=(\arg u_1,\ldots,\arg u_d)$, which depends on $r$ in the following way:

\begin{enumerate}
\item Above each point $(r_1,\ldots,r_d)$ in the open quadrant $]\epsilon,+\infty)^d$, there are $2^d$ points with $\arg u \in F_r=\prod_{k=1}^ d\{0,2q_k\pi\}.$
\item Above the point $\{(\epsilon,\ldots,\epsilon)\}$, the argument $\arg u$ is in $F_r=\prod_{k=1}^ d[0,2q_k\pi]$.
\item In general, above the product $\{\epsilon\}^\eta\times \left(]\epsilon,+\infty)\right)^\tau$, the fiber has dimension $|\eta|$, the cardinality of $\eta$, with $2^{|\tau|}$ connected components. It is described in the universal covering $(\wt{\CC^*})^d$, by
\[
\begin{array}{ccc}
u_k\in &\log \epsilon + [0,2q_k\pi]\sqrt{-1} & \,\text{ if } k\in \eta,\\
u_\ell\in &\log r_\ell + \{0,2q_\ell\pi\} \sqrt{-1} & \, \text{ if } \ell\in \tau.
\end{array}
\]

\end{enumerate}

We choose instead of $\gamma_1\times\ldots\times \gamma_d$ a cycle $\varUpsilon(\epsilon)$
fibered over the subset of $\RR_{>0}^d$ described by the equation $r^a\geq \epsilon^{|a|}$,
which is the union of $2^d$ semi--algebraic strata:
\begin{enumerate}
\item  $\mathcal{S}_\emptyset=\{r\in\RR_{>0}^d \mid r^a>\epsilon^{|a|}\}$
\item $\mathcal{S}_\eta=\{r\in\RR_{>0}^d\mid r^a= \epsilon^{|a|},\,\text{ and } \eta=\{p\mid r_p=\min_{1\leq k\leq d} r_k\}\}$, if $\eta\ne\emptyset$.
\end{enumerate}
We shall sometimes write $re^{\sqrt{-1}\,\theta}$ instead of $\log r+\sqrt{-1}\,\theta$, since the abuse of notation fits better with the expression of the integral and it is clear from the context that when the target space is $\wt{\CC}$, arguments $\theta$ are to be considered in $\RR$.
\begin{definition} Description of the cycle $\varUpsilon(\epsilon)$:
\begin{enumerate}
\item The fiber of the support of $\varUpsilon(\epsilon)$ over the point $r\in \mathcal{S}_\eta$ is:
\[
\varUpsilon(\epsilon)_r:=\prod_{k\in \eta }(\log r_k+[0,2q_k\pi]\sqrt{-1})\times \prod_{\ell\in \tau}(\log r_\ell+\{0,2q_\ell\pi\}\sqrt{-1}).
\]
\item Let us take some $r\in S_\eta$. We have $r=(\rho^\eta;(r_k)_{k\in\tau})$, with $\rho^\eta\in \RR^\eta$ the point with all coordinates equal to $\rho$. When $\eta\ne\emptyset$, these data are subject to the conditions:
\begin{equation}\label{real-projection} \rho<\min((r_k)_{k\in\tau}), \quad r^a= \rho^{\sum_{j\in\eta}a_j}\prod_{k\in\tau}r_k^{a_k}=\epsilon^{|a|}.
\end{equation}
\item The projection of $S_\eta$ on the space $\RR_{>0}^{\tau}$ is a bijection $S_\eta\too U_{\eta}$ to the open subset  described by the inequalities :
\[
\epsilon^{|a|}<r_k^{\sum_{j\in\eta}a_j}\prod_{\ell\in\tau}r_\ell^{a_\ell} \text{ for any} \, k\in \tau.
\]
and the value of the $\eta$-coordinate $\rho$ of a point $r\in S_\eta$ is a function $\rho((r_k)_{k\in\tau})$ by implicit equation \eqref{real-projection}.
\item  Let $s\in\{0,\dots,d\}$ be the number of elements in $ \eta$. Then $\varUpsilon(\epsilon)_\eta$ is the union of  $2^{d-s}$ pieces. A typical piece is indexed by some $(\xi_k)_{k\in\tau}\in (\{0,1\})^\tau,$ and  parametrized by $\prod_{k\in\eta}[0,2q_k\pi]\times U_\eta\subset \prod_{k\in\eta}[0,2q_k\pi]\times \RR_{>0}^{\tau},$ in the following way:
\begin{equation}\label{param}
\xymatrix{\left((\theta_j)_{j\in\eta};(r_k)_{k\in\tau}\right)\ar[r]&\left((\rho e^{\sqrt{-1}\,\theta_j})_{j\in\eta};(r_ke^{2\sqrt{-1}\,\pi\xi_k q_k})_{k\in\tau}\right)}.
\end{equation}
\item We choose the coherent system of orientations inspired by the product of cycles $\gamma_k$, with the  circles positively oriented: we orient $\prod_{k\in\eta}[0,2q_k\pi]\times \RR_{>0}^{\tau}$, by its canonical orientation multiplied by the signature of the permutation $(\eta,\tau)$ of $\{1,\dots,d\}$, and by $(-1)^{d-\sum\xi_k}$.
\end{enumerate}
\end{definition}
In fact one can easily check that there is a radial isotopy from $\gamma_1\times\ldots\times \gamma_d$ to $\varUpsilon(\epsilon)$, which yields an oriented stratified isomorphism.

Indeed, for $r:=(r_1,\dots,r_d)\in(\RR_{>0}^d)$, with $r_k\geq \epsilon$ for all $k$, define $r_0=\min\{r_k\}$. On the half--line $\RR_{>0}r$ there is a unique
point $r'=(r_1',\cdots,r_d')$ with $\min\{r_k'\}=\epsilon$, and a unique point $\rho:=(\rho_1,\cdots,\rho_d)$, such that $\rho^a=\epsilon^{|a|}$. Let us consider $\rho_0=\min\{\rho_k\}$ and the linear multiplication on $\RR_{>0}r$ by the ratio $\rho_0/\epsilon=r_0 r^{-a/|a|}$, which depends continuously on $r$. Then the map
$\log r + i \theta \mapsto \log ((\rho_0/\epsilon) r)+\sqrt{-1}\,\theta$ from $\gamma_1\times\ldots\times \gamma_d$ to $\varUpsilon(\epsilon)$ is the mentioned radial isotopy.

Now we are proving that $\varUpsilon(\epsilon)$ is a rapid decay cycle and that the integrals $H_{\varUpsilon(\epsilon)}(\beta;y)$ are convergent. We work separately on each piece of the cycle.
\begin{remark}\label{Hiencycle} We work with the ramified version $(\CC^*)_v^d$ of the space $(\CC^*)_u^d$, that we introduced in the description of the cycles $C$ and $\varUpsilon(\epsilon)$. Our Corollary \ref{Hiensufficient2} is applied to $\varUpsilon(\epsilon)$ seen as a twisted cycle on this ramified  space, endowed with the pullback of the local system $\CC\cdot u^{-\beta-\nn}e^{M(u,y)}$. However all our calculations are done with the variable $u$. Indeed both variables $u$ and $v$ are equivalent for the control of any behaviour at infinity, since $\Vert u\Vert_1:=\sum |u_k|=\sum |v_k|^{q_k}$. Remark \ref{finite_covering} shows that when we come back to the original variable $t$, there is a smooth and finite covering map between the torii  $(\CC^*)_v^d$ and  $(\CC^*)_t^d$.
\end{remark}

Let us denote  $\varUpsilon(\epsilon)_\eta$ the union of the pieces of the cycle $\varUpsilon(\epsilon)$ above the stratum $\mathcal{S}_\eta$. By Assumption \ref{assumption} and the fact that $\Re y_nu^{a(n)}<0$ along $\varUpsilon(\epsilon)_\emptyset$, we can prove exactly as in Lemma \ref{sufficientplus} an inequality of type \eqref{dominant-exponent} for $u\in \varUpsilon(\epsilon)_\emptyset$.

Let us consider  a stratum with $\eta\ne\emptyset$. The last monomial of the argument of the exponential in
$F(\beta;y)$ satisfies:
\begin{equation}\label{last-monomial}
|y_nu^{a(n)}|=|y_n|\epsilon^{|a(n)|}.\end{equation}

For $\eta=\{1,\ldots,d\}$ the fiber over $S_{\eta}$ is a compact subset of $(\widetilde{\CC^*})^d$ and the integrand of $H_{\varUpsilon(\epsilon)}(\beta;y)$ is holomorphic over it, so there is nothing to prove. Let us assume for simplicity that $\eta=\{1,\ldots , s\}$ with $1\leq s< d$.
On the stratum $\mathcal{S}_{\eta}$ we have $r_1=\cdots =r_s=\rho <\epsilon$. Thus, if we imitate the proof of Lemma \ref{sufficientplus} (recall that
$\tau_A =\sigma\cup \{n\}$ in our case) we get the following upper bound for $\Re M(t,y)$ instead of inequality (\ref{Re-exponent}):
\begin{equation}\label{boundexp}
\Re M(u,y)\leq  \epsilon s - r_{s+1}-\ldots -r_d + |y_n| \epsilon^{|a|}+ K  (\epsilon s + \sum_{k=s+1}^d r_k)^{\kappa}
\end{equation}
Since $r_{s+1}+\cdots+r_d \leq \Vert u\Vert_1 =\rho s + r_{s+1}+\cdots+r_d \leq \epsilon s + r_{s+1}+\cdots+r_d$ for $u\in\varUpsilon(\epsilon)_\eta$, we still get an inequality of type (\ref{dominant-exponent}). There are constants $C_\eta, c_\eta>0$ (depending also on $y$) such that $\Re M (y,u)\leq C_\eta-c_\eta \Vert u \Vert_1$ for all $u\in \varUpsilon(\epsilon)_\eta$.

Since $\vert u^{-\beta-\nn}\vert\leq \Vert u\Vert_1^{{-|\Re\beta|-d}}$, the convergence of the integral  $H_{\varUpsilon(\epsilon)}(\beta;y)$, follows from the fact that on the part
$\cS_\eta$ of the cycle, the function under the integral is dominated by:
\[
\Vert u\Vert_1^{-|\Re\beta|-d}e^{C_\eta-c_\eta \Vert u \Vert_1}
\]
and for any $R>0$, there is a compact $K_R\subset \varUpsilon(\epsilon)$ such that for $u\in \varUpsilon(\epsilon)\setminus K_R$, one has $\Vert u \Vert_1>R$.

A closer look at the argument which proves \eqref{boundexp} shows that we can write the following upper bound for $|\Im M(u,y)|$:
\[
|\Im M(u,y)|\leq d\epsilon+|y_n|\epsilon^{|a(n)|}+K (d\epsilon +\Vert u\Vert_1)^\kappa.
\]
This upper bound, the relation \eqref{boundexp} and the fact that $0<\kappa<1$ prove that $\Im M(u,y)/\Re M (u,y)$ tends to zero as $\Vert u \Vert_1$ tends to infinity. Thus, for any $\delta \in]0,\frac{\pi}2[$ and outside a compact set  $K_\delta$, any $u\in \varUpsilon(\epsilon)$ satisfies
\begin{equation}\label{rapid-decay-arg}
\arg M(u,y)\in ]\pi-\delta,\pi+\delta[.
\end{equation}
In particular the argument of $M(u,y)$ tends to $\pi$.
Therefore if we use a compactification $X$ of $(\CC^*)^d$, a real blow--up $\pi : \wt{X}\too X$ of $X$ along $D$, and apply Corollary \ref{Hiensufficient2}, we obtain that $\varUpsilon(\epsilon)$ is a rapid decay cycle.

Let us prove that when $\Re \beta_k<0$ for all $k$ the integral $H_{\varUpsilon(\epsilon)}(\beta;y)$ tends, when $\epsilon\to 0$, to the integral \eqref{6} multiplied by the obvious factor $$\sum_{\xi \in\{0,1\}^d}(-1)^{d-|\xi|}\exp( 2\sqrt{-1}\,\pi\sum\beta_kq_k\xi_k)=\prod_{k=1}^d (\exp(2\sqrt{-1}\,\pi q_k\beta_k)-1).$$  Since \eqref{6} is clearly the limit of the piece of the integral $H_{\varUpsilon(\epsilon)}(\beta;y)$ over $\varUpsilon(\epsilon)_\emptyset$,
it suffices to show that the integrals over $\mathcal{S}_\eta$ for $\eta\ne \emptyset$ tend to zero.
Let us assume again for simplicity that $\eta=\{1,\ldots , s\}$ with $1\leq s\leq d$.
On each piece of $\varUpsilon(\epsilon)_\eta$ the parameters are
$$(\theta_1,\ldots,\theta_s,r_{s+1},\ldots,r_d)\in \prod_{k\in\eta}[0,2q_k\pi]\times U_{\eta}.$$
and the change of variables from the parametrization \eqref{param} induces in the different factors of the integrand the following results:
\[
\bigwedge_{k=1}^d\frac {du_k}{u_k}=(\sqrt{-1}\,d\theta_1)\wedge \cdots \wedge (\sqrt{-1}\,d\theta_s)\wedge\frac {dr_{s+1}}{r_{s+1}}\wedge \cdots \wedge\frac {dr_d}{r_d},
\]
\[
u^{-\beta}= \rho^{-\beta_1-\cdots-\beta_s}r_{s+1}^{-\beta_{s+1}}\cdots r_d^{-\beta_d} \exp\left(\sqrt{-1}\,\left(-\sum_{j=1}^s\beta_j\theta_j-\sum_{k=s+1}^d 2\pi\beta_kq_k\xi_k\right)\right),
\]
\begin{align*}
|u^{-\beta}|=&\rho^{-\Re(\beta_1+\cdots+\beta_s)}\prod_{\ell=s+1}^dr_\ell^{-\Re\beta_\ell}\exp\left(\sum_{j=1}^s\Im\beta_j\theta_j+\sum_{k=s+1}^d 2\pi\Im\beta_kq_k\xi_k\right)\\
\leq & \,\epsilon^{-\Re(\beta_1+\cdots+\beta_s)}\prod_{\ell=s+1}^dr_\ell^{-\Re\beta_\ell}\exp\left(\sum_{k=1}^d 2\pi|\Im\beta_k|q_k\right).
\end{align*}
From these inequalities and the fact that the real part of the exponent is bounded from above by
\[
C_\eta-c_\eta(r_{s+1}+\dots+r_d)
\] with $C_\eta,c_\eta \in \RR_{>0}$ independent of $\epsilon$, for $\epsilon\in ]0,\epsilon_0]$, we see that the integral over $\varUpsilon(\epsilon)_\eta$ tends to zero when $\epsilon\to 0$ as expected, because $-\Re(\beta_1+\cdots+\beta_s)>0$.

Finally let us prove that the integral $H_{\varUpsilon(\epsilon)}$ does not depend on $\epsilon$: Take $0<\epsilon_1<\epsilon_2$. We consider $\varUpsilon([\epsilon_1,\epsilon_2])$, the non compact $(d+1)$-cycle $$\bigcup_{\epsilon\in [\epsilon_1,\epsilon_2]}\{\epsilon\}\times\varUpsilon(\epsilon)$$
with oriented boundary $\{\epsilon_1\}\times\varUpsilon(\epsilon_1)-\{\epsilon_2\}\times\varUpsilon(\epsilon_2)$. Consider then for $R>\epsilon_2$ the compact cycle $\varUpsilon_R=\varUpsilon([\epsilon_1,\epsilon_2])\cap ([\epsilon_1,\epsilon_2] \times P_R)$ where $P_R$ is the polydisk
\[
P_R=\{u\in \CC^d \mid \vert u_1\vert\leq R, \dots, \vert u_d\vert \leq R\}.
\]
Integrals $H_{\varUpsilon(\epsilon)}$ are of the form $H_{\varUpsilon(\epsilon)}=\int_{\varUpsilon(\epsilon)}\omega$, where $\omega$ is a holomorphic form of degree $d$ independent of $\epsilon$ and hence it is a closed form.
We have
\[
0=\int_{\varUpsilon_R}d\omega=\int_{\p\varUpsilon_R}\omega.
\]
The boundary $\p\varUpsilon_R$ is equal to $$(\{\epsilon_1\}\times\varUpsilon(\epsilon_1))\cap ([\epsilon_1,\epsilon_2]\times P_R)-(\{\epsilon_2\}\times\varUpsilon(\epsilon_2))\cap ([\epsilon_1,\epsilon_2]\times P_R)+\p_R. $$ Since by examining the parametrization \eqref{param} we see that each $d$-dimensional piece of $\p_R$ is included in an hyperplane $u_j=R$, hence the restriction to it of $\omega$ is zero. We deduce that the integral of $\omega$ on $\varUpsilon(\epsilon_j)\cap P_R$ (which can replace $(\{\epsilon_j\}\times \varUpsilon(\epsilon_j))\cap([\epsilon_1,\epsilon_2]\times P_R)$ because $\omega$ does not depend on $\epsilon$) for $j=1,2$ are equal. Taking the limit when $R\too \infty$ we obtain the result
$$H_{\varUpsilon(\epsilon_1)}=H_{\varUpsilon(\epsilon_2).}$$

In the case of general $p,\delta$, we keep the same cycle and work with the integral
\[
H_{\varUpsilon(\epsilon)}(\beta;y):= \int_{\varUpsilon(\epsilon)} u^{-\beta-{\bf 1}} \exp{{\left(-\sum_{k=1}^d e^{\sqrt{-1}\,\pi\delta_k}u_k + \sum_{j=d+1}^n z_{j}u^{a(j)}\right)}}du
\] where $z_j=e^{\sqrt{-1}\,\pi\ideal{\nn+2p+\delta,\, a(j)}}y_{j}$ and the proof is essentially the same with only an easy modification of inequality \eqref{boundexp}.

In particular, the cycle $\widetilde{D}_{p,\delta}$ in the statement of Theorem \ref{limit-rapid-decay}, is the image of $\varUpsilon(\epsilon)$ by $t_k=u_k \cdot \exp(\sqrt{-1}\,\pi\langle{\bf 1}+2p+\delta,\, a(k)\rangle)$.
\end{proof}

\noindent {\bf Conclusion:} The integral $H_{\varUpsilon(\epsilon)}(\beta; y)$ is analytic as a function of
$\beta \in \CC^d$.

Reintroducing the constant $e^{-\sqrt{-1}\,\pi\ideal{\nn+2p+\delta,\,\beta}}$ we see that $e^{-\sqrt{-1}\,\pi\ideal{\nn+2p+\delta,\,\beta}}H_{\varUpsilon(\epsilon)}(\beta; y)$ is equal, when $\Re \beta_k<0$ for all $k$, to $$\prod_{k=1}^d (\exp{(2\sqrt{-1}\,\pi q_k\beta_k)}-1)F_{p,\delta}(\beta;y)$$ hence to its meromorphic continuation $$\prod_{k=1}^d (\exp{(2\sqrt{-1}\,\pi q_k\beta_k)}-1)\wt{F}_{p,\delta}(\beta;y)$$ outside the union of hyperplanes $\mathcal P$ described in Lemma \ref{meromorphic-continuation}.

When $q_k\beta_k\notin \ZZ$ for all $k\in \{1,\dots,d\}$, the factor $\prod_{k=1}^d (\exp{(2\sqrt{-1}\,\pi q_k\beta_k)}-1)$ is non zero and we obtain Gevrey series expansion for the integral along rapid decay cycles $H_{\varUpsilon(\epsilon)}(\beta;y)$.
To check this last claim we have to remark that the set of poles of the analytic continuation $\wt{F}_{p,\delta}(\beta; y)$ is contained in $\cP$ which is itself contained in the set defined by $\prod_{k=1}^d (\exp{(2\sqrt{-1}\,\pi q_k \beta_k)}-1)= 0$. This latter set is, under Assumption \ref{assumption}, the set of parameters $\beta=B_{\sigma}^{-1}\gamma$ such  that $\gamma$ is called \emph{resonant} for $B$ (see \cite[2.9]{GKZ90}).

Coming back to the general situation of Theorem \ref{Asymptotic-expansions-theorem}, the result of this theorem and the above considerations prove the following theorem :

\begin{theorem}\label{conclusion-theorem}
If Assumption \ref{assumption} is satisfied and $\gamma \in \CC^d$ is non resonant for $B$, then all the
Gevrey solutions of $M_B(\gamma)$ along the hyperplane $x_n=0$ can be described as linear combinations of
a fixed set of asymptotic expansions of integral solutions of type $I_C(\gamma,x)$ along rapid decay cycles.
\end{theorem}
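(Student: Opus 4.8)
The plan is to combine Theorem~\ref{Asymptotic-expansions-theorem} with Theorem~\ref{limit-rapid-decay} and the analyticity observation of the Conclusion above; the key point is that non-resonance of $\gamma$ is exactly what forces all the scalar factors occurring in these identifications to be nonzero, so that the Borel--Moore integrals $I_{C_{p,\delta}}(\gamma;x)$ may be traded for integrals along rapid decay cycles without changing the span of their asymptotic expansions.

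First I would recall that, by Theorem~\ref{Asymptotic-expansions-theorem} and the remarks following its proof, whenever $\gamma$ is not rank--jumping and $\beta:=B_\sigma^{-1}\gamma\notin\mathcal{P}$, the family $x_\sigma^{B_\sigma^{-1}\gamma}\cdot\mathcal{G}$ --- equivalently, the asymptotic expansions along $x_n=0$ of the meromorphic continuations of the $[\ZZ^d:\ZZ\,{}^tB_\sigma]$ integrals $I_{C_{p,\delta}}(\gamma;x)$, with $p$ running over a fixed set of representatives of $\ZZ^d/\ZZ\,{}^tB_\sigma$ and $\delta=\delta(p)$ chosen as in Lemma~\ref{remark-delta} --- is a basis of the space of Gevrey solutions of $M_B(\gamma)$ along $x_n=0$. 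It therefore suffices to replace each $C_{p,\delta}$ by a rapid decay cycle carrying the same asymptotic expansion up to a nonzero scalar. For this I would invoke Theorem~\ref{limit-rapid-decay}: in the reduced coordinates, for $\Re\beta<0$ and $q_k\beta_k\notin\ZZ$, one has $e^{-\sqrt{-1}\,\pi\ideal{\nn+2p+\delta,\beta}}H_{\varUpsilon(\epsilon)}(\beta;y)=\prod_{k=1}^d(e^{2\sqrt{-1}\,\pi q_k\beta_k}-1)\,F_{p,\delta}(\beta;y)$, where $\varUpsilon(\epsilon)$ is a rapid decay cycle; after the change of variables of Section~\ref{section-change-coordinates} and the inverse toric substitution it yields a rapid decay cycle $C$ with $I_C(\gamma;x)$ a nonzero-constant multiple of $I_{C_{p,\delta}}(\gamma;x)$.

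Next I would use that $H_{\varUpsilon(\epsilon)}(\beta;y)$ is holomorphic in $\beta$ on all of $\CC^d$: the defining integral is absolutely convergent for every $\beta$, since the rapid decay at infinity established in the proof of Theorem~\ref{limit-rapid-decay} is $\beta$-independent and there is no singularity at the origin along $\varUpsilon(\epsilon)$. Hence, by the identity principle, the displayed equality propagates from the open set $\{\Re\beta<0,\ q_k\beta_k\notin\ZZ\}$ to the connected set $\CC^d\setminus\mathcal{P}$, with $F_{p,\delta}$ replaced there by its meromorphic continuation $\widetilde F_{p,\delta}$. Since $\mathcal{P}$ is contained in the resonant locus, which under Assumption~\ref{assumption} is precisely the zero set of $\prod_{k=1}^d(e^{2\sqrt{-1}\,\pi q_k\beta_k}-1)$, for non-resonant $\gamma$ both that product and the exponential prefactor are nonzero, $\widetilde F_{p,\delta}(\beta;\cdot)$ is holomorphic, and $H_{\varUpsilon(\epsilon)}(\beta;\cdot)$ is a nonzero constant multiple of it. By Theorem~\ref{ASY-tilde} the function $\widetilde F_{p,\delta}$ admits an asymptotic expansion along $y_n=0$, hence so does $H_{\varUpsilon(\epsilon)}$, with coefficients differing by the same nonzero constant; pulling back through $y_j=x_jx_\sigma^{-B_\sigma^{-1}b(j)}$ and multiplying by $x_\sigma^{B_\sigma^{-1}\gamma}$ shows that, for $C$ ranging over the fixed finite family of rapid decay cycles just produced, the asymptotic expansions of the integrals $I_C(\gamma;x)$ span exactly the same space as those of the $I_{C_{p,\delta}}(\gamma;x)$. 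Finally, non-resonance of $\gamma$ also guarantees $\gamma$ is not rank--jumping and $\beta\notin\mathcal{P}$ (rank--jumping parameters are semi--resonant, hence resonant, and $\mathcal{P}$ lies in the resonant locus), so Theorem~\ref{Asymptotic-expansions-theorem} applies in the strengthened form of the remark following its proof, and its conclusion carries over with $I_{C_{p,\delta}}(\gamma;x)$ replaced by $I_C(\gamma;x)$ along rapid decay cycles; in particular the resulting set is a basis.

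The step I expect to be the real work is not a single estimate but the bookkeeping around it: tracking the several nonzero scalar factors (the monomial $x_\sigma^{B_\sigma^{-1}\gamma}$ and the constant $\det B_\sigma^{-1}$ from \eqref{conv-I-to-F}, the prefactor $e^{-\sqrt{-1}\,\pi\ideal{\nn+2p+\delta,\beta}}$, and $\prod_k(e^{2\sqrt{-1}\,\pi q_k\beta_k}-1)$), verifying cleanly that the identity $H_{\varUpsilon(\epsilon)}=\mathrm{const}\cdot\widetilde F_{p,\delta}$ --- established only on $\{\Re\beta<0,\ q_k\beta_k\notin\ZZ\}$ in Theorem~\ref{limit-rapid-decay} --- does propagate by analytic continuation to the whole non-resonant locus, and matching the indexing of the rapid decay cycles with the set of representatives of $\ZZ^d/\ZZ\,{}^tB_\sigma$ used in Theorem~\ref{Asymptotic-expansions-theorem} so that the count $|\det B_\sigma|$ is preserved.
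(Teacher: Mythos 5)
Your proposal is correct and follows essentially the same route as the paper's own argument: it combines Theorem~\ref{Asymptotic-expansions-theorem} (in the strengthened form of the remark following its proof) with Theorem~\ref{limit-rapid-decay}, the entireness of $H_{\varUpsilon(\epsilon)}(\beta;y)$ in $\beta$, and the chain $\mathcal{P}\subseteq\{\prod_k(e^{2\sqrt{-1}\,\pi q_k\beta_k}-1)=0\}=$ resonant locus, so that non-resonance makes all scalar factors nonzero and allows the identity $e^{-\sqrt{-1}\,\pi\ideal{\nn+2p+\delta,\beta}}H_{\varUpsilon(\epsilon)}=\prod_k(e^{2\sqrt{-1}\,\pi q_k\beta_k}-1)\widetilde F_{p,\delta}$ to propagate by analytic continuation. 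Your explicit verification that non-resonance also rules out rank--jumping parameters (via semi-resonance) is exactly the point the paper leaves implicit, so no gap remains.
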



\begin{thebibliography}{XXXXX}\addcontentsline{toc}{chapter}{Bibliography}

\bibitem[A94]{Adolphson} Adolphson, A. \emph{A-hypergeometric functions and rings generated by
monomials}. Duke Math. J. 73 (1994), no. 2,  269--290.

\bibitem[AK11]{AoK}Aomoto, K. and Kita, M. \emph{Theory of hypergeometric functions},
Springer Monographs in Mathematics, 2011.

\bibitem[CFKT15]{CFKT} Castro-Jim\'enez, F.-J., Fern\'andez-Fern\'andez, M.-C., Koike, T. and Takayama, N. \emph{Irregular modified A-hypergeometric systems}. Trans. Amer. Math. Soc. 367 (2015), no. 8, 5415--5445.

\bibitem[CG15]{Castro-Granger-IMRN} Castro-Jim\'enez, F.J. and Granger, M.
\emph{Gevrey expansions of hypergeometric integrals I}. International Mathematics Research Notices, 2015 (5), 1338-1370.

\bibitem[ET15]{Esterov-Takeuchi-2015} Esterov, A. and Takeuchi, K.
\emph{Confluent A-hypergeometric functions and rapid decay homology cycles.}
Amer. J. Math. 137 (2015), no. 2, 365--409.

\bibitem[F10]{F} Fern\'andez-Fern\'andez, M.C.
\emph{Irregular hypergeometric $\mathcal{D}$-modules}. Adv. Math. 224 (2010) 1735--1764.

\bibitem[FC11a]{FC2} Fern\'andez-Fern\'andez, M.C.  and Castro-Jim\'enez, F.J.
\emph{Gevrey solutions of irregular hypergeometric systems in two variables}. J. of Algebra 339 (2011), 320--335.

\bibitem[FC11b]{FC1} Fern\'andez-Fern\'andez, M.C. and Castro-Jim\'enez, F.J.
\emph{Gevrey solutions of the irregular hypergeometric system associated with an affine monomial curve}. Trans. Amer. Math. Soc.
363  (2011), 923--948.

\bibitem[GGZ87]{GGZ87} Gelfand, I.M.,  Graev, M.I.  and Zelevinsky, A.V.
\emph{Holonomic systems of equations ans series of hypergeometric
type}.  Dokl. Akad. Nauk SSSR  295  (1987),  no. 1, 14--19;
translation in Soviet Math. Dokl. 36 (1988), no. 1, 5--10.

\bibitem[GZK89]{GZK89} Gelfand, I.M., Zelevinsky, A.V.  and  Kapranov, M.M.,
\emph{Hypergeometric functions and toric varieties (or
Hypergeometric functions  and toral manifolds)}. Translated from
Funktsional. Anal. i Prilozhen. 23 (1989), no. 2, 12--26;
translation in Funct. Anal. Appl. 23 (1989), no. 2, 94--106; and
I.M. Gelfand, A.V.  Zelevinski\u\i and M.M.  Kapranov, Correction to
the paper: "Hypergeometric functions and toric varieties"
[Funktsional. Anal. i Prilozhen. 23 (1989), no. 2, 12--26];
(Russian) Funktsional. Anal. i Prilozhen. 27 (1993), no. 4, 91;
translation in Funct. Anal. Appl. 27 (1993), no. 4, 295 (1994).

\bibitem[GKZ90]{GKZ90} Gelfand, I.M., Kapranov, M.M. and Zelevinsky, A.V. \emph{Generalized Euler Integrals and A-hypergeometric Functions.} Adv. Math. 84 (1990) 255--271.

\bibitem[GG99]{GG99} Gelfand, I.M., Graev, M.I.
\emph{GG Functions and their Relations to General Hypergeometric Functions.} Letters in Mathematical Physics 50 (1999) 1--28.

\bibitem[Hi07]{Hien07} Hien, M. \emph{Periods for irregular singular connections on surfaces}. Math. Ann. (2007) 337, 631--669.

\bibitem[Hi09]{Hien09} Hien, M. \emph{Periods for flat algebraic connections}. Invent. Math. (2009) 178, 1--22.

\bibitem[Ho98]{Hotta} Hotta, R. Equivariant D-modules. {\tt arXiv:math/9805021v1
[math.RT]}.


\bibitem[LM99]{Laurent-Mebkhout} Laurent, Y. and Mebkhout, Z. \emph{Pentes
alg\'ebriques et pentes analytiques d'un $\cD$-module}. Ann. Sci. {\'E}cole Norm. Sup. (4) 32 (1999), no. 1, 39--69.

\bibitem[MH18]{Matsubara-Heo} Matsubara-Heo, S.-J. \emph{On Mellin-Barnes integral representations for GKZ hypergeometric functions}. arXiv:1802.04939 [math.CA]

\bibitem[MH18b]{M-H18b} Matsubara-Heo, S.-J. \emph{Laplace, Residue, and Euler integral representations of GKZ hypergeometric functions}. arxiv:1801.04075v2 [math.CA]


\bibitem[MH19]{M-H19} Matsubara-Heo, S.-J. \emph{Euler and Laplace integral representations of GKZ hypergeometric functions}. 	arXiv:1904.00565 [math.CA]

\bibitem[MMW05]{MMW} Matusevich, L.F., Miller, E. and Walther, U.
\emph{Homological methods for hypergeometric families},
J. Amer. Math. Soc. \textbf{18} (2005), no.~4, 919--941.

\bibitem[Me90]{Mebkhout-positivite} Mebkhout, Z. \emph{Le th\'eor\`eme de positivit\'e de l'irr\'egularit\'e pour les $D_X$-modules}. The Grothendieck Festschrift, Progress in Math., vol. 88, no. 3, Birkhauser (1990) p. 83--131.

\bibitem[OT09]{Ohara-Takayama} Ohara, K. and Takayama, N. \emph{Holonomic rank of A-hypergeometric differential-difference equations}. J. Pure Appl. Algebra 213 (2009), no. 8, 1536--1544.

\bibitem[Sab13]{Sabbah} Sabbah, C. \emph{Introduction to {S}tokes structures}. Lecture Notes in Mathematics, Vol 2060, Springer, Heidelberg, 2013, pxiv+249.


\bibitem[SST]{SST}
Saito, M., Sturmfels, B. and Takayama, N. \emph{Gr\"obner
{D}eformations of {H}ypergeometric {D}ifferential {E}quations},
Springer--Verlag, Berlin, 2000.

\bibitem[SW08]{schulze-walther} Schulze, M. and  Walther, U.
\emph{Irregularity of hypergeometric systems via slopes along
coordinate subspaces}.  Duke Math. J.  142  (2008),  no. 3,
465--509.

\bibitem[T09]{T09} Takayama, N. \emph{Modified $\cA$-Hypergeometric Systems}. Kyushu J. Math. 63 (2009), 113--122.

\end{thebibliography}
\end{document}